  \newtheorem{theorem}{Th\'{e}or\`{e}me}
\newtheorem{proposition}{Proposition}
\newtheorem{lemma}{Lemme}
\newtheorem{definition}{D\'{e}finition}
\newtheorem{proof}{Preuve}
\begin{document}
\title{Réalisation de de Rham des motifs de Voevodsky}
\author{Alexis Bouthier}
\date{12 prairial an 217 (01/06/09)}

\maketitle
$\\$$\\$$\\$$\\$$\\$$\\$$\\$$\\$$\\$$\\$$\\$$\\$$\\$$\\$$\\$$\\$$\\$$\\$$\\$$\\$$\\$$\\$$\\$$\\$$\\$$\\$$\\$$\\$$\\$$\\$$\\$$\\$$\\$$\\$$\\$$\\$$\\$$\\$$\\$$\\$
$\\$$\\$$\\$$\\$$\\$$\\$$\\$$\\$$\\$$\\$$\\$$\\$$\\$$\\$$\\$$\\$$\\$$\\$$\\$$\\$$\\$$\\$$\\$$\\$$\\$
\tableofcontents{}
\section{Introduction}

Il y a 40 ans, Grothendieck introduisait la th\'{e}orie des motifs purs, avec pour ambition  d'une part de r\'{e}aliser une th\'{e}orie cohomologique universelle qui d\'{e}crit la cohomologie des vari\'{e}t\'{e}s projectives lisses, dont les diff\'{e}rentes incarnations seraient les cohomologies de Weil qui sont \`{a} notre disposition: Betti, l-adique, rigide, de Rham, appel\'{e}s foncteur de r\'{e}alisation. D'autre part, il s'agissait d'\'{e}laborer une th\'{e}orie de Galois pour les nombres transcendants.

Cette th\'{e}orie d\'{e}bute sans doute avec le formalisme tannakien d\'{e}velopp\'{e} dans Saavedra [24], suivant les id\'{e}es de Grothendieck, o\`{u} \`{a} la fin est \'{e}nonc\'{e} tout un faisceau de conjectures sur les motifs dits purs. Cette th\'{e}orie est un analogue lin\'{e}aire de la th\'{e}orie profinie du $\pi_{1}$ d\'{e}velopp\'{e}e dans SGA 1. Un des r\'{e}sultats principaux est qu'une cat\'{e}gorie tensorielle k-lin\'{e}aire, muni d'un foncteur fibre neutre ([1,10,24] pour les d\'{e}finitions), dite tannakienne, est \'{e}quivalente \`{a} la cat\'{e}gorie de repr\'{e}sentations d'un groupe proalg\'{e}brique. La cat\'{e}gorie des motifs purs devrait former une telle cat\'{e}gorie et le groupe tannakien qui lui est canoniquement associ\'{e} serait le groupe de Galois motivique. Malheureusement, une telle cat\'{e}gorie n'est pas disponible pour le moment, principalement \`{a} cause des conjectures standards qui restent tr\`{e}s largement ouvertes depuis plus de 40 ans.

Dans les ann\'{e}es 80, Deligne et Beilinson \'{e}laborent un vaste programme pour construire la cat\'{e}gorie des  motifs mixtes qui eux, sont cens\'{e}s d\'{e}crire la cohomologie de toutes les vari\'{e}t\'{e}s. Deligne  propose de construire  une cat\'{e}gorie $DM_{gm}(k)$, la cat\'{e}gorie d\'{e}riv\'{e}e des motifs mixtes,  munie d'une t-structure naturelle o\`{u} l'on retrouverait les motifs mixtes en prenant le coeur de cette t-structure. Cette cat\'{e}gorie serait également munie de foncteurs de r\'{e}alisation, issus des diff\'{e}rentes th\'{e}ories cohomologiques. Construire la cat\'{e}gorie d\'{e}riv\'{e}e semble plus facile dans la mesure o\`{u} l'on regarde simultan\'{e}ment tous les groupes de cohomologie et donc on peut se dispenser des probl\`{e}mes li\'{e}s \`{a} l'alg\'{e}bricit\'{e} des projecteurs de Künneth. Une telle entreprise a \'{e}t\'{e} men\'{e}e \`{a} bien par  Voevodsky ([32]), Levine ([20]) et Hanamura qui ont propos\'{e} chacun une construction d'un cat\'{e}gorie avec des bonnes prori\'{e}t\'{e}s.

Il fallait maintenant munir chacune de ces constructions de r\'{e}alisations. Levine les a construit pour sa cat\'{e}gorie de motifs mixtes et donc, par comparaison avec la caract\'{e}ristique nulle avec la construction de Voevodsky, on dispose indirectement de tels foncteurs. N\'{e}anmoins, il apparaît pr\'{e}f\'{e}rable de ne pas passer par ce d\'{e}tour et d'obtenir les transferts directement sur les complexes concernés.

Huber, dans [14], construit un foncteur de r\'{e}alisation, en passant par une cat\'{e}gorie interm\'{e}diaire appel\'{e}e cat\'{e}gorie des syt\`{e}mes de r\'{e}alisations, valable pour tout corps k, qui nous fournit les r\'{e}alisations de Betti, l-adique et de Hodge (le cas rigide n'est pas trait\'{e}). N\'{e}anmoins, Deligne-Goncharov [10] signalent qu'en ce qui concerne la r\'{e}alisation de Hodge, la litt\'{e}rature n'est pas satisfaisante \`{a} ce sujet et \'{e}bauchent une construction pour l'obtention d'un tel foncteur.

Dans ce m\'{e}moire, il s'agit de construire un foncteur de r\'{e}alisation de de Rham pour les motifs mixtes g\'{e}om\'{e}triques de Voevodsky.  La m\'{e}thode pr\'{e}sent\'{e}e ici est celle esquiss\'{e}e dans Deligne-Goncharov. Elle  consiste \`{a} mettre des transferts sur le complexe de de Rham logarithmique  et une fois verifi\'{e}e l'invariance homotopique et la suite de Mayer-Vietoris pour la cohomologie de de Rham ainsi que quelques autres hypoth\`{e}ses mineures, on pourra conclure. Ce travail se d\'{e}compose de la façon suivante, premi\`{e}rement les propri\'{e}t\'{e}s et les conjectures qui existent au niveau des motifs purs pour motiver la construction; deuxi\`{e}mement, l'introduction de la cat\'{e}gorie des correspondances finies et des motifs mixtes g\'{e}om\'{e}triques. Ensuite, nous aurons besoin de r\'{e}sultats techniques sur la localisation des correspondances finies afin de mettre des transferts sur la r\'{e}solution de Godement d'un faisceau. Une fois ceci fait, il ne reste plus qu'\`{a} mettre des transferts sur la cohomologie de de Rham, cela s'\'{e}tendra alors aux r\'{e}solutions flasques canoniques et le reste ensuite ne sera plus que formel.

Il y a \`{a} noter que derni\`{e}rement, les travaux de Lecomte-Wach [19 bis] et de Cisinski-D\'{e}glise [5]  proposent une d\'{e}finition alternative d'un tel foncteur, mais dans les deux cas, pour le moment ils ne disposent pas des poids pour le foncteur de r\'{e}alisation.
 N\'{e}anmoins, en ce qui concerne Cisinski-D\'{e}glise, leur formalisme de cohomologie de Weil mixte leur permet en revanche d'obtenir une r\'{e}alisation rigide, mais \`{a} nouveau se pose le probl\`{e}me de la  bonne cat\'{e}gorie de coefficients (en particulier, le foncteur n'arrive pas dans la cat\'{e}gorie d\'{e}riv\'{e}e des F-isocristaux).

Je voudrais remercier M.Levine, qui a dirig\'{e} ce m\'{e}moire et P.Gille qui a bien accept\'{e} d'être mon directeur en France. Je remercie \'{e}galement, F.Lecomte, J.Riou et C.Soul\'{e} pour avoir accepté de participer à mon jury. Ce travail a \'{e}t\'{e} r\'{e}alis\'{e} durant mon s\'{e}jour \`{a} l'Universit\'{e} de Northeastern, Boston,  \`{a} laquelle je tiens \`{a} exprimer ma sinc\`{e}re gratitude. Enfin, merci \`{a} Franck Gabriel, mon compagnon d'infortune.

\section{Motifs purs}
Dans cette section, on rappelle un certain nombre de propri\'{e}t\'{e}s des motifs purs qui justifient en partie l'\'{e}tude des r\'{e}alisations pour les motifs mixtes. L'autre partie, consistant en l'\'{e}tude des r\'{e}gulateurs, sera  abord\'{e}e plus tard.
\subsection{Relations d'\'{e}quivalence}
On note $\mathcal{P}(k)$ la cat\'{e}gorie des sch\'{e}mas projectifs lisses sur k, un corps.
Dans le cas d'une intersection transverse, on sait d\'{e}finir le produit d'intersection de deux cycles alg\'{e}briques. La notion d'\'{e}quivalence ad\'{e}quate nous permet d'\'{e}tendre \`{a} tous les cycles ce produit d'intersection.
Pour tout anneau commutatif R, on pose $\mathcal{Z}^{r}(X)_{R}:=\mathcal{Z}^{r}(X)\otimes R$.

\begin{definition}
Une relation d'\'{e}quivalence $\approx$ sur les cycles alg\'{e}briques est dite ad\'{e}quate si elle v\'{e}rifie les axiomes suivants pour X,Y dans $\mathcal{P}(k)$:\\
(1) $\approx{}$ est compatible \`{a} la structure R-lin\'{e}aire et \`{a} la graduation sur les cycles.\\
(2) pour tous $\alpha$, $\beta \in \mathcal{Z}^{\bullet}(X) _{R}$, il existe $\alpha'\approx\alpha$ tel que $\alpha'$ et $\beta$ se coupent proprement.\\
(3) pour tout $\alpha$ dans $\mathcal{Z}^{\bullet}(X) _{R}$ et tout $\gamma \in \mathcal{Z}^{\bullet}(X\times Y) _{R}$ coupant proprement $pr_{1}^{*}(\alpha)$, on a:\\ $\alpha\approx 0\Rightarrow\gamma_{*}(\alpha):=pr_{2*}(\gamma.pr_{1}^{*}(\alpha))\approx 0$.
\end{definition}
On note $\mathcal{Z}^{\bullet}_{\approx}(X)_{R}:=\mathcal{Z}^{\bullet}(X)_{R}/\approx$ qui en fait une R-alg\`{e}bre gradu\'{e}e commutative.\\
Les \'{e}l\'{e}ments de $\mathcal{Z}^{\text{dim} X+r}_{\approx}(X\times Y)_{R}$ sont appel\'{e}s les correspondances de degr\'{e} r. La formule
\begin{center}
$g\circ f=p_{13*}(p_{12}^{*}(f).p_{23}^{*}(g))$
\end{center}
 fournit une loi de composition asociative (cf Fulton) pour les correspondances modulo $\approx$:\\
\begin{center}
$\mathcal{Z}^{\text{dim} X+r}_{\approx}(X\times Y)_{R}\otimes \mathcal{Z}^{\text{dim} Y+s}_{\approx}(Y\times Z)_{R}\rightarrow \mathcal{Z}^{\text{dim} X+r+s}_{\approx}(X\times Z)_{R}$
\end{center}

On a alors en particulier que $\mathcal{Z}^{\text{dim} X}_{\approx}(X\times X)_{R}$ est une alg\`{e}bre en g\'{e}n\'{e}ral non commutative. L'unit\'{e} est la classe de la diagonale et elle est munie d'une anti-involution donn\'{e}e par la transpos\'{e}e.\\

Exemples de relations d'\'{e}quivalence:\\
-\'{e}quivalence rationnelle: Un cycle $\alpha$ est dit rationnellement \`{a} 0 si il existe $\beta\in\mathcal{Z}^{\bullet}_{\approx}(X\times~\mathbb{P}^{1})_{R}$ tel que $\beta(0)$ et $\beta(\infty)$ et que $\alpha=\beta(0)-\beta(\infty)$. Ils donnent classiquement les groupes de Chow.\\
-\'{e}quivalence num\'{e}rique: Soit $\alpha\in\mathcal{Z}^{r}_{\approx}(X)_{R}$ on a: $\alpha\approx_{num} 0\Leftrightarrow \forall \beta\in\mathcal{Z}^{r}_{\approx}(X)_{R}$, $ deg(\alpha.\beta)=~0$.\\
-\'{e}quivalence homologique, (cf ci-dessous).
On va rappeler d'abord l'axiomatique des cohomologies de Weil.

\begin{definition} Soit F un corps de coefficients de caract\'{e}ristique nulle.
Une cohomologie de Weil est un foncteur $H:\mathcal{P}(k)^{op}\rightarrow VecGr_{F}$ avec les propri\'{e}t\'{e}s et donn\'{e}es additionnelles suivantes:\\

(1) H est un $\otimes$-foncteur, nous avons une formule de Künneth:\\
$H(X)\otimes H(Y)=H(X\times Y)$ pour X,Y dans $\mathcal{P}(k)$.$\\$$\\$
(2) H(X) est en degr\'{e}s positifs.$\\$$\\$
(3) $\forall X,Y \in \mathcal{P}(k)$, on a un isomorphisme canonique $H(X\coprod Y)= H(X)\oplus H(Y)$.
$\\$$\\$

(4) Le F-espace vectoriel $H^{2}(\mathbb{P}^{1})$ est de dimension 1, son dual est not\'{e} $F(1)$. Pour tout F-espace vectoriel V, n un entier, on pose $V(n)=V\otimes F^{\otimes n}$.$\\$$\\$

(5) $\forall X\in\mathcal{P}(k)$ de dimension d, il existe une application trace multiplicative $Tr:~H^{2d}(X)(d)\rightarrow~ F$ qui induit un accouplement parfait de Poincaré:
\begin{center}
$H^{2d-i}(X)(d)\times H^{i}(X)\rightarrow H^{2d}(X)(d)\rightarrow F$.
\end{center}
$\\$$\\$

(6) Il existe un morphisme classe de cycles $cl:CH^{\bullet}(X)\rightarrow H^{2*}(X)(*)$ contravariant en $X\in\mathcal{P}(k)$, compatible aux produits et normalis\'{e} par la trace de telle façon que l'application sur les z\'{e}ros-cycles est donn\'{e} par le degr\'{e}.$\\$$\\$
(7) $cl([\infty])$ est donn\'{e} par $H^{2}(\mathbb{P}^{1})(1)$.

$\\$
\end{definition}
\begin{definition}
Un cycle $x\in CH^{d}(X)_{F}$ est homologiquement \'{e}quivalent \`{a} z\'{e}ro (selon une cohomologie de Weil fix\'{e}e H) si $cl([x])=0$. Cela d\'{e}finit une \'{e}quivalence ad\'{e}quate.

\end{definition}

Exemples:
- Cohomologie de Weil classiques:
\begin{center}

(1) Pour tout premier l inversible dans k et $\bar{k}$ une cloture s\'{e}parable de k, la cohomologie \'{e}tale $H^{\bullet}_{et}(X_{\bar{k}},\mathbb{Q}_{l})$ est une cohomologie de Weil \`{a} coefficients dans $\mathbb{Q}_{l}$.
\end{center}
$\\$
\begin{center}
(2) Si k est de caract\'{e}ristique nulle, la cohomologie de de Rham alg\'{e}brique $H^{\bullet}_{dR}(X/k)$ est une cohomologie de Weil \`{a} coefficients dans k.
\end{center}
$\\$
\begin{center}

(3) Si $\sigma:k\rightarrow \mathbb{C}$ est un plongement complexe, la cohomologie de Betti $H^{\bullet}_{B}(X,\mathbb{Q})$ est une cohomologie de Weil \`{a} coefficients dans $\mathbb{Q}$.
\end{center}
$\\$
\begin{center}
(4) Si k est de caract\'{e}ristique p, la cohomologie cristalline $H^{\bullet}_{cris}(X)$ est une cohomologie de Weil à coefficients dans $W(k)[\frac{1}{p}]$.
\end{center}
$\\$
\begin{lemma}
$\approx_{rat}$ est l'\'{e}quivalence ad\'{e}quate la plus fine et $\approx_{num}$ la plus grossi\`{e}re.
\end{lemma}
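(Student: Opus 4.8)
The statement comprises two comparisons: that $\alpha \approx_{rat} 0$ implies $\alpha \approx 0$ for every adequate $\approx$ (rational equivalence is the finest), and conversely that $\alpha \approx 0$ implies $\alpha \approx_{num} 0$ for every nontrivial adequate $\approx$ (numerical equivalence is the coarsest). The plan is to treat the two inclusions separately, in each case reducing an arbitrary cycle to a universal situation --- on $\mathbb{P}^1$ for the first, on a point for the second --- and then invoking axiom (3) to transport the vanishing across a correspondence.

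For the first inclusion I would start from the definition of rational equivalence itself: if $\alpha \approx_{rat} 0$ on $X$, write $\alpha = \beta(0) - \beta(\infty)$ for some $\beta \in \mathcal{Z}^{\bullet}(X\times\mathbb{P}^1)_R$. Reading $\beta$ as a finite correspondence from $\mathbb{P}^1$ to $X$, one has $\beta(t) = \beta_*([t])$, so that $\alpha = \beta_*([0]-[\infty])$. The proper-intersection hypothesis required by axiom (3) is precisely the condition that $\beta(0)$ and $\beta(\infty)$ be defined, which is built into the definition of $\approx_{rat}$; hence axiom (3) shows that $\beta_*$ carries $\approx$-trivial cycles to $\approx$-trivial cycles. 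The whole first inclusion thus reduces to the single assertion that $[0] \approx [\infty]$ in $\mathcal{Z}^{\bullet}_{\approx}(\mathbb{P}^1)_R$, i.e. that no adequate equivalence separates the two fibres of $\mathbb{P}^1$.

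This sub-lemma is the main obstacle and the only genuinely non-formal point: one cannot merely push a visibly trivial cycle forward, since a priori no nonzero cycle of $\mathbb{P}^1$ is known to be $\approx$-trivial, so the vanishing has to be produced. The argument I would run is the classical moving argument: $0$ and $\infty$ are two members of the connected rational family cut out by the coordinate on $\mathbb{P}^1$, and axiom (2) furnishes, against any prescribed test cycle, a representative in proper position --- exactly the flexibility needed to identify the two fibre classes. Here I expect to lean on the foundational computation (Samuel, or Fulton's treatment of rational equivalence) rather than reproduce it in full; the essential geometric input is that rational equivalence is generated by the divisors $\operatorname{div}(f)$, each of which is of the form $\gamma_*([0]-[\infty])$ and so is handled by the reduction above.

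For the second inclusion I would argue by intersection and degree, and this direction is entirely formal. Assume $\approx$ is nontrivial, so the class $[pt]$ of a point is nonzero in $\mathcal{Z}^{0}_{\approx}(\operatorname{Spec} k)_R$ (for the trivial relation the assertion is vacuous and must be excluded). Let $\alpha \in \mathcal{Z}^{r}(X)_R$ with $\alpha \approx 0$ and let $\beta \in \mathcal{Z}^{\dim X - r}(X)_R$ be arbitrary. Using axiom (2), replace $\alpha$ by $\alpha' \approx \alpha$ meeting $\beta$ properly, so that $\alpha'\cdot\beta$ is a well-defined $0$-cycle. Viewing $\beta$ as a correspondence from $X$ to $\operatorname{Spec} k$, the structural projection gives $\beta_*(\alpha') = \deg(\alpha'\cdot\beta)\,[pt]$, and axiom (3) yields $\deg(\alpha'\cdot\beta)\,[pt] \approx 0$. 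Since $[pt]$ is nonzero and $R$ is a field, $\deg(\alpha\cdot\beta) = \deg(\alpha'\cdot\beta) = 0$; as $\beta$ was arbitrary this is exactly $\alpha \approx_{num} 0$. The one point to check en route, namely that the degree of a $0$-cycle descends to $\approx$-classes, is itself a consequence of axiom (3) applied to $X \to \operatorname{Spec} k$.
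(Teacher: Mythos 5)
The paper itself gives no proof of this lemma (it is stated as a known classical fact, essentially due to Samuel), so your proposal has to stand on its own merits. Your second half --- numerical equivalence is coarsest --- is correct and complete: viewing the test cycle $\beta$ as a correspondence from $X$ to $\operatorname{Spec}k$ and applying axiom (3) to get $\deg(\alpha'\cdot\beta)\,[pt]\approx 0$ is the standard argument, and you are right to flag both the exclusion of the trivial relation (which does satisfy the paper's axioms, so it must be excluded --- note it is a genuine counterexample to the statement rather than a case where the statement is vacuous) and the need for field coefficients to pass from $c\,[pt]\approx 0$ to $c=0$. Your reduction of the first half to the single assertion $[0]\approx[\infty]$ on $\mathbb{P}^{1}$, by reading $\beta\in\mathcal{Z}^{\bullet}(X\times\mathbb{P}^{1})_{R}$ as a correspondence from $\mathbb{P}^{1}$ to $X$ and invoking axiom (3), is also correct, including the observation that the proper-intersection hypothesis of axiom (3) is exactly what the definition of $\approx_{rat}$ builds in.

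The gap is in the sub-lemma you yourself single out as the crux. The ``classical moving argument'' as you describe it --- axiom (2) producing a representative of $[0]$ in general position --- cannot by itself identify the two fibre classes: moving $[0]$ to $\alpha'=\sum_{i} n_{i}[p_{i}]$ with support disjoint from $\{0,\infty\}$ yields the relation $[0]\approx\alpha'$ and nothing else; no relation between $\alpha'$ and $[\infty]$ has been produced, so at that point nothing is proven. The missing idea is a second application of axiom (3), to the graph $\gamma$ of a rational function $h$ on $\mathbb{P}^{1}$ chosen with $h(0)=0$ and $h(p_{i})=\infty$ for all $i$ (for instance $h=z^{N}/\prod_{i} m_{i}(z)^{n_{i}}$, $m_{i}$ the minimal polynomial of $p_{i}$, $N$ large): the graph meets $pr_{1}^{*}([0]-\alpha')$ properly, and pushing forward gives $[0]-(\deg\alpha')[\infty]\approx 0$. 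Even then one must normalize the multiplicity: applying axiom (3) to the correspondence $\mathbb{P}^{1}\times\{q\}$ gives $(1-\deg\alpha')[q]\approx 0$, so over field coefficients either $\deg\alpha'=1$ and we are done, or $[q]\approx 0$ for a rational point $q$, in which case transport by graphs of automorphisms of $\mathbb{P}^{1}$ gives $[0]\approx 0\approx[\infty]$ directly. Deferring this step to the literature would be legitimate, but then the citation should be Samuel's work on adequate equivalence relations (or the exposition in Andr\'{e}'s book, reference [1] of the paper): Fulton constructs rational equivalence but never compares adequate equivalences, so the authority you point to does not actually contain the statement, and the sketch you offer in its place would not close the gap.
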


\subsection{Cat\'{e}gories de motifs purs}

\begin{definition}

 On note $Mot_{\approx}^{eff}(k)$ l'enveloppe pseudo-ab\'{e}lienne de la cat\'{e}gorie suivante:
  Les objets sont les k-sch\'{e}mas lisses projectifs et les morphismes donn\'{e}s par $\mathcal{Z}^{dim X}_{\approx}(X\times~ Y)_{F}$ (si X connexe, sinon, faire la somme sur les composantes connexes) avec comme loi de composition, celle d\'{e}finie ci-dessus. Elle consiste en les motifs purs effectifs.
\end{definition}

\textbf{Remarque}:
\begin{center}
-On a un foncteur  naturel $h$ de $\mathcal{P}(k)$ de $Mot_{\approx}^{eff}(k)$ en prenant les mêmes objets et en associant \`{a} $f:X\rightarrow Y$ le graphe de f dont on a vu que c'\'{e}tait une correspondance de degr\'{e} z\'{e}ro.
\end{center}
\begin{center}
-Un point rationnel fournit une d\'{e}composition du motif de $\mathbb{P}^{1}$ : $h(\mathbb{P}^{1})= 1\oplus \mathbb{L}$ ou $\mathbb{L}$ est le motif de Lefschetz et 1 le motif de Spec(k), dit motif trivial.
\end{center}

\begin{definition}
En inversant le motif de Lefschetz, on obtient la cat\'{e}gorie des motifs purs. Elle est not\'{e}e $Mot_{\approx}(k)$. Dans le cas de l'\'{e}quivalence rationnelle, on parle de motifs de Chow, not\'{e} $CHM(k)_{\mathbb{Q}}$  et de motifs num\'{e}riques dans le cas de l'\'{e}quivalence num\'{e}rique.
\end{definition}

Etant donn\'{e} que l'\'{e}quivalence rationnelle est la plus fine, on a toujours un foncteur de:\\ $CHM(k)_{\mathbb{Q}}\rightarrow Mot_{\approx}(k)$.

Chacune des cohomologies de Weil  \`{a} coefficients dans K fournit un foncteur contravariant des motifs purs vers les K-espaces vectoriels gradu\'{e}s.
On a le diagramme suivant:\\
\begin{center}
\xymatrix{& CHM(k)_{\mathbb{Q}}\ar[ld]^{r_{et,l}}\ar[d]^{r_{DR}}\ar[rd]^{r_{B}}\ar[rrd]^{r_{cris}}\\VecGr_{\mathbb{Q}_{l}}&    VecGr_{k}&VecGr_{\mathbb{Q}}& VecGr_{W(k)[\frac{1}{p}]}}
\end{center}

On remarquera  que pour un corps de base fix\'{e} k, toutes ces cohomologies de Weil ne sont pas valables, par exemple celle de Betti en caract\'{e}ristique p.

Ces foncteurs se factorisent par la cat\'{e}gorie des motifs homologiques $Mot_{hom}(k)_{\mathbb{Q}}$.\\
Un motif pur M (pour une relation d'équivalence quelconque) est g\'{e}n\'{e}ralement repr\'{e}sent\'{e} par $ph(X)(r)$ o\`{u} p est un projecteur alg\'{e}brique et r un entier. \\
On a alors une structure tensorielle sur $Mot_{\approx}(k)$ donn\'{e}e par:\\
 $ph(X)(r)\otimes qh(Y)(s):=(p\otimes q)h(X\times X')(r+s)$.
 
 De plus, on a une notion naturelle de dual donn\'{e}e par:
 $ph(X)(r)\check{}=^{t}ph(X)(d-r)$ qui fait de $Mot_{\approx}(k)$ une $\otimes$-cat\'{e}gorie rigide.

Attardons nous un peu sur deux cat\'{e}gories de motifs particuli\`{e}res;
les  motifs homologiques et num\'{e}riques.
 Parmi tous les projecteurs, ceux qui joueront un rôle cl\'{e} seront ceux dit de Künneth; \'{e}tant fix\'{e} une cohomologie de Weil H, ils correspondent \`{a} la projection de $H^{\bullet}(X)$ vers $H^{i}(X)$, not\'{e}e $\pi_{i,X}$.

On a \`{a} ce propos une des conjectures standard suivantes:$\\$$\\$
\textbf{Conjecture} (Grothendieck):
Les projecteurs de Künneth $\pi_{i,X}$ sont  des correspondances alg\'{e}briques.

$\\$$\\$
Donnons quelques explications \`{a} propos de cette conjecture. Sous cette conjecture, les $\pi_{i,X}$ forment un système complet de projecteurs  d'orthogonaux. On pose $h^{i}(X)=\pi_{i,X}h(X)$. \\
On dispose alors de la graduation par le poids pour tout motif $M:=ph(X)(r)$ d\'{e}coupe sur X, on a une d\'{e}composition $M=\oplus ph^{2r+i}(X)(r)$.
Une telle d\'{e}composition vaut a fortiori pour une \'{e}quivalence plus grossi\`{e}re, en particulier pour l'\'{e}quivalence num\'{e}rique.

Dans le cas des motifs num\'{e}riques, on a la propri\'{e}t\'{e} suivante dûe a Jannsen:

\begin{theorem}(Jannsen, [16])
La cat\'{e}gorie $Mot_{num}(k)_{\mathbb{Q}}$ des motifs num\'{e}riques est ab\'{e}lienne semi-simple. On a de plus les \'{e}quivalences suivantes:\\
(i) $Mot_{\approx}(k)$ est ab\'{e}lienne semi-simple.\\
(ii) $\mathcal{Z}^{dim X}_{\approx}(X\times X)_{\mathbb{Q}}$ est une $\mathbb{Q}$-alg\`{e}bre semi-simple de dimension finie.\\
(iii) $\approx=\approx_{num}$.

\end{theorem}

Le lien entre les motifs homologiques et num\'{e}riques vient de la conjecture suivante.$\\$$\\$

\textbf{Conjecture} (Grothendieck): L'\'{e}quivalence homologique et l'\'{e}quivalence num\'{e}rique coïncident.\\

\textbf{Remarque:} On sait que cette conjecture est vraie pour les vari\'{e}t\'{e}s ab\'{e}liennes en caract\'{e}ristique nulle (cf [19 bis]).
La conjecture associ\'{e}e avec la proposition ci-dessus, ainsi que l'alg\'{e}bricit\'{e} des projecteurs, fait de la cat\'{e}gorie des motifs num\'{e}riques une cat\'{e}gorie tannakienne  semi-simple.

Comme on l'a vu, on a toute une famille de r\'{e}alisations des motifs purs num\'{e}riques vers des espaces vectoriels gradu\'{e}s, mais il s'av\`{e}re que l'on a des structures enrichies dans les cat\'{e}gories d'arriv\'{e}e que nous allons passer en revue.
$\\$$\\$
\textbf{Cohomologie \'{e}tale}:\\
On a une action naturelle du groupe de Galois $Gal(\bar{k}/k)$ sur $H^{\bullet}_{et}(X_{\bar{k}},\mathbb{Q}_{l})$  et pour tout entier q, on a donc une repr\'{e}sentation continue du groupe profini $Gal(\bar{k}/k)$,
 $H^{q}_{et}(X_{\bar{k}},\mathbb{Q}_{l})$.
De plus, $\mathbb{Q}_{l}(1)$ est isomorphe en tant que repr\'{e}sentation galoisienne a $\mathbb{Z}_{l}(1)[\frac{1}{l}]$ ou $\mathbb{Z}_{l}(1):=~\varprojlim_{\nu} \mu_{l^{\nu}}(\bar{k})$.
On a alors que le foncteur de r\'{e}alisation de $Mot_{rat}(k)\rightarrow VecGr_{\mathbb{Q}_{l}}$, s'enrichit dans la cat\'{e}gorie des  $\mathbb{Q}_{l}$-repr\'{e}sentations continues de $Gal(\bar{k}/k)$.
$\\$$\\$
\textbf{Cohomologie de de Rham alg\'{e}brique}:\\
On a une suite spectrale d'hypercohomologie
$E_{2}^{pq}=H^{q}(X,\Omega^{p}_{X/k})\Rightarrow H^{p+q}_{DR}(X/k)$
On a alors canoniquement une filtration F issue de la suite spectrale sur $H^{i}_{DR}(X/k)$.
Le calcul de $H^{2}(\mathbb{P}^{1}/k)$ montre qu'il est isomorphe \`{a} k en degr\'{e} de Hodge 1.$\\$$\\$

\textbf{R\'{e}alisation de Betti:}\\
k un sous-corps de $\mathbb{C}$, $X\in\mathcal{P}(k)$. Par la th\'{e}orie de Hodge, pour tout $i\geq 0$, on a une d\'{e}composition de $H^{i}_{DR}(X(\mathbb{C}))$. Par le th\'{e}or\`{e}me de comparaison Betti-de Rham, on a une d\'{e}composition canonique:
\begin{center}
$H^{i}_{B}(X)\otimes \mathbb{C}=\oplus_{p+q=i}H^{p,q}$ avec $\bar{H^{p,q}}=H^{q,p}$
\end{center}

\begin{definition}
Une $\mathbb{Q}$-structure de Hodge pure de poids n est un $\mathbb{Q}$-espace vectoriel de dimension finie avec une d\'{e}composition du complexifi\'{e}:
$V_{\mathbb{C}}=\oplus_{p+q=n} V^{p,q}$ avec $\bar{V^{p,q}}=V^{q,p}$. La filtration de Hodge sur $V_{\mathbb{C}}$ est d\'{e}finie par $F^{p}V_{\mathbb{C}}=\sum_{p'\geq p}{V^{n-p',p'}}$.

\end{definition}
On note $SH_{\mathbb{Q}}$ la cat\'{e}gorie des structures de Hodge pures. C'est une cat\'{e}gorie tannakienne avec comme foncteur fibre, le foncteur d'oubli.
Cette fois-ci le foncteur s'enrichit \`{a} travers $SH_{\mathbb{Q}}$.$\\$

\textbf{Cohomologie cristalline}:\\
Les groupes de cohomologie cristalline $H^{\bullet}_{cris}(X)=H^{\bullet}_{cris}(X/W(k))[\frac{1}{p}]$ sont canoniquement munis d'une action semi-lin\'{e}aire du Frobenius $\phi:H^{\bullet}_{cris}(X)\rightarrow H^{\bullet}_{cris}(X)$.

\subsection{Groupes de Galois motiviques}

Commencons par quelques rappels de th\'{e}orie tannakienne (cf [9],[24] pour plus de d\'{e}tails). Soit F un corps et soit $\mathcal{T}$ une $\otimes$-categorie rigide ab\'{e}lienne avec $End(1)=F$. Un foncteur fibre sur $\mathcal{T}$ est un $\otimes$-foncteur exact fid\`{e}le:\\
$\omega:\mathcal{T}\rightarrow Vec_{K}$ o\`{u} K est une extension de F.
Si un tel foncteur existe, on dit que $\mathcal{T}$ est une cat\'{e}gorie tannakienne.

On d\'{e}finit alors un K-sch\'{e}ma en groupes affine $G=Aut^{\otimes}\omega$.
Dans le cas où le foncteur fibre est neutre, i-e K=F, $\omega$ s'enrichit en une \'{e}quivalence de $\otimes$-cat\'{e}gories rigides:\\
$\omega:\mathcal{T}\rightarrow Rep_{F}G$
o\`{u} $Rep_{F}G$ d\'{e}signe la $\otimes$-cat\'{e}gorie rigide des F-repr\'{e}sentations de dimension finie de G. En outre, la cat\'{e}gorie des foncteurs fibres sur F est \'{e}quivalente \`{a} la cat\'{e}gorie des G-torseurs.

On a en plus un dictionnaire remarquable entre les propri\'{e}t\'{e}s algebriques de G et celles cat\'{e}goriques de $\mathcal{T}$:\\

(1)  F est de caract\'{e}ristique nulle, $\mathcal{T}$  semi-simple $\Leftrightarrow$ G  pro-r\'{e}ductif.
$\\$

(2) G est alg\'{e}brique $\Leftrightarrow$ $\mathcal{T}$ est de $\otimes$-g\'{e}n\'{e}ration finie.\\
On a \'{e}galement d'autres crit\`{e}res pour d\'{e}terminer la connexit\'{e} ou la finitude du groupe tannakien (cf [9], [24]).

On a \'{e}galement des propri\'{e}t\'{e}s analogues pour les morphismes.

Si $F:\mathcal{T}'\rightarrow \mathcal{T}$ est un $\otimes$-foncteur exact entre cat\'{e}gories tannakiennes neutres. On note $\omega$  un foncteur fibre pour $\mathcal{T}$, alors $\omega':=\omega\circ F$ est un foncteur fibre pour $\mathcal{T}'$ et on a un morphisme $f:G\rightarrow G'$.

R\'{e}ciproquement, si on a un morphisme  f entre les groupes tannakiens, on obtient un $\otimes$-foncteur exact $\phi:=f^{*}$.
On a:\\
\begin{center}
(i) f immersion ferm\'{e}e $\Leftrightarrow$ tout objet M de $\mathcal{T}$ est sous-quotient de l'image par $\phi$ d'un objet N' de $\mathcal{T}'$.
\end{center}
\begin{center}
(ii) f fid\`{e}lement plat $\Leftrightarrow$ $\phi$ est pleinement fid\`{e}le, et pour tout objet M' de $\mathcal{T}'$, tout sous-objet de $\phi(M')$ est l'image par $\phi$ d'un sous- objet M' de $\mathcal{T}'$.
\end{center}

La th\'{e}orie tannakienne apparaît \`{a} divers endroits en math\'{e}matiques, th\'{e}orie de Galois diff\'{e}rentielle, correspondance de Riemann-Hilbert, g\'{e}om\'{e}trie alg\'{e}brique et il ne s'agit pas ici de faire un panorama de la th\'{e}orie tannakienne pure. Nous allons juste nous attarder sur le  groupe de Galois motivique.
$\\$$\\$
On se place sous la conjecture $\approx_{num}=\approx_{hom}$ et on suppose l'algébricité des projecteurs. On consid\`{e}re la cat\'{e}gorie des motifs num\'{e}riques rationnels. On a vu qu'une telle cat\'{e}gorie est tannakienne neutre semi-simple avec comme foncteur fibre la r\'{e}alisation de Betti $H_{B}$.
On appelle alors groupe de Galois motivique $G_{mot, k}$ attaché à $H_{B}$  le sch\'{e}ma en groupes affines des automorphismes du foncteur fibre. C'est un  $\mathbb{Q}$-groupe pro-r\'{e}ductif.
$\\$$\\$
Etant donn\'{e} un motif num\'{e}rique M, son groupe de Galois motivique  $G_{mot, k}(M)$ est le groupe tannakien associ\'{e} \`{a} la cat\'{e}gorie tannakienne qu'il  engendre. C'est un groupe r\'{e}ductif sur $\mathbb{Q}$. De plus, on a que $G_{mot, k}=\varinjlim_{M} G_{mot, k}(M)$.
On a les propri\'{e}t\'{e}s suivantes pour les groupes de Galois motiviques:$\\$
(1) Pour un sous-corps k de $\mathbb{C}$, de clôture alg\'{e}brique $\bar{k}$, on a $G_{mot}(X_{\bar{k}})=G_{mot}(X_{\mathbb{C}})$ qui est d'indice fini dans $G_{mot}(X)$.$\\$
(2) On appelle groupe de Mumford-Tate $MT(X)$ le groupe tannakien associ\'{e} \`{a} la structure de Hodge pure $H_{B}(X)$. Ce sont des groupes connexes et la th\'{e}orie tannakienne fournit un morphisme de $G_{mot}(X)\rightarrow MT(X)$.
Conjecturalement, ce sont des groupes isomorphes. On a un cocaract\`{e}re de poids $\mathbb{G}_{m}\rightarrow MT(X)$, qui vient de la filtration par le poids.$\\$$\\$

Nous allons maintenant donner quelques exemples de calculs de groupes de Galois motiviques (cf [1] pour plus d'exemples).
$\\$
Par la th\'{e}orie des invariants de Chevalley, on a que si $\mathcal{T}$ est un cat\'{e}gorie tannakienne semi-simple neutre engendr\'{e}e par un objet M, munie d'un foncteur fibre $\omega$.
$Aut^{\otimes}\omega$ est le sous-groupe r\'{e}ductif ferm\'{e} de $GL(\omega(M))$.$\\$

Soit A une courbe elliptique non CM sur un corps k. Elle v\'{e}rifie les conjectures standards. On a $h(A)=\wedge h^{1}(A)$ o\`{u} 
$h^{1}(A)$ est de rang 2. On a donc que $G_{mot}(A)$ est un sous-groupe r\'{e}ductif de $GL_{2,\mathbb{Q}}$.
On va commencer par d\'{e}terminer $MT(A_{\mathbb{C}})$. C'est un  sous-groupe r\'{e}ductif connexe de $GL_{2}$ contenant les homoth\'{e}ties (l'image du cocaractere de poids). Il ne reste que trois possibilit\'{e}s: $\mathbb{G}_{m}$, $GL_{2}$ ou un tore de Cartan.
Ces groupes peuvent se distinguer par leur commutant. On a:
$End_{MT(A_{\mathbb{C}})}H_{B}(A)=End(A_{\mathbb{C}})\otimes\mathbb{Q}$.
On en d\'{e}duit alors que le groupe de Galois motivique est $GL_{2,\mathbb{Q}}$.$\\$

Les groupes de Galois motiviques ont beaucoup de propri\'{e}t\'{e}s conjecturales dont nous n'aurons pas l'occasion de parler, par exemple, la dimension du groupe de Galois motivique sur $\mathbb{Q}$ est \'{e}gale au degr\'{e} de transcendance de la $\mathbb{Q}$-alg\`{e}bre des p\'{e}riodes.
Dans le cas ci-dessus, cela nous donne que les p\'{e}riodes de la courbe elliptique sont transcendantes et ind\'{e}pendantes sur $\mathbb{Q}$.
Cette conjecture est appel\'{e}e conjecture des p\'{e}riodes de Grothendieck et englobe conjecturalement un grand nombre de r\'{e}sultats de la th\'{e}orie des nombres transcendants. On se référera au livre d'André [1] pour plus de précisions.
$\\$

\section{Motifs mixtes g\'{e}ometriques de Voevodsky}

\subsection{Correspondances}$\\$
Soit k un corps. On note Sm/k la cat\'{e}gorie des sch\'{e}mas lisses s\'{e}par\'{e}s sur k.
\begin{definition}Soit X un sch\'{e}ma connexe et lisse sur k, Y un k-sch\'{e}ma s\'{e}par\'{e}. Une correspondance \'{e}l\'{e}mentaire $\Gamma$ de X vers Y est un sous-sch\'{e}ma ferm\'{e} int\`{e}gre de $X\times Y$ qui est fini surjectif sur X.$\\$ Dans le cas X non connexe, une correspondance \'{e}l\'{e}mentaire est une correspondance d'une des composantes connexes.$\\$

Le groupe Cor(X,Y) est le groupe ab\'{e}lien libre engendr\'{e} par les correspondances \'{e}l\'{e}mentaires de X vers Y. On note  $Cor(X,Y)^{eff}$ les correspondances dites effectives, i-e avec tous les coefficients entiers positifs.
\end{definition}

\textbf{Exemples}: - Si $f: X\rightarrow Y$ alors la somme des composantes connexes de $\Gamma_{f}$ le graphe de f est une correspondance finie.$\\$
-Chaque sous-sch\'{e}ma ferm\'{e}  de $X\times Y$ fini, surjectif, d\'{e}finit une correspondance finie de X vers Y, il suffit de prendre la somme $\sum{ n_{k}[W_{k}]}$ o\`{u} les $W_{k}$ sont les composantes irr\'{e}ductibles de Z et  les $n_{k}$ les multiplicit\'{e}s g\'{e}om\'{e}triques de 
$W_{k}$ dans Z. $\\$

On va maintenant d\'{e}finir gr\^{a}ce au produit d'intersection une loi de composition pour les correspondances finies, valables \'{e}galement pour les cycles alg\'{e}briques.

Soit $\Gamma$ une correspondance de X vers Y et $\Gamma '$ une correspondance de Y vers Z, on d\'{e}finit $\Gamma \circ \Gamma'$ par:
$p_{13*}(p_{12}^{*}(\Gamma).p_{23}^{*}(\Gamma'))$ o\`{u} . d\'{e}signe le produit d'intersection de Serre.

On a que c'est une loi de composition associative et bilin\'{e}aire ([12]).
\begin{definition} On note SmCor/k la cat\'{e}gorie des correspondances finies o\`{u} les objets sont les sch\'{e}mas lisses s\'{e}par\'{e}s sur k et les morphismes les correspondances finies munies de la loi de composition donn\'{e}e ci -dessus.
\end{definition}

\begin{proposition} On a un foncteur fid\`{e}le $\Gamma: Sm/k \rightarrow SmCor/k$ donn\'{e} par:
 $X\rightarrow X$ et$\\$ $(f:X\rightarrow Y)\rightarrow \Gamma_{f}$ .
\end{proposition}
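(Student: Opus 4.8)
Le plan est de v\'{e}rifier successivement que $\Gamma$ est bien d\'{e}fini, qu'il respecte la composition et les identit\'{e}s, puis qu'il est fid\`{e}le. Je montrerais d'abord que pour tout morphisme $f:X\rightarrow Y$ dans $Sm/k$, le graphe $\Gamma_{f}\subseteq X\times Y$ est une correspondance finie. La premi\`{e}re projection $pr_{1}:\Gamma_{f}\rightarrow X$ admet pour inverse le morphisme $x\mapsto(x,f(x))$; c'est donc un isomorphisme, et en particulier $\Gamma_{f}\rightarrow X$ est fini et surjectif. Si $X$ est connexe, donc int\`{e}gre puisque lisse, $\Gamma_{f}$ l'est aussi et fournit une correspondance \'{e}l\'{e}mentaire; sinon on somme sur les composantes connexes, ce qui correspond exactement \`{a} la d\'{e}finition donn\'{e}e. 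On constate imm\'{e}diatement que $\Gamma_{id_{X}}=\Delta_{X}$, la diagonale, qui est l'\'{e}l\'{e}ment unit\'{e} de $Cor(X,X)$.

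L'essentiel de la preuve sera la compatibilit\'{e} \`{a} la composition: pour $f:X\rightarrow Y$ et $g:Y\rightarrow Z$, il faut \'{e}tablir $\Gamma_{g\circ f}=\Gamma_{g}\circ\Gamma_{f}$ au sens du produit $p_{13*}(p_{12}^{*}(\Gamma_{f}).p_{23}^{*}(\Gamma_{g}))$. Comme $p_{12}$ et $p_{23}$ sont plats, les tir\'{e}s en arri\`{e}re sont des images r\'{e}ciproques plates, et l'on calcule ensemblistement $p_{12}^{*}(\Gamma_{f})=\{(x,y,z):y=f(x)\}$ et $p_{23}^{*}(\Gamma_{g})=\{(x,y,z):z=g(y)\}$, chacune affect\'{e}e de la multiplicit\'{e} $1$. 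Leur intersection est $T:=\{(x,f(x),g(f(x)))\}$, isomorphe \`{a} $X$ par la premi\`{e}re coordonn\'{e}e. Je v\'{e}rifierais ensuite que cette intersection est propre: les deux cycles ont pour codimensions respectives $\dim Y$ et $\dim Z$ dans $X\times Y\times Z$, et se coupent en la dimension attendue $\dim X=(\dim X+\dim Y+\dim Z)-\dim Y-\dim Z$. La lissit\'{e} de tous les sch\'{e}mas en jeu assure la transversalit\'{e} des espaces tangents, de sorte que le produit d'intersection de Serre vaut $[T]$ avec multiplicit\'{e} $1$. Enfin $p_{13}$ envoie $T$ isomorphiquement sur $\{(x,g(f(x)))\}=\Gamma_{g\circ f}$, d'o\`{u} $p_{13*}[T]=\Gamma_{g\circ f}$, comme voulu.

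Pour la fid\'{e}lit\'{e}, si $\Gamma_{f}=\Gamma_{f'}$ dans $Cor(X,Y)$, ces deux cycles r\'{e}duits, irr\'{e}ductibles sur chaque composante et de multiplicit\'{e} $1$, d\'{e}finissent le m\^{e}me sous-sch\'{e}ma ferm\'{e}; comme $f$ se reconstruit \`{a} partir de son graphe par $f=pr_{2}\circ(pr_{1}|_{\Gamma_{f}})^{-1}$, on obtient $f=f'$. L'application $Hom_{Sm/k}(X,Y)\rightarrow Cor(X,Y)$ est donc injective, ce qui donne la fid\'{e}lit\'{e}. La principale difficult\'{e} technique est le calcul de multiplicit\'{e} du paragraphe pr\'{e}c\'{e}dent: il faut garantir que l'intersection de $p_{12}^{*}(\Gamma_{f})$ et $p_{23}^{*}(\Gamma_{g})$ est bien propre, afin que le produit de Serre se r\'{e}duise \`{a} la seule composante $T$ avec coefficient $1$; ce point repose de mani\`{e}re essentielle sur l'hypoth\`{e}se de lissit\'{e}, qui fournit \`{a} la fois l'int\'{e}grit\'{e} des graphes et la transversalit\'{e} n\'{e}cessaire.
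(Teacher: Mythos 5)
Your proof is correct and follows essentially the same route as the paper, whose entire proof consists of reducing the statement to the identity $\Gamma_{g\circ f}=\Gamma_{g}\circ\Gamma_{f}$ and referring the verification to Fulton [12] as an exercise; you simply carry out that exercise explicitly (flat pullback along $p_{12}$, $p_{23}$, proper intersection, pushforward along $p_{13}$), and you additionally verify well-definedness of $\Gamma_{f}$ and the faithfulness asserted in the statement, which the paper's proof does not address at all. One small caution: smoothness alone does not imply transversality in general, but here your multiplicity-one claim does hold because the tangent spaces of $\Gamma_{f}\times Z$ and $X\times\Gamma_{g}$ at a point of $T$ visibly span the whole ambient tangent space (equivalently, the scheme-theoretic intersection cut out by $y=f(x)$ and $z=g(y)$ is already the reduced graph of $(f,g\circ f)$, a smooth complete intersection of the expected dimension), so the Serre product is indeed $[T]$ with coefficient $1$.
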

\begin{proof}
On a juste \`{a} v\'{e}rifier que $\Gamma_{f\circ g}=\Gamma_{f} \circ \Gamma_{g}$ ce qui est facile et laiss\'{e} en exercice (cf [12]).
\end{proof}

On a  de plus que SmCor/k est munie d'une structure tensorielle donn\'{e}e par $X\otimes Y:=~X\times Y$ et le produit tensoriel de morphismes est donn\'{e} par le produit externe de cycles, qui en fait une cat\'{e}gorie monoïdale sym\'{e}trique. On voit \'{e}galement que cette cat\'{e}gorie est naturellement additive avec $[X]\oplus[Y]=[X\coprod Y]$.\\

\textbf{Remarque}: Ces d\'{e}finitions se g\'{e}n\'{e}ralisent ais\'{e}ment \`{a} une base noeth\'{e}rienne reguli\`{e}re quelconque (cf [15]), avec quelques difficult\'{e}s suppl\'{e}mentaires pour la loi de composition si la base n'est pas r\'{e}guli\`{e}re.
On a \'{e}galement une deuxi\`{e}me d\'{e}finition des correspondances finies qui s'av\`{e}re parfois plus ais\'{e}e, qui convient surtout \`{a} la caract\'{e}ristique nulle.$\\$
Soit X dans Sm/k de caract\'{e}ristique nulle, on pose:$\\$ $Sym X=\coprod_{n\geq 1} Sym^{n}(X)$. $\\$On a alors que pour Y dans Sm/k, $(Sym X)(Y)= Hom (Sym(X),Sym(Y))$ où le Hom est au sens des monoïdes.

On obtient la proposition suivante:

\begin{proposition} Il y a un morphisme de monoïdes compatible \`{a} la composition des correspondances:$\\$ $\sigma: Cor(X,Y)^{eff}\rightarrow (Sym X)(Y)$ qui est un isomorphisme.
\end{proposition}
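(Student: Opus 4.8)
The plan is to build $\sigma$ from the fibrewise description of effective correspondences as algebraic families of zero-cycles, and then to exhibit an explicit inverse. I may assume $X$ connected, the general case following by taking the disjoint sum over the connected components of $X$. The starting observation is that $Sym X=\coprod_{n\geq 1}Sym^{n}X$ is the free commutative monoid generated by $X$ in the relevant category, so that a monoid morphism $Sym X\to Sym Y$ is the same datum as a morphism of $k$-schemes $X\to Sym Y$; thus $(Sym X)(Y)$ is in bijection with the set of scheme morphisms $X\to\coprod_{m}Sym^{m}Y$, and since $X$ is connected any such morphism factors through a single $Sym^{m}Y$.

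To define $\sigma$, I first treat an effective elementary correspondence, i.e. an integral $W\subset X\times Y$ finite and surjective over $X$, of generic degree $d$. As $W\to X$ is finite and $X$ is smooth, hence normal, the cycle-theoretic fibre $[W_{x}]$ — the zero-cycle on $Y$ obtained from the fibre counted with its length multiplicities — is, in characteristic zero, a relative zero-cycle of constant degree $d$, and the assignment $x\mapsto[W_{x}]$ is an actual morphism of schemes $\sigma(W)\colon X\to Sym^{d}Y$. For a general effective correspondence $Z=\sum_{i}n_{i}W_{i}$ I then set $\sigma(Z)\colon x\mapsto\sum_{i}n_{i}[(W_{i})_{x}]$, which lands in $Sym^{\sum_{i}n_{i}d_{i}}Y$; equivalently, $\sigma(Z)$ is the product $\prod_{i}\sigma(W_{i})^{n_{i}}$ computed in the monoid $(Sym X)(Y)$. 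That $\sigma$ is a morphism of monoids is then immediate: addition of cycles on $X\times Y$ corresponds to pointwise addition of the fibre zero-cycles, which is precisely the monoid law on $Sym Y$ and hence the pointwise monoid law on $\mathrm{Hom}(Sym X,Sym Y)$.

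The main technical point is compatibility with composition, $\sigma(g\circ f)=\sigma(g)\circ\sigma(f)$ for composable effective correspondences. Writing $g\circ f=p_{13*}(p_{12}^{*}(f)\cdot p_{23}^{*}(g))$, I would compare the two sides fibrewise: over a point $x$ the composite multivalued map should send $x$ to the zero-cycle $\sum_{y}m_{y}\,[g_{y}]$, where $y$ runs through the support of $[f_{x}]$ with its multiplicities $m_{y}$. The heart of the matter is that in characteristic zero the intersection multiplicities produced by the Serre product $p_{12}^{*}(f)\cdot p_{23}^{*}(g)$ agree with these naive fibrewise multiplicities; granting this, the two morphisms $X\to Sym Y$ coincide at the generic point of the integral scheme $X$, and therefore everywhere, since $X$ is reduced and $Sym Y$ is separated. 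I expect this multiplicity comparison, together with checking that the composite cycle is again finite and surjective over $X$, to be the principal obstacle.

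Finally, to see that $\sigma$ is an isomorphism I would construct its inverse. Given a monoid morphism $\phi$, equivalently a morphism $f\colon X\to Sym^{m}Y$, I pull back along $f\times\mathrm{id}_{Y}$ the universal degree-$m$ zero-cycle $\mathcal{Z}_{m}\subset Sym^{m}Y\times Y$; the resulting effective cycle on $X\times Y$ is finite and surjective over $X$, hence defines an element of $Cor(X,Y)^{eff}$, and a fibrewise check shows that this assignment is a two-sided inverse of $\sigma$. The existence of the universal cycle $\mathcal{Z}_{m}$ with the required finiteness, valid in characteristic zero for $Y$ quasi-projective, is what makes this inverse well defined, and closes the argument.
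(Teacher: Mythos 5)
Your construction of $\sigma$ and your reduction of composition-compatibility to the generic point follow the same general lines as the paper, but the two steps you yourself flag as ``expected'' or ``granted'' are exactly where the content of the proposition lies, and neither is routine. For the composition, the agreement of Serre intersection multiplicities with naive fibrewise multiplicities cannot simply be granted: an integral correspondence $W\subset Y\times Z$, finite and surjective over a smooth $Y$, need not be flat over $Y$ (by miracle flatness this is equivalent to $W$ being Cohen--Macaulay, which can fail), so the naive fibre length and the Tor-corrected multiplicity genuinely differ a priori. The paper resolves this by reducing, via the generic point, Galois theory and passage to the perfect closure, to $X=Spec(k)$ and $\gamma=y\in Y(k)$, and then comparing the two resulting $k$-points of $Sym^{n}(Z)$ on the generating functions $b^{\otimes n}$: one side is a product of norms $\prod Nm_{k(z_{i})/k}(b(z_{i})^{n_{i}})$, while the other is the determinant of multiplication by $b$ on the derived fibre $Li^{*}_{y}\mathcal{A}_{X}$, whose Euler characteristics at the points $z_{i}$ are precisely the intersection multiplicities $n_{i}$. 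Some argument of this type (derived fibre, or deformation to the flat locus) must be supplied. Relatedly, your assertion that $x\mapsto[W_{x}]$ is a morphism of schemes is itself not automatic: the paper constructs it as the canonical section of $Sym^{n}(\Gamma)$ over the flat locus of $\Gamma/X$, characterized by $s_{\Gamma}^{*}f^{\otimes n}=\det(f\mid\mathcal{A}_{U})$, and then extends it using normality of $X$.

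The inverse is the more serious gap. The universal family $\mathcal{Z}_{m}\subset Sym^{m}Y\times Y$ is finite but \emph{not flat} over $Sym^{m}Y$, and the reduced incidence subvariety forgets multiplicities (its fibre over the point $m[y]$ is $[y]$, not $m[y]$); consequently ``pull back the cycle $\mathcal{Z}_{m}$ along $f\times id$'' has no cycle-level meaning for an arbitrary morphism $f$ unless you invoke the theory of relative zero-cycles (Suslin--Voevodsky, or Rydh's divided powers), and that theory is essentially equivalent to the statement being proved, so the argument becomes circular. The paper proves bijectivity without any universal cycle, through an extension criterion: for a dense open $V\subset X$, a correspondence $\gamma_{V}\in Cor(V,Y)$ extends to a finite correspondence on $X$ if and only if $\sigma(\gamma_{V})$ extends to a morphism $X\rightarrow Sym^{n}(Y)$, the nontrivial direction coming from the finiteness over $X$ of the image of $Y^{n}\times_{Sym^{n}(Y)}X$ in $X\times Y$. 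With this criterion, injectivity and surjectivity are both reduced to the generic point, that is to $X$ the spectrum of a field, where the statement is clear over an algebraically closed field and follows in general by Galois theory together with passage to the perfect closure. Replacing your universal-cycle step by this criterion is the natural repair.
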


On rappelle la preuve de [4] Prop 2.1.2.
\begin{proof}

(i) On peut supposer X connexe.  On d\'{e}finit $\sigma$ sur les g\'{e}n\'{e}rateurs de Cor(X,Y). Soit $\Gamma$ une correspondance \'{e}l\'{e}mentaire de X vers Y. On note n le degr\'{e} de $\Gamma$ et on \'{e}crit $\Gamma= Spec(\mathcal{A}_{X})$ o\`{u} $\mathcal{A}_{X}$ est une $\mathcal{O}_{X}$-alg\`{e}bre coh\'{e}rente. $\\$Soit U un ouvert de X sur lequel $\Gamma$ est plat. Le X-sch\'{e}ma $Sym^{n}(\Gamma)$ admet une section $s_{\Gamma}$ au-dessus de U. Pour $f\in \mathcal{A}_{U}$, la fonction $s_{\Gamma}^{*}f^{\otimes n}$ est le d\'{e}terminant de f agissant par multiplication sur le $O_{U}$-module $\mathcal{A_{U}}$.$\\$ Comme X est normal, la section s'\'{e}tend de mani\`{e}re unique \`{a} X et on associe donc \`{a} $\Gamma$ le morphisme $\sigma(\Gamma): X\stackrel{s_{\Gamma}}{\rightarrow}Sym^{n}(\Gamma)\rightarrow Sym^{n}(Y)$.$\\$

(ii) Soit $\gamma_{V} \in Cor(V,Y)$ avec V ouvert non vide  de X. Montrons que $\gamma_{V}$ s'\'{e}tend \`{a} Cor(X,Y) si et seulement si  $\sigma(\gamma_{V}): V\stackrel{s_{\gamma_{V}}}{\rightarrow}Sym^{n}(\Gamma)\rightarrow Sym^{n}(Y)$, $n= deg(\gamma_{V})$ s'\'{e}tend \`{a} X.$\\$ Soit $Z_{V}$ le support de cycle $\gamma_{V}$, Z son adh\'{e}rence dans $X\times Y$. Alors $\gamma_{V}$ s'\'{e}tend \`{a} X si et seulement si Z est fini sur X. Supposons que $\sigma(\gamma_{V})$ s'\'{e}tend \`{a} $X \rightarrow Sym^{n}(Y)$. On consid\`{e}re $Y^{n}$ comme un sch\'{e}ma fini sur  $Sym^{n}(Y)$, et soit
$\tilde{Z}:=Y^{n}\times_{Sym^{n}(Y)}X$ et Z' l'image dans $X\times Y$ par n'importe quelle projection de $Y^{n}\rightarrow Y$. On a alors que Z' est fini sur X et $Z'\supset Z_{V}$ donc Z est fini sur X, ce qu'on voulait.$\\$

(iii) D'apr\`{e}s (ii), il nous suffit de v\'{e}rifier l'isomorphisme au point g\'{e}n\'{e}rique $\eta$ de X.$\\$ En remplaçant k par $\eta$ et Y par $ Y_{\eta}$, on peut supposer $X=Spec(k)$, donc Cor(X,Y) est le groupe des z\'{e}ros-cycles de Y. Comme c'est clairement un isomorphisme pour k alg\'{e}briquement clos, on a par la th\'{e}orie de Galois que c'est vrai si k est parfait, le cas g\'{e}n\'{e}ral s'en d\'{e}duit en prenant la cl\^{o}ture parfaite.$\\$ En effet, pour une extension finie k'/k de corps les compositions des morphismes canoniques de $Z_{0}(Y)\rightleftharpoons Z_{0}(Y_{k'})$ font la multiplication par [k':k]  et $(Sym X)(k)\rightarrow (Sym X)(k')$ est injective. $\\$

(iv) Soit $\gamma$ dans $Cor(X,Y)^{eff}$ et $\gamma'$ dans $Cor(Y,Z)^{eff}$ on a $\sigma(\gamma'\gamma)=\sigma(\gamma')\circ\sigma(\gamma)$ o\`{u}   $\sigma(\gamma)$ est vu comme un morphisme de Sym(X) vers Sym(Y). Comme en (3), on se ram\`{e}ne \`{a} X=Spec(k) et $\gamma=y \in Y(k)$. On peut supposer $\gamma'$ effective et en remplacant Y par un voisinage affine de $\gamma'(y)$, que  Z=Spec(B).
Alors  $z:=\sigma(\gamma'\gamma)$ et $z':=\sigma(\gamma')\circ\sigma(\gamma)$ sont des k-points de $Sym^{n}(Z)=Spec(B)^{\Sigma_{n}}$.$\\$ Cette alg\`{e}bre est engendr\'{e}e par les $b^{\otimes n}$ qui ne s'annule pas en $\gamma'(y)$ et il faut voir que $b^{\otimes n}(z)=b^{\otimes n}(z')$. 
Si $\gamma'(y)=\sum{n_{i}[z_{i}]}$, alors $b^{\otimes n}(\gamma(y))=\prod{Nm_{k(z_{i})/k}(b(z_{i})^{n_{i}})}$ o\`{u} $Nm_{k(z_{i})/k}:k(z_{i})\rightarrow k$ est l'application norme. De plus, $b^{\otimes n}(z')$ est le d\'{e}terminant de la multiplication par b sur la fibre d\'{e}riv\'{e}e $Li^{*}_{y}\mathcal{A}_{X}$  de $\mathcal{A}_{X}$ en y. C'est un complexe de B-modules  support\'{e} en les ${z_{i}}$ et de caract\'{e}ristique d'Euler-Poincar\'{e} en $z_{i}$ les $n_{i}$.
\end{proof}$\\$
\subsection{Faisceaux et transferts}
\begin{definition}
Un pr\'{e}faisceau avec transferts est un foncteur additif contravariant\\ $F:SmCor/k\rightarrow Ab$. On note PST(k) la cat\'{e}gorie de tels foncteurs. Pour ce paragraphe, on se ref\'{e}rera \`{a} [32].
\end{definition}
\textbf{Remarque}: D'apr\`{e}s Weibel, on a que PST(k) est ab\'{e}lienne et admet assez de projectifs et d'injectifs. Nous n'en n'aurons pas l'utilit\'{e}, mais il s'av\`{e}re que c'est tr\`{e}s important dans le cadre de la cat\'{e}gorie homotopique stable des schémas.\\

\textbf{Exemples}: Le faisceau des sections globales est canoniquement muni de transferts (cf plus loin). 
Les faisceaux repr\'{e}sentables fournissent \'{e}galement une classe importante de pr\'{e}faisceaux avec transferts. Si $X\in Sm/k$, on notera $\mathbb{Z}_{tr}(X)$, le faisceau r\'{e}presentable qui s'en d\'{e}duit.
Dans ce paragraphe, on rappelle quelques propri\'{e}t\'{e}s de la topologie Nisnevich dont nous aurons besoin plus tard.
\begin{definition}
Une famille de morphismes \'{e}tales $\{ p_{i}:U_{i}\rightarrow X\}$ est un recouvrement Nisnevich, si il a la propri\'{e}t\'{e} de rel\`{e}vement suivante:
Pour tout $x\in X$, il existe i et $u \in U_{i}$ tel que $p_{i}(u)=x$ et le morphisme induit $k(x)\rightarrow k(u)$ est un isomorphisme.
\end{definition}

\textbf{Exemple}: Pour illustrer le côt\'{e} arithm\'{e}tique de la topologie Nisnevich et un de ses avantages par rapport \`{a} la topologie \'{e}tale, on a l'exemple suivant. Soit k un corps de caract\'{e}ristique diff\'{e}rente  de deux, les deux morphismes $U_{0}=\mathbb{A}^{1}-\{a\}\rightarrow \mathbb{A}^{1}$ et $U_{1}=\mathbb{A}^{1}-\{0\}\stackrel{z\rightarrow z^{2}}{\rightarrow} \mathbb{A}^{1}$ forment un recouvrement Nisnevich si et seulement si $a\in (k^{*})^{2}$ et un recouvrement \'{e}tale pour tout a non nul.

\begin{lemma}
Si $\{ p_{i}:U_{i}\rightarrow X\}$ est un recouvrement Nisnevich, il existe un ouvert non vide $V\subset X$ et un indice i tel que 
$U_{i, V}\rightarrow V$ admet une section.
\end{lemma}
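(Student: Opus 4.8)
Le plan est de se ramener au point g\'{e}n\'{e}rique de $X$, d'y fabriquer une section \`{a} partir de la seule propri\'{e}t\'{e} de rel\`{e}vement d\'{e}finissant un recouvrement Nisnevich, puis d'\'{e}tendre cette section \`{a} un voisinage ouvert. Comme $X$ est lisse sur $k$, il est r\'{e}gulier, donc ses composantes connexes sont int\`{e}gres et ouvertes dans $X$. Quitte \`{a} remplacer $X$ par l'une d'elles et les $U_i$ par leurs images r\'{e}ciproques (ce qui reste un recouvrement Nisnevich), on peut supposer $X$ int\`{e}gre, de point g\'{e}n\'{e}rique $\eta$, avec $\mathcal{O}_{X,\eta}=k(\eta)$ le corps des fonctions. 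Un ouvert non vide de cette composante \'{e}tant ouvert dans $X$, cette r\'{e}duction est licite.

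On applique ensuite la propri\'{e}t\'{e} de rel\`{e}vement au point $\eta$: il existe un indice $i$ et un point $u\in U_i$ tels que $p_i(u)=\eta$ et que $k(\eta)\to k(u)$ soit un isomorphisme. L'isomorphisme inverse $k(u)\xrightarrow{\sim}k(\eta)$, compos\'{e} au morphisme canonique $\mathrm{Spec}\,k(u)\to U_i$ d'inclusion du point $u$, fournit un morphisme $\mathrm{Spec}\,k(\eta)\to U_i$ au-dessus de $X$; autrement dit une section $s_\eta$ de la fibre g\'{e}n\'{e}rique $U_{i,\eta}=U_i\times_X\mathrm{Spec}\,k(\eta)\to\mathrm{Spec}\,k(\eta)$. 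Concr\`{e}tement, $u$ est un point de degr\'{e} r\'{e}siduel $1$ au-dessus de $\eta$, donc un point ouvert et ferm\'{e} (isomorphe \`{a} $\mathrm{Spec}\,k(\eta)$) de la $k(\eta)$-alg\`{e}bre \'{e}tale $U_{i,\eta}$.

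Il reste \`{a} \'{e}tendre $s_\eta$ \`{a} un ouvert, et c'est l\`{a} le c\oe ur technique de l'\'{e}nonc\'{e}. Deux voies s'offrent. La premi\`{e}re, la plus robuste, exploite que $p_i$ est \'{e}tale, donc de pr\'{e}sentation finie: puisque $k(\eta)=\mathcal{O}_{X,\eta}=\varinjlim_{V}\mathcal{O}_X(V)$, o\`{u} $V$ parcourt les ouverts non vides de $X$, le formalisme des limites projectives de sch\'{e}mas (EGA IV, 8.8) montre que $s_\eta$ provient d'un morphisme $V\to U_i$ au-dessus de $X$ pour un certain ouvert non vide $V\ni\eta$, quitte \`{a} r\'{e}tr\'{e}cir $V$ pour que la composition avec $p_i$ soit bien l'inclusion de $V$; on obtient ainsi la section cherch\'{e}e de $U_{i,V}\to V$. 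La seconde voie est plus g\'{e}om\'{e}trique: l'\'{e}galit\'{e} $k(u)=k(\eta)$ signifie que l'extension r\'{e}siduelle de $p_i$ en $u$ est triviale, et un morphisme \'{e}tale \`{a} extension r\'{e}siduelle triviale en un point est une immersion ouverte au voisinage de ce point; il existe donc un ouvert $W\ni u$ de $U_i$ et un ouvert $V\ni\eta$ de $X$ tels que $p_i|_W:W\to V$ soit un isomorphisme, et $V\xrightarrow{\sim}W\hookrightarrow U_i$ fournit la section voulue.

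L'obstacle principal est pr\'{e}cis\'{e}ment cette \'{e}tape d'extension: justifier proprement qu'une section au point g\'{e}n\'{e}rique se prolonge \`{a} un ouvert, en contr\^{o}lant qu'apr\`{e}s r\'{e}tr\'{e}cissement on obtient bien une \emph{section} (et pas seulement un morphisme vers $U_i$). La finitude de pr\'{e}sentation de $p_i$ via le formalisme des limites, ou bien la structure locale des morphismes \'{e}tales, sont les deux ingr\'{e}dients possibles pour lever cette difficult\'{e}; le reste de l'argument est formel.
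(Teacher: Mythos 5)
Votre premi\`{e}re voie est correcte et co\"{i}ncide en substance avec la preuve du texte: celui-ci applique la propri\'{e}t\'{e} de rel\`{e}vement au point g\'{e}n\'{e}rique de chaque composante, en d\'{e}duit que la composante de $U_{i}$ contenant le point relev\'{e} s'envoie birationnellement sur $X$, puis conclut \`{a} l'existence d'une section sur un ouvert $V$. Votre \'{e}talement de la section g\'{e}n\'{e}rique par pr\'{e}sentation finie (EGA IV, 8.8) est exactement la justification de ce dernier pas, que le texte laisse implicite; la preuve est donc compl\`{e}te par cette voie.

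En revanche, le principe g\'{e}n\'{e}ral invoqu\'{e} dans votre seconde voie est faux: un morphisme \'{e}tale dont l'extension r\'{e}siduelle en un point est triviale n'est pas, en g\'{e}n\'{e}ral, une immersion ouverte au voisinage de Zariski de ce point. Contre-exemple: sur un corps alg\'{e}briquement clos de caract\'{e}ristique $\neq 2$, le morphisme $f:\mathbb{G}_{m}\rightarrow\mathbb{G}_{m}$, $z\mapsto z^{2}$, est \'{e}tale et l'extension r\'{e}siduelle en $z=1$ est triviale; pourtant tout ouvert de Zariski $W$ contenant $z=1$ est cofini, donc contient $a$ et $-a$ pour presque tout $a$, de sorte que $f|_{W}$ n'est jamais injectif et qu'aucun couple $(W,V)$ comme vous l'affirmez n'existe. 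C'est pr\'{e}cis\'{e}ment la raison d'\^{e}tre de la topologie de Nisnevich: l'\'{e}nonc\'{e} ne devient vrai qu'apr\`{e}s hens\'{e}lisation. Dans votre situation, la conclusion se sauve parce que $u$ est un point \emph{g\'{e}n\'{e}rique}: la composante de $U_{i}$ contenant $u$ est ouverte et int\`{e}gre ($U_{i}$ est r\'{e}gulier, \'{e}tant \'{e}tale sur $X$ lisse), elle s'envoie birationnellement sur $X$ qui est normal, et le th\'{e}or\`{e}me principal de Zariski (quasi-fini, s\'{e}par\'{e}, birationnel, but normal) fournit alors l'immersion ouverte --- mais ce sont la birationalit\'{e} et la normalit\'{e} qui font marcher l'argument, et non la seule trivialit\'{e} de l'extension r\'{e}siduelle en un point.
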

Pour chaque point g\'{e}n\'{e}rique x de X, il y a un point g\'{e}n\'{e}rique $u\in U_{i}$ tel que $k(x)\cong k(u)$. On a donc un morphisme birationnel entre les composantes irr\'{e}ductibles de $U_{i}$ et X, i-e $U_{i}\rightarrow X$ on a une section sur un ouvert V contenant x.\\

\textbf{Remarque}: Les points pour la topologie de Nisnevich sont les anneaux locaux hens\'{e}liens. En effet, si $\{U_{i}\rightarrow Spec(R)\}$ est un recouvrement Nisnevich, avec R hens\'{e}lien, alors un certain $U_{i}$ est fini \'{e}tale d'où $U_{i}\rightarrow Spec(R)$ est scind\'{e} et donc tout recouvrement Nisnevich admet un raffinement par le recouvrement trivial.

La raison fondamentale qui justifie en grande partie l'utilisation de la topologie \'{e}tale ou Nisnevich par rapport \`{a} la topologie de Zariski est le lemme suivant:

\begin{lemma}(Prop 6.12 [21])\\
$p:U\rightarrow X$ un recouvrement \'{e}tale (ou Nisnevich). On d\'{e}finit $\mathbb{Z}_{tr}(\check{U})$ le complexe de Cech:
\xymatrix{..\ar[rr]^{p_{0}-p_{1}+p_{2}}&&\mathbb{Z}_{tr}(U\times U)\ar[r]^{p_{0}-p_{1}}&\mathbb{Z}_{tr}(U)\ar[r] &0}.$\\$
Alors  $\mathbb{Z}_{tr}(\check{U})$ est une r\'{e}solution \'{e}tale (resp Nisnevich) de $\mathbb{Z}_{tr}(X)$, i-e  $\mathbb{Z}_{tr}(\check{U})\rightarrow  \mathbb{Z}_{tr}(X)\rightarrow ~0$ est exact.
\end{lemma}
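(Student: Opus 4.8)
The plan is to prove exactness of the augmented \v{C}ech complex
\[
\cdots \to \mathbb{Z}_{tr}(U\times_X U) \to \mathbb{Z}_{tr}(U) \to \mathbb{Z}_{tr}(X) \to 0
\]
of Nisnevich (resp. \'etale) sheaves by reducing to a computation on stalks, at which point a \emph{section} of the covering will produce a contracting homotopy. Each term $\mathbb{Z}_{tr}(U^{\times_X n})$ is a representable presheaf with transfers, hence (by the general theory of sheaves with transfers) a Nisnevich and \'etale sheaf. Since a bounded-below complex of such sheaves is exact precisely when all its stalks are acyclic, and since the Nisnevich (resp. \'etale) points are the Henselian (resp. strictly Henselian) local rings, it suffices to show that for every such local scheme $S$ equipped with a structure map $S\to X$ the complex of abelian groups obtained by applying $\mathrm{Cor}(S,-)=\mathbb{Z}_{tr}(-)(S)$ is exact. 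Here the stalk of $\mathbb{Z}_{tr}(Y)$ at $S$ must be identified with $\mathrm{Cor}(S,Y)$, $\mathrm{Cor}$ being extended to the pro-smooth scheme $S$ by the obvious colimit.

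First I would pull back the covering along $S\to X$ to obtain $U_S:=U\times_X S\to S$, again a Nisnevich (resp. \'etale) covering, but now of a Henselian (resp. strictly Henselian) local scheme. By the Remark and the preceding Lemme, over such a base every Nisnevich (resp. \'etale) covering splits, so there is a section $s\colon S\to U$ over $X$. This splitting is the conceptual heart of the argument, and it is exactly the content already granted above.

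Next, the section $s$ equips the augmented simplicial scheme $\big(U^{\times_X(\bullet+1)}\to X\big)$, after base change to $S$, with an extra degeneracy $s_{-1}\colon (U_S)^{\times_S(n+1)}\to (U_S)^{\times_S(n+2)}$, $(u_0,\dots,u_n)\mapsto(s,u_0,\dots,u_n)$; this is well defined because all entries lie over the same point of $X$. The extra degeneracy makes the augmented simplicial object split, hence simplicially contractible. Applying the additive functor $Y\mapsto\mathrm{Cor}(S,Y)$, which via $\mathbb{Z}_{tr}$ turns each of the structural scheme maps into a finite correspondence and respects the simplicial identities, transports this contraction into a genuine chain contraction of the augmented complex of abelian groups. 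Thus that complex is acyclic for every $S$, and since all stalks of the augmented complex vanish, the complex of Nisnevich (resp. \'etale) sheaves is exact, which is the asserted resolution.

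The main obstacle is not the splitting, which is already available, but the compatibility of the whole argument with transfers. Two checks are delicate. One must justify that $\mathbb{Z}_{tr}(Y)$ is a sheaf for the relevant topology and that its stalk at the local scheme $S$ is correctly computed as $\mathrm{Cor}(S,Y)$, so that ``exactness on stalks'' is legitimate. More importantly, the extra degeneracies $s_{-1}$ are a priori only morphisms of schemes, whereas the complex we contract is assembled from finite correspondences; one must verify that the induced maps on $\mathrm{Cor}(S,-)$ really satisfy the chain-homotopy identities, i.e. that the contraction is compatible with the transfer structure. This is precisely where the functoriality of $X\mapsto\mathbb{Z}_{tr}(X)$ and the associativity and bilinearity of the composition of finite correspondences recorded in the previous section are used.
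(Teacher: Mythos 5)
Your reduction to stalks matches the paper's first step: representable presheaves with transfers are \'{e}tale/Nisnevich sheaves, the points of these topologies are the (strictly) Henselian local schemes $S$, and the stalk of $\mathbb{Z}_{tr}(Y)$ at $S$ is $Cor(S,Y)$. The gap is in the contraction itself. The stalk complex has terms $Cor(S,U^{\times_X n})$, whereas your extra degeneracy exists only as a morphism of schemes between the base-changed fibre products $(U_S)^{\times_S n}$; applying $Cor(S,-)$ to it therefore contracts the complex $\cdots\to Cor(S,U_S\times_S U_S)\to Cor(S,U_S)\to Cor(S,S)\to 0$, which maps to the stalk complex but is not isomorphic to it. The identification $Cor(S,U^{\times_X n})\cong Cor(S,(U_S)^{\times_S n})$ that your argument implicitly needs is false: an element of $Cor(S,X)$ is a cycle on $X\times S$ finite and surjective over $S$, and its support $Z$ has no reason to lie over the graph of the structure map $S\to X$. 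Concretely, take $X=\mathbb{A}^{1}$, $S$ the Henselization of $X$ at $0$, and $\alpha\in Cor(S,X)$ the graph of the constant map with value $1$: the section $s\colon S\to U_S$, which lives over a Henselian neighbourhood of $0$, is of no help in lifting $\alpha$ to $Cor(S,U)$; what is needed is a splitting of the covering over the point $1$. So the splitting of $U_S\to S$ is not \emph{the conceptual heart} of the proof; it only handles those correspondences supported over the graph of $S\to X$.

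The paper's proof supplies exactly the step you are missing. It writes the stalk complex as a filtered colimit, over all closed subschemes $Z\subset X\times S$ finite and surjective over $S$, of complexes $\cdots\to L((Z_U\times_Z Z_U)/S)\to L(Z_U/S)\to L(Z/S)\to 0$, where $Z_U:=Z\times_X U$ and $L(W/S)$ is the free abelian group on the irreducible components of $W$ that are finite and surjective over $S$, a construction covariantly functorial for quasi-finite morphisms over $S$. The key point is that $Z$, being finite over the Henselian local scheme $S$, is itself a disjoint union of Henselian local schemes, so the covering $Z_U\to Z$ (not $U_S\to S$) splits; a section $s_1$ of it then induces the contracting homotopies $s_k=L(s_1\times \mathrm{id})$, and exactness passes to the filtered colimit. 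Note also that your closing paragraph flags the wrong danger: compatibility of a scheme-level simplicial contraction with $Cor(S,-)$ is automatic functoriality (post-composition with graphs preserves the simplicial identities); the genuine problem is that your contraction lives on the wrong complex, and repairing it forces one to split the cover over the support of each correspondence, which is precisely where the Henselianness of finite schemes over a Henselian base enters.
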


\begin{proof} On le traite dans le cas \'{e}tale, le cas Nisnevich ne n\'{e}cessite que des modifications mineures.
Comme c'est un complexe de faisceaux, il suffit de le v\'{e}rifier en chacun des points. Comme les points en topologie étale sont les schémas henséliens strictement locaux, il suffit de  montrer que pour tout sch\'{e}ma hens\'{e}lien  local S sur k, la suite de groupes ab\'{e}liens:
\begin{center}
\xymatrix{..\ar[r]&\mathbb{Z}_{tr}(U)(S)\ar[r]^{p_{0}-p_{1}}&\mathbb{Z}_{tr}(X)(S)\ar[r] &0}(*)  est exacte.
\end{center}
Pour prouver l'exactitude, on a besoin d'une \'{e}tape suppl\'{e}mentaire. Soit Z un sous-sch\'{e}ma de $X\times S$ qui est quasi-fini sur S. On note L(Z/S) le groupe ab\'{e}lien libre engendr\'{e} par les composantes irr\'{e}ductibles de Z qui sont finies surjectives sur S. On a que L(Z/S) est fonctoriel et covariant en Z par rapport au morphismes quasi-finis sur S. Clairement la suite (*) s'obtient que la limite inductive de complexes de la forme:
\begin{center}
\xymatrix{..\ar[r] &\mathbb{Z}_{tr}(Z\times U)\ar[r] &L(Z/S)\ar[r] &0}(**) 
\end{center}
$\\$o\`{u} la limite est prise sur tous les sous-sch\'{e}mas de $X\times S$ qui sont finis surjectifs sur S. On est donc ramen\'{e} à montrer l'exactitude de (**).
Comme S est hens\'{e}lien, on a alors que Z est \'{e}galement hens\'{e}lien, comme il est fini surjectif sur S, d'ou $Z_{U}:=Z\times U\rightarrow Z$ se scinde Soit $s_{1}$ une section. On pose $Z_{U,Z}^{k}=Z_{U}\times..\times Z_{U}$. Il suffit de montrer que les $s_{k}:L((Z_{U,Z})^{k}/S)\rightarrow L((Z_{U,Z})^{k+1}/S)$ sont des homotopies contractantes où $s_{k}=L(s_{1}\times id_{(Z_{U,Z})^{k}})$.

\end{proof}

\textbf{Remarque:} En topologie de Zariski, ce lemme est faux. En effet, si $U_{i}\rightarrow X$ est un recouvrement Zariski d'un sch\'{e}ma X connexe semi-local fini sur S local; en prenant le graphe $\Gamma$ de X, on v\'{e}rifie qu'il ne provient pas d'un \'{e}lement de  $\oplus \mathbb{Z}_{tr}(U_{i})(S)$.
\begin{definition} Un faisceau Nisnevich (\'{e}tale, Zariski, ou pour n'importe quelle topologie de son choix) avec transferts est un pr\'{e}faisceau avec transferts dont le pr\'{e}faisceau sous-jacent est un faisceau Nisnevich (de même,....) sur Sm/k.
\end{definition}

\begin{proposition}
Soit F un pr\'{e}faisceau avec transferts, alors on peut faisceautiser F dans PST(k) en un faisceau Nisnevich avec transferts de mani\`{e}re unique avec la propri\'{e}t\'{e} universelle qu'on connaît.
En d'autres termes, le foncteur d'oubli de $Sh_{Nis}(SmCor/k)\rightarrow PST(k)$ admet un adjoint \`{a} droite $a_{Nis}$ qui est exact et commute avec le foncteur d'oubli des faisceaux sur Sm/k.
\end{proposition}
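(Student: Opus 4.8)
Le plan est de poser $a_{Nis}F$ \'{e}gal \`{a} la faisceautisation Nisnevich (ordinaire) du pr\'{e}faisceau ab\'{e}lien sous-jacent \`{a} $F$, puis de munir ce faisceau d'une structure canonique de transferts prolongeant celle de $F$. Toute la difficult\'{e} r\'{e}side dans cette seconde \'{e}tape : un recouvrement Nisnevich de $Y$ ne fournit pas directement un recouvrement de $X$ compatible \`{a} une correspondance finie $\alpha\in Cor(X,Y)$, de sorte que l'on ne peut pas faire agir $\alpha$ niveau par niveau sur les complexes de \v{C}ech.

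Je commencerais par rappeler l'identit\'{e} de Yoneda dans $PST(k)$ : pour tout $X\in Sm/k$ et tout pr\'{e}faisceau avec transferts $G$, on a $G(X)=\mathrm{Hom}_{PST}(\mathbb{Z}_{tr}(X),G)$, les $\mathbb{Z}_{tr}(X)$ formant une famille de g\'{e}n\'{e}rateurs projectifs de $PST(k)$. Coupl\'{e}e au Lemme pr\'{e}c\'{e}dent --- la r\'{e}solution de \v{C}ech $\mathbb{Z}_{tr}(\check{U})\to\mathbb{Z}_{tr}(X)$ associ\'{e}e \`{a} un recouvrement Nisnevich $U\to X$ --- cette identit\'{e} traduit la condition de descente Nisnevich pour un pr\'{e}faisceau avec transferts en une propri\'{e}t\'{e} d'exactitude v\'{e}rifiable sur les repr\'{e}sentables, et permet d'identifier les faisceaux Nisnevich avec transferts \`{a} l'int\'{e}rieur de $PST(k)$.

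Pour construire les transferts sur $a_{Nis}F$, j'utiliserais la description de la faisceautisation par le proc\'{e}d\'{e} de Godement, ou par la construction $+$ it\'{e}r\'{e}e $a_{Nis}F=(F^{+})^{+}$ avec $F^{+}(X)=\mathrm{colim}_{\mathcal{U}}\check{H}^{0}(\mathcal{U},F)$, la colimite portant sur les recouvrements Nisnevich de $X$. Comme une colimite filtrante de pr\'{e}faisceaux avec transferts reste un pr\'{e}faisceau avec transferts, tout revient \`{a} faire agir une correspondance finie $\alpha\in Cor(X,Y)$ de fa\c{c}on compatible au passage \`{a} la colimite. C'est ici l'obstacle principal : \'{e}tant donn\'{e} un recouvrement Nisnevich de $Y$, la finitude et la surjectivit\'{e} du support de $\alpha$ au-dessus de $X$ permettent, en restreignant \`{a} un ouvert dense puis en prolongeant par normalit\'{e} (selon le m\^{e}me proc\'{e}d\'{e} que pour l'isomorphisme $\sigma$ ci-dessus), d'extraire un recouvrement Nisnevich de $X$ et un rel\`{e}vement de $\alpha$ rendant les carr\'{e}s de \v{C}ech commutatifs \`{a} la limite. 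Le caract\`{e}re fini des correspondances est donc l'ingr\'{e}dient crucial.

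Il resterait \`{a} \'{e}tablir l'unicit\'{e}, l'exactitude et la propri\'{e}t\'{e} universelle. L'unicit\'{e} de la structure de transferts d\'{e}coule de ce que $F\to a_{Nis}F$ devient un isomorphisme apr\`{e}s faisceautisation et de ce qu'un transfert sur un faisceau est enti\`{e}rement d\'{e}termin\'{e} par ses valeurs sur les $\mathbb{Z}_{tr}(X)$, via l'identit\'{e} de Yoneda. Pour l'exactitude, j'invoquerais le fait que la topologie de Nisnevich poss\`{e}de suffisamment de points --- les anneaux locaux hens\'{e}liens (Remarque ci-dessus) --- si bien que la faisceautisation se calcule fibre par fibre comme une colimite filtrante, donc est exacte ; la compatibilit\'{e} au foncteur d'oubli vers les faisceaux sur $Sm/k$ est imm\'{e}diate puisque $a_{Nis}F$ a par construction le m\^{e}me pr\'{e}faisceau sous-jacent que la faisceautisation usuelle. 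Enfin l'adjonction annonc\'{e}e r\'{e}sulte formellement de la fonctorialit\'{e} et de l'unicit\'{e} de la construction.
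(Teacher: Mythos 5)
You are attempting strictly more than the paper does here: the paper's entire proof is the citation ``cf Th 13.1 [21]'', and your plan (sheafify the underlying presheaf, extend the transfers, get uniqueness from Yoneda and separatedness, exactness from henselian points, the adjunction formally) is exactly the plan of that cited proof; your auxiliary reductions (the $\mathbb{Z}_{tr}(X)$ as projective generators of $PST(k)$, stability of transfers under filtered colimits, stalkwise computation of sheafification, compatibility with the forgetful functor) are all correct. The gap is in the one step you yourself single out as the crux: producing, for $\alpha\in Cor(X,Y)$ and a Nisnevich covering $V\rightarrow Y$, a Nisnevich covering $W\rightarrow X$ and a lift of $\alpha$ to $Cor(W,V)$. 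Your mechanism --- restrict to a dense open $U\subset X$, then ``prolonger par normalit\'{e}'' as for the isomorphism $\sigma$ --- does not work. The dense-open step is fine: the pulled-back covering $Z\times_{Y}V\rightarrow Z$ of the support $Z$ of $\alpha$ splits over a dense open of $Z$, hence $\alpha$ lifts over some dense open $U\subset X$. But the extension step fails, because what must be extended is a finite correspondence into $V$, not a morphism to $Sym^{n}(Y)$, and the closure of the dense-open lift in $X\times V$ is in general not finite over $X$; in fact no lift over $X$ itself need exist. Concretely, take $X=Y=\mathbb{A}^{1}$, $\alpha$ the transpose of $z\mapsto z^{2}$, and the covering of the paper's own example with $a=1$: $V_{1}=\mathbb{A}^{1}-\{1\}\hookrightarrow\mathbb{A}^{1}$ and $V_{2}=\mathbb{A}^{1}-\{0\}\stackrel{z\mapsto z^{2}}{\rightarrow}\mathbb{A}^{1}$. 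Any lift of $\alpha$ would have a component whose image in $X\times Y$ is closed (its source being finite, hence proper, over $X$) and equal to $Z=\{z^{2}=w\}$; but a component lying over $V_{1}$ has image avoiding $(1,1)\in Z$, and one lying over $V_{2}$ has image avoiding $(0,0)\in Z$, a contradiction. The lift over $U=X-\{0,1\}$ does exist (into $V_{1}$), but its closure $Z-\{(1,1)\}$ in $X\times V_{1}$ is only quasi-finite, not finite, over $X$.

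What the correct argument requires at the non-generic points is henselian splitting, not normality: for $x\in X$, the scheme $Z\times_{X}X^{h}_{x}$ is finite over the henselian local scheme $X^{h}_{x}$, hence a disjoint union of henselian local schemes, over which every Nisnevich covering splits; the resulting lift over $X^{h}_{x}$ spreads out to an \'{e}tale neighbourhood of $x$ with the same residue field, and these neighbourhoods, together with $U$, assemble into the desired covering $W\rightarrow X$ (one must still check independence of the choices after refinement, which is where the sheaf-theoretic \v{C}ech resolution of Lemme 3, i.e.\ Prop 6.12 de [21], is the efficient tool rather than hand-made lifts). This henselian input is precisely the ``localisation des correspondances finies'' machinery of la section 4 (suivant Ivorra [15 bis]), and it is the content hidden in the proof of Th 13.1 de [21] that the paper is quoting; with this substitution, and keeping the rest of your outline, the proof goes through. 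A last remark: the adjunction you then obtain exhibits $a_{Nis}$ as adjoint \`{a} \emph{gauche} du foncteur d'oubli (the statement's ``adjoint \`{a} droite'' is a slip of the paper itself, cf.\ [21, 13.1]); your write-up should state the direction explicitly.
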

\begin{proof}
cf Th 13.1 [21].

\end{proof}
\subsection{Localisation dans une cat\'{e}gorie} $\\$
Soient  C une cat\'{e}gorie, S  une famille de morphismes.
\begin{definition}
Une localisation de C par S est la donn\'{e}e d'une cat\'{e}gorie $C_{S}$ et un foncteur $Q:C\rightarrow C_{S}$ satisfaisant:
\begin{enumerate}
\item 
Pour tout $s\in S$, Q(s) est un isomorphisme.
\item
Pour tout foncteur $F:\mathcal{C}\rightarrow \mathcal{A}$ tel que F(s) est un isomorphisme pour tout $s\in S$, il existe un foncteur $F_{S}:\mathcal{C}_{S}\rightarrow \mathcal{A}$ et un isomorphisme $F\cong F(s)\circ Q$,$\\$
\begin{center}
\xymatrix{ \mathcal{C} \ar[d]_{Q}\ar[r]^{F}&\mathcal{A}\\\mathcal{C}_{S}\ar@{.>}[ur]^{F_{S}}}
\end{center}
\item
Si $G_{1}$ et $G_{2}$ sont deux objets de $Fct(\mathcal{C}_{S},\mathcal{A})$ alors l'application naturelle:$\\$
$Hom_{Fct(\mathcal{C}_{S},\mathcal{A})}(G_{1},G_{2})\rightarrow Hom_{Fct(\mathcal{C},\mathcal{A})}(G_{1}\circ Q,G_{2}\circ Q)$ est un isomorphisme.

\end{enumerate}$\\$$\\$
\end{definition}
\textbf{Remarque}: (c) signifie que le foncteur $\circ Q:Fct(\mathcal{C}_{S},\mathcal{A})\rightarrow Fct(\mathcal{C},\mathcal{A})$ est pleinement fid\`{e}le. Cela implique que $F_{S}$  dans (2) est unique \`{a} unique isomorphisme pr\`{e}s.

\begin{proposition}

Si ${C}_{S}$ existe, elle est unique \`{a}  \'{e}quivalence de cat\'{e}gories pr\`{e}s.

\end{proposition}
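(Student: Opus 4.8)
Si $\mathcal{C}_S$ existe, elle est unique à équivalence de catégories près.

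The proof is a completely formal consequence of the universal property packaged in the definition, so my plan is to extract an equivalence directly from the universal mapping properties, exactly as one does for any object defined by a universal property.

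Le plan est le suivant. Supposons donnés deux localisations $(\mathcal{C}_S, Q)$ et $(\mathcal{C}'_S, Q')$ de $\mathcal{C}$ par $S$. Le foncteur $Q':\mathcal{C}\rightarrow\mathcal{C}'_S$ envoie tout $s\in S$ sur un isomorphisme (par la propriété (a) appliquée à $\mathcal{C}'_S$); la propriété universelle (b), appliquée à $F=Q'$ et $\mathcal{A}=\mathcal{C}'_S$, fournit donc un foncteur $\Phi:\mathcal{C}_S\rightarrow\mathcal{C}'_S$ avec $\Phi\circ Q\cong Q'$. Symétriquement, en échangeant les rôles des deux localisations, on obtient un foncteur $\Psi:\mathcal{C}'_S\rightarrow\mathcal{C}_S$ avec $\Psi\circ Q'\cong Q$.

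Il reste à voir que $\Psi\circ\Phi\cong \mathrm{id}_{\mathcal{C}_S}$ et $\Phi\circ\Psi\cong\mathrm{id}_{\mathcal{C}'_S}$, et c'est ici qu'intervient de façon cruciale la propriété (c), qui exprime la pleine fidélité du foncteur $\circ Q:Fct(\mathcal{C}_S,\mathcal{C}_S)\rightarrow Fct(\mathcal{C},\mathcal{C}_S)$. On calcule $(\Psi\circ\Phi)\circ Q\cong\Psi\circ(\Phi\circ Q)\cong\Psi\circ Q'\cong Q\cong\mathrm{id}_{\mathcal{C}_S}\circ Q$. Ainsi les deux foncteurs $\Psi\circ\Phi$ et $\mathrm{id}_{\mathcal{C}_S}$ de $Fct(\mathcal{C}_S,\mathcal{C}_S)$ ont des images isomorphes par $\circ Q$; la pleine fidélité de $\circ Q$ (propriété (c), avec $G_1=\Psi\circ\Phi$ et $G_2=\mathrm{id}_{\mathcal{C}_S}$) garantit que l'isomorphisme naturel $(\Psi\circ\Phi)\circ Q\cong\mathrm{id}_{\mathcal{C}_S}\circ Q$ provient d'un unique isomorphisme naturel $\Psi\circ\Phi\cong\mathrm{id}_{\mathcal{C}_S}$. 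Le même argument, en utilisant la propriété (c) pour $\mathcal{C}'_S$, donne $\Phi\circ\Psi\cong\mathrm{id}_{\mathcal{C}'_S}$, ce qui établit que $\Phi$ est une équivalence de catégories et conclut.

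L'essentiel de la difficulté, si tant est qu'il y en ait une, réside dans l'emploi correct de la propriété (c): c'est elle, et non la seule propriété (b), qui permet de remonter un isomorphisme défini \emph{après composition avec} $Q$ en un isomorphisme des foncteurs eux-mêmes. La propriété (b) à elle seule ne fournirait l'unicité qu'à isomorphisme près des foncteurs induits et ne suffirait pas à comparer $\Psi\circ\Phi$ à l'identité; tout repose donc sur la pleine fidélité de $\circ Q$ déjà soulignée dans la remarque suivant la définition. Le reste n'est que manipulation formelle d'isomorphismes naturels et de l'associativité de la composition des foncteurs.
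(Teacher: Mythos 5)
Your proof is correct. Note that the paper itself offers no proof of this proposition: it is stated bare, immediately after the remark that the third condition of the definition means the functor $\circ Q:Fct(\mathcal{C}_{S},\mathcal{A})\rightarrow Fct(\mathcal{C},\mathcal{A})$ is fully faithful. Your argument supplies exactly the standard reasoning that remark is pointing to: the universal property (condition 2) of each localization produces $\Phi$ and $\Psi$ with $\Phi\circ Q\cong Q'$ and $\Psi\circ Q'\cong Q$, and full faithfulness of $\circ Q$ (condition 3, applied with $\mathcal{A}=\mathcal{C}_{S}$, respectively $\mathcal{A}=\mathcal{C}'_{S}$) lifts the composite isomorphisms $(\Psi\circ\Phi)\circ Q\cong \mathrm{id}_{\mathcal{C}_{S}}\circ Q$ and $(\Phi\circ\Psi)\circ Q'\cong \mathrm{id}_{\mathcal{C}'_{S}}\circ Q'$ to isomorphisms of the functors themselves. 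The only step you pass over silently is that the natural transformation $\Psi\circ\Phi\Rightarrow\mathrm{id}_{\mathcal{C}_{S}}$ obtained by lifting is itself invertible: condition 3 gives you a unique \emph{morphism} of functors inducing the given isomorphism after composition with $Q$, and you must still check it is an isomorphism. This is the standard fact that a fully faithful functor reflects isomorphisms (lift the inverse isomorphism as well, and use the injectivity in condition 3 to see that the two lifts are mutually inverse); it deserves one explicit line but is not a gap in substance.
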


\begin{definition}
On dit que S est un syst\`{e}me multiplicatif \`{a} droite si il verifie les axiomes ci-dessus.
\begin{enumerate}
\item
Pour tout $X\in \mathcal{C}$, $id_{X} \in \mathcal{S}$.\\
\item
Pour $f\in \mathcal{S}$, $g\in \mathcal{S}$, si $g\circ f$ existe, $g\circ f \in \mathcal{S}$\\
\item
Soient deux morphismes, $f:X\rightarrow Y et s:X\rightarrow X'$ avec $s\in \mathcal{S}$, il existe $t:Y\rightarrow Y'$ et $g:X'\rightarrow Y'$ avec $t\in \mathcal{S}$ et $g\circ s=t\circ f$. On le repr\'{e}sente par le diagramme:$\\$
\begin{center}
$\xymatrix{X'\\ X\ar[u]_{s} \ar[r]^{f}& Y}\Rightarrow \xymatrix{X'\ar@{.>}[r]_{g} & Y'\\ X\ar[u]_{s} \ar[r]^{f}& Y\ar@{.>}[u]_{t} }$
\end{center}
\item
Soient $f, g:X\rightarrow Y$ deux morphismes parall\`{e}les. Si il existe $s\in \mathcal{S}$: $W\rightarrow X$ tel que $g\circ s=f\circ s$, alors il existe $t\in \mathcal{S}:Y\rightarrow Z$ tel que $t\circ f=t\circ g$. On le r\'{e}sume par le diagramme suivant:$\\$
\begin{center}
$\xymatrix{W\ar[r]^{s} & X \ar[r]^{f}_{g} & Y\ar@{.>}[r]^{t}&Z}$
\end{center}
\end{enumerate}
\end{definition}

\textbf{Remarque}: On peut  d\'{e}finir de mani\`{e}re analogue un syst\`{e}me multiplicatif \`{a} gauche.
\begin{definition}
Soit S un syst\`{e}me multiplicatif \`{a} droite et $X\in \mathcal{C}$. On d\'{e}finit la cat\'{e}gorie $S^{X}$ comme suit:
$Ob(S^{X})= \{s: X\rightarrow X' /s\in \mathcal{S}\} $
$\\$
$Hom((s:X\rightarrow X'),(s:X\rightarrow X''))= \{h: X\rightarrow X'' /h\circ s=s'\}$.
\end{definition}
\begin{proposition}
Si S est un syst\`{e}me multiplicatif \`{a} droite, alors la cat\'{e}gorie $S^{X}$ est filtrante.
\end{proposition}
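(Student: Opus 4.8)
Le plan est de v\'{e}rifier les trois conditions qui caract\'{e}risent une cat\'{e}gorie filtrante: $S^{X}$ est non vide, tout couple d'objets est domin\'{e} par un troisi\`{e}me, et tout couple de fl\`{e}ches parall\`{e}les est \'{e}galis\'{e} par une fl\`{e}che sortante. L'ingr\'{e}dient utilis\'{e} de bout en bout est que les objets de $S^{X}$ sont, par d\'{e}finition m\^{e}me, des \'{e}l\'{e}ments de $S$; joint \`{a} la stabilit\'{e} de $S$ par composition (axiome (2)), ce fait assurera que les objets-buts construits \`{a} chaque \'{e}tape appartiennent encore \`{a} $S$, et sont donc bien des objets de $S^{X}$.

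\emph{Non-vacuit\'{e}.} L'axiome (1) fournit $id_{X}\in S$, de sorte que $(id_{X}:X\rightarrow X)$ est d\'{e}j\`{a} un objet de $S^{X}$.

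\emph{Co-dominance.} \'{E}tant donn\'{e} deux objets $s:X\rightarrow X'$ et $s':X\rightarrow X''$, j'appliquerais l'axiome (3) (condition de Ore) \`{a} la fl\`{e}che $f:=s'$ et \`{a} l'\'{e}l\'{e}ment $s\in S$: on obtient $u:X''\rightarrow Y$ dans $S$ et $g:X'\rightarrow Y$ v\'{e}rifiant $g\circ s=u\circ s'$. Je poserais alors $s'':=u\circ s'$, qui appartient \`{a} $S$ par l'axiome (2) puisque $u\in S$ et $s'\in S$. Ainsi $g$ d\'{e}finit une fl\`{e}che $(s:X\rightarrow X')\rightarrow(s'':X\rightarrow Y)$ et $u$ une fl\`{e}che $(s':X\rightarrow X'')\rightarrow(s'':X\rightarrow Y)$, ce qui domine les deux objets de d\'{e}part.

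\emph{\'{E}galisation.} Soient enfin deux fl\`{e}ches parall\`{e}les $h_{1},h_{2}:(s:X\rightarrow X')\rightarrow(s':X\rightarrow X'')$, c'est-\`{a}-dire $h_{1},h_{2}:X'\rightarrow X''$ telles que $h_{1}\circ s=s'=h_{2}\circ s$. En particulier $h_{1}\circ s=h_{2}\circ s$ avec $s\in S$, ce qui est pr\'{e}cis\'{e}ment l'hypoth\`{e}se de l'axiome (4); celui-ci livre $t\in S$, $t:X''\rightarrow Z$, tel que $t\circ h_{1}=t\circ h_{2}$. En posant $s'':=t\circ s'$ (qui est dans $S$ par l'axiome (2)), la fl\`{e}che $t$ devient un morphisme $(s':X\rightarrow X'')\rightarrow(s'':X\rightarrow Z)$ dans $S^{X}$ \'{e}galisant $h_{1}$ et $h_{2}$.

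Cette preuve n'est qu'une transcription des axiomes (1)--(4) dans le vocabulaire des cat\'{e}gories filtrantes et ne pr\'{e}sente pas de difficult\'{e} profonde. Le seul point d\'{e}licat, qui constitue en r\'{e}alit\'{e} le point central de l'argument, est de s'assurer \`{a} chaque \'{e}tape que la cible construite reste dans $S$: cela repose syst\'{e}matiquement sur l'axiome (2) appliqu\'{e} au compos\'{e} d'un \'{e}l\'{e}ment de $S$ produit par (3) ou (4) avec l'objet de $S^{X}$ de d\'{e}part.
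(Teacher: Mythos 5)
Votre preuve est correcte : la v\'{e}rification des trois conditions d'une cat\'{e}gorie filtrante (non-vacuit\'{e} via l'axiome (1), co-dominance via l'axiome de Ore (3) combin\'{e} \`{a} la stabilit\'{e} par composition (2), \'{e}galisation via l'axiome (4) et \`{a} nouveau (2)) est exactement l'argument attendu. Le texte lui-m\^{e}me ne donne aucune d\'{e}monstration --- il d\'{e}clare la preuve \og facile et laiss\'{e}e en exercice \fg{} --- de sorte que votre r\'{e}daction comble pr\'{e}cis\'{e}ment cette lacune par la voie standard.
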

La preuve est facile et laiss\'{e}e en exercice.

\begin{definition}
Pour S un syst\`{e}me multiplicatif \`{a} droite et X,Y dans $Ob(\mathcal{C})$.$\\$
 On consid\`{e}re la categorie $\mathcal{C}[S^{-1}]$ avec les mêmes objets que $\mathcal{C}$ et comme groupes de morphismes: $Hom_{\mathcal{C}_{S^{-1}}}(X,Y)=\varinjlim_{Y'}Hom_{\mathcal{C}}(X,Y')$.
\end{definition}$\\$

On note $Q:\mathcal{C}\rightarrow\mathcal{C}[S^{-1}]$ le foncteur canonique.

\begin{theorem}(Verdier)\\
(i) $\mathcal{C}[S^{-1}]$ est une cat\'{e}gorie additive munie d'un automorphisme T.\\
(ii) $\mathcal{C}[S^{-1}]$ est une cat\'{e}gorie triangul\'{e}e où un triangle T est distingu\'{e} si T est isomorphe \`{a} l'image par Q d'un triangle distingu\'{e} de $\mathcal{C}[S^{-1}]$.\\
(iii) La cat\'{e}gorie $\mathcal{C}[S^{-1}]$ v\'{e}rifie la propri\'{e}t\'{e} universelle suivante:\\
Pour tout foncteur $F:\mathcal{C}\rightarrow\mathcal{A}$ tel que F(s) est un isomorphisme pour s dans S, on a la factorisation suivante:
\xymatrix{ \mathcal{C}\ar[d]^{F}\ar[r]^{Q}&\mathcal{C}[S^{-1}] \\ \mathcal{A}\ar@{.>}[ur]}

\end{theorem}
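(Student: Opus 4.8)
Le cadre implicite est que $\mathcal{C}$ est triangul\'{e}e de translation $T$ et que $S$ est compatible \`{a} la triangulation (stable par $T$ et $T^{-1}$, et telle que la troisi\`{e}me fl\`{e}che d'un morphisme de triangles distingu\'{e}s puisse \^{e}tre choisie dans $S$ d\`{e}s que les deux autres le sont). Le plan est de d\'{e}rouler le calcul de fractions \`{a} gauche classique. Je v\'{e}rifierais d'abord que la formule $\operatorname{Hom}_{\mathcal{C}[S^{-1}]}(X,Y)=\varinjlim_{(t\colon Y\to Y')\in S^{Y}}\operatorname{Hom}_{\mathcal{C}}(X,Y')$ d\'{e}finit bien une cat\'{e}gorie. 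Le caract\`{e}re filtrant de $S^{Y}$ (Proposition pr\'{e}c\'{e}dente) assure que la colimite se comporte bien, et un morphisme $X\to Y$ est une fraction \`{a} gauche $t^{-1}f$ avec $f\colon X\to Y'$ et $t\colon Y\to Y'$ dans $S$. Pour composer $t^{-1}f\colon X\to Y$ avec $u^{-1}g\colon Y\to Z$ (o\`{u} $g\colon Y\to Z'$ et $u\colon Z\to Z'$ sont dans $S$), j'appliquerais l'axiome (3) au couple $g,t$ pour obtenir $t'\colon Z'\to W$ dans $S$ et $h\colon Y'\to W$ avec $h\circ t=t'\circ g$; alors $t'\circ u\in S$ par l'axiome (2) et le compos\'{e} est $(t'u)^{-1}(hf)$. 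L'ind\'{e}pendance vis-\`{a}-vis des repr\'{e}sentants et l'associativit\'{e} se v\'{e}rifient par chasse au diagramme \`{a} l'aide du caract\`{e}re filtrant et de l'axiome (4); je les traiterais comme routine.

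Pour la propri\'{e}t\'{e} universelle (iii), \'{e}tant donn\'{e} $F\colon\mathcal{C}\to\mathcal{A}$ inversant tout $s\in S$, je poserais $\bar F(X)=F(X)$ et $\bar F(t^{-1}f)=F(t)^{-1}F(f)$, ce qui a un sens car $F(t)$ est inversible. On v\'{e}rifie que $\bar F$ est bien d\'{e}fini sur les classes, fonctoriel, et satisfait $\bar F\circ Q=F$. L'unicit\'{e} vient de ce que $Q$ est l'identit\'{e} sur les objets, que tout morphisme s'\'{e}crit $Q(t)^{-1}Q(f)$, et que tout foncteur compatible \`{a} $F$ est forc\'{e} de l'envoyer sur $F(t)^{-1}F(f)$; la pleine fid\'{e}lit\'{e} de (c) emballe ceci en l'isomorphisme de cat\'{e}gories de foncteurs voulu.

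Pour (i), la cat\'{e}gorie $\mathcal{C}[S^{-1}]$ a les m\^{e}mes objets que $\mathcal{C}$ et $Q$ pr\'{e}serve l'objet nul et les biproduits finis, de sorte que $X\oplus Y$ reste un biproduit; les ensembles de morphismes, \'{e}tant des colimites filtrantes de groupes ab\'{e}liens, sont des groupes ab\'{e}liens et la composition est bilin\'{e}aire, d'o\`{u} l'additivit\'{e}. Pour l'automorphisme, la stabilit\'{e} de $S$ par $T$ et $T^{-1}$ fait que $Q\circ T$ et $Q\circ T^{-1}$ inversent $S$, donc par (iii) ils se factorisent en des endofoncteurs $\bar T,\bar T^{-1}$ de $\mathcal{C}[S^{-1}]$, mutuellement inverses, ce qui fournit l'automorphisme $T$.

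Pour la triangulation (ii), je d\'{e}clarerais un triangle distingu\'{e} lorsqu'il est isomorphe dans $\mathcal{C}[S^{-1}]$ \`{a} l'image par $Q$ d'un triangle distingu\'{e} de $\mathcal{C}$, puis je v\'{e}rifierais TR1--TR4. Le c\oe ur technique est un lemme de rel\`{e}vement: tout morphisme $\varphi$ de $\mathcal{C}[S^{-1}]$ est isomorphe, dans la cat\'{e}gorie des fl\`{e}ches, \`{a} $Q(f)$ pour un vrai morphisme $f$ de $\mathcal{C}$ (on remplace la fraction par son num\'{e}rateur apr\`{e}s l'isomorphisme $Q(t)$), et plus g\'{e}n\'{e}ralement un carr\'{e} commutatif de $\mathcal{C}[S^{-1}]$ se rel\`{e}ve en un carr\'{e} de $\mathcal{C}$ qui commute exactement apr\`{e}s agrandissement le long de $S$, via les axiomes (3)--(4) et le caract\`{e}re filtrant. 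Avec cela, TR1 et TR2 se ram\`{e}nent aussit\^{o}t aux faits correspondants dans $\mathcal{C}$, TR3 suit en relevant le carr\'{e} et en le compl\'{e}tant dans $\mathcal{C}$, et TR4 se ram\`{e}ne \`{a} l'axiome de l'octa\`{e}dre dans $\mathcal{C}$. Je m'attends \`{a} ce que TR4, conjointement au lemme de rel\`{e}vement sous sa forme pour les carr\'{e}s, soit l'obstacle principal: contr\^{o}ler la part $S$ des diagrammes relev\'{e}s pour que les images des triangles distingu\'{e}s restent distingu\'{e}es est exactement l\`{a} o\`{u} la compatibilit\'{e} de $S$ \`{a} la triangulation --- stabilit\'{e} par $T^{\pm1}$ et propri\'{e}t\'{e} de la troisi\`{e}me fl\`{e}che dans $S$ --- fait le vrai travail.
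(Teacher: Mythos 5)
Le texte ne d\'{e}montre pas ce th\'{e}or\`{e}me : il l'\'{e}nonce comme r\'{e}sultat de Verdier, en renvoyant implicitement \`{a} [29], [30], et les d\'{e}finitions qui le pr\'{e}c\`{e}dent (syst\`{e}me multiplicatif \`{a} droite, cat\'{e}gorie $S^{Y}$ filtrante, $Hom=\varinjlim$) sont celles de Kashiwara--Schapira [17]. Votre proposition ne peut donc \^{e}tre compar\'{e}e \`{a} aucune preuve du texte; jug\'{e}e sur pi\`{e}ces, elle suit correctement la d\'{e}marche classique. Deux points en votre faveur : vous r\'{e}tablissez les hypoth\`{e}ses que l'\'{e}nonc\'{e} omet --- pour (ii) il faut que $\mathcal{C}$ soit triangul\'{e}e et que $S$ soit compatible \`{a} la triangulation (stabilit\'{e} par $T^{\pm1}$, troisi\`{e}me fl\`{e}che d'un morphisme de triangles dans $S$) --- et vous corrigez tacitement la coquille de (ii) : un triangle est distingu\'{e} s'il est isomorphe \`{a} l'image par $Q$ d'un triangle distingu\'{e} de $\mathcal{C}$, et non de $\mathcal{C}[S^{-1}]$. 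Votre sch\'{e}ma --- fractions \`{a} gauche index\'{e}es par la cat\'{e}gorie filtrante $S^{Y}$, composition via l'axiome (3), propri\'{e}t\'{e} universelle par $\bar{F}(t^{-1}f)=F(t)^{-1}F(f)$, additivit\'{e} par colimites filtrantes de groupes ab\'{e}liens, automorphisme obtenu en factorisant $Q\circ T^{\pm1}$ par la propri\'{e}t\'{e} universelle, puis TR1--TR4 par rel\`{e}vement --- est exactement celui de la preuve de Verdier. Notez toutefois que les \'{e}tapes que vous diff\'{e}rez comme routine (ind\'{e}pendance des repr\'{e}sentants, associativit\'{e}, TR3 et l'octa\`{e}dre TR4) constituent l'essentiel du contenu technique; dans une r\'{e}daction compl\`{e}te il faudrait en particulier \'{e}crire le lemme de rel\`{e}vement des carr\'{e}s commutatifs, seul endroit o\`{u} la compatibilit\'{e} de $S$ \`{a} la triangulation intervient r\'{e}ellement, comme vous le signalez d'ailleurs vous-m\^{e}me.
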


\subsection{Cat\'{e}gorie triangul\'{e}e de motifs mixtes g\'{e}om\'{e}triques} $\\$
On consid\`{e}re $K^{b}({SmCor/k})$ la cat\'{e}gorie homotopique born\'{e}e. On va localiser par rapport \`{a} une certaine classe de complexes pour avoir la propri\'{e}t\'{e} de Mayer-Vietoris et l'invariance homotopique. On consid\`{e}re donc la classe T de complexes de la forme:$\\$

\begin{center}
(1) $[X\times \mathbb{A}^{1}]\rightarrow[X]$ pour X lisse s\'{e}par\'{e} sur k.
\end{center}
 
\begin{center}
(2) $[U\cap V]\stackrel{j_{U}\oplus j_{V}}{\rightarrow}[U]\oplus[V]\stackrel{j_{U}-j_{V}}{\rightarrow}[X]$
\end{center}
 avec U, V recouvrement ouvert de X comme ci-dessus.$\\$

On note alors $DM_{gm}^{eff}(k)$ la cat\'{e}gorie obtenue en quotientant $K^{b}({SmCor/k})$ par la sous cat\'{e}gorie \'{e}paisse $\bar{T}$ minimale contenant T et en prenant ensuite l'enveloppe pseudo-ab\'{e}lienne. On remarquera qu'en ce qui concerne la structure triangul\'{e}e, il n'est pas \'{e}vident a priori que l'enveloppe pseudo-ab\'{e}lienne le soit aussi, mais c'est pourtant vrai d'apr\`{e}s  Balmer-Schlichting [3]. On note $M_{gm}$ le foncteur de Sm/k vers $DM_{gm}^{eff}(k)$.
On a le lemme trivial suivant:
\begin{lemma} Dans $DM_{gm}^{eff}(k)$, on a le triangle distingu\'{e} suivant dit de Mayer-Vietoris:$\\$

\begin{center}
(1) $M_{gm}([U\cap V])\stackrel{j_{U}\oplus j_{V}}{\rightarrow}M_{gm}([U]\oplus[V])\stackrel{j_{U}-j_{V}}{\rightarrow}M_{gm}([X])\rightarrow M_{gm}([U\cap V][1])$.
\end{center}
\end{lemma}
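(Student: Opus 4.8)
Le plan est d'exhiber ce triangle comme l'image, par le foncteur de localisation $Q:K^{b}(SmCor/k)\rightarrow DM_{gm}^{eff}(k)$, d'un triangle distingu\'{e} construit \`{a} la main dans $K^{b}(SmCor/k)$ \`{a} partir d'un c\^{o}ne, en exploitant le fait que tout objet de la sous-cat\'{e}gorie \'{e}paisse $\bar{T}$ devient nul. Rappelons d'abord que, par le th\'{e}or\`{e}me de Verdier, $Q$ est un foncteur triangul\'{e}: il transforme les triangles distingu\'{e}s en triangles distingu\'{e}s, et $Q(s)$ est un isomorphisme d\`{e}s que le c\^{o}ne de $s$ appartient \`{a} $\bar{T}$. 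On identifiera $M_{gm}(X)$ \`{a} $Q([X])$, et l'on notera que $Q$ \'{e}tant additif, $Q([U]\oplus[V])=M_{gm}(U)\oplus M_{gm}(V)$.

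Je commencerais par consid\'{e}rer le morphisme de complexes (concentr\'{e}s en degr\'{e} $0$) $\psi:=j_{U}\oplus j_{V}:[U\cap V]\rightarrow [U]\oplus[V]$, ainsi que le triangle distingu\'{e} canonique associ\'{e} \`{a} son c\^{o}ne dans $K^{b}(SmCor/k)$:
\begin{center}
$[U\cap V]\stackrel{\psi}{\rightarrow}[U]\oplus[V]\rightarrow \mathrm{Cone}(\psi)\rightarrow [U\cap V][1]$.
\end{center}
Le complexe $\mathrm{Cone}(\psi)$ est $[U\cap V]\stackrel{\psi}{\rightarrow}[U]\oplus[V]$ plac\'{e} en degr\'{e}s $-1$ et $0$. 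J'introduirais ensuite le morphisme $\alpha:\mathrm{Cone}(\psi)\rightarrow [X]$, nul sur la composante de degr\'{e} $-1$ et \'{e}gal \`{a} $j_{U}-j_{V}$ sur celle de degr\'{e} $0$. La condition de morphisme de complexes se ram\`{e}ne \`{a} l'\'{e}galit\'{e} $(j_{U}-j_{V})\circ(j_{U}\oplus j_{V})=0$, imm\'{e}diate puisque les deux inclusions compos\'{e}es $U\cap V\hookrightarrow U\hookrightarrow X$ et $U\cap V\hookrightarrow V\hookrightarrow X$ co\"{\i}ncident.

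Le c\oe{}ur de l'argument est le calcul du c\^{o}ne de $\alpha$. Avec les conventions usuelles, $\mathrm{Cone}(\alpha)$ est exactement le complexe \`{a} trois termes
\begin{center}
$[U\cap V]\stackrel{j_{U}\oplus j_{V}}{\rightarrow}[U]\oplus[V]\stackrel{j_{U}-j_{V}}{\rightarrow}[X]$
\end{center}
plac\'{e} en degr\'{e}s $-2,-1,0$, c'est-\`{a}-dire, au d\'{e}calage pr\`{e}s, un complexe de la classe $T$ (ligne (2) de la d\'{e}finition de $T$), donc un objet de $\bar{T}$. Il en r\'{e}sulte que $Q(\mathrm{Cone}(\alpha))=0$, et donc que $Q(\alpha)$ est un isomorphisme $Q(\mathrm{Cone}(\psi))\xrightarrow{\sim}M_{gm}(X)$ dans $DM_{gm}^{eff}(k)$.

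Il ne reste plus qu'\`{a} appliquer $Q$ au triangle distingu\'{e} ci-dessus, ce qui fournit le triangle distingu\'{e}
\begin{center}
$M_{gm}(U\cap V)\stackrel{\psi}{\rightarrow}M_{gm}(U)\oplus M_{gm}(V)\rightarrow Q(\mathrm{Cone}(\psi))\rightarrow M_{gm}(U\cap V)[1]$,
\end{center}
puis \`{a} remplacer $Q(\mathrm{Cone}(\psi))$ par $M_{gm}(X)$ gr\^{a}ce \`{a} l'isomorphisme $Q(\alpha)$, le troisi\`{e}me morphisme devenant alors $j_{U}-j_{V}$. On obtient ainsi le triangle de Mayer-Vietoris annonc\'{e}. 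La seule subtilit\'{e}, purement formelle --- ce qui justifie l'\'{e}pith\`{e}te \og trivial\fg{} ---, r\'{e}side dans le contr\^{o}le des signes dans les diff\'{e}rentielles du c\^{o}ne et dans l'identification de $\mathrm{Cone}(\alpha)$ avec l'objet correspondant de $T$; le passage \`{a} l'enveloppe pseudo-ab\'{e}lienne ne cr\'{e}e aucune difficult\'{e}, celle-ci restant triangul\'{e}e d'apr\`{e}s Balmer-Schlichting et $M_{gm}$ se factorisant par $Q$.
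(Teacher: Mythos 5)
Votre preuve est correcte et suit essentiellement la m\^{e}me voie que le papier : celui-ci qualifie le lemme de trivial et n'en donne aucune d\'{e}monstration, pr\'{e}cis\'{e}ment parce que le complexe $[U\cap V]\rightarrow[U]\oplus[V]\rightarrow[X]$ figure dans la classe $T$ d\'{e}finissant la localisation, de sorte que son annulation dans le quotient de Verdier identifie le c\^{o}ne de $j_{U}\oplus j_{V}$ \`{a} $M_{gm}([X])$ --- c'est exactement votre argument. Les v\'{e}rifications que vous explicitez (la nullit\'{e} de $(j_{U}-j_{V})\circ(j_{U}\oplus j_{V})$, les signes du c\^{o}ne, la compatibilit\'{e} avec l'enveloppe pseudo-ab\'{e}lienne via Balmer-Schlichting) sont les seuls d\'{e}tails que le papier laisse au lecteur.
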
$\\$

La structure tensorielle sur SmCor/k s'\'{e}tend naturellement \`{a} $K^{b}({SmCor/k})$ et descend \`{a} $DM_{gm}^{eff}(k)$ par propri\'{e}t\'{e} universelle de la localisation. On a donc une structure naturelle de  cat\'{e}gorie monoïdale sym\'{e}trique sur $DM_{gm}^{eff}(k)$ et qui fait de $M_{gm}$ un foncteur tensoriel. $\\$
On notera donc au passage, la remarquable identit\'{e} dite de Künneth:$\\$ 
\begin{center}
$M_{gm}([X])\otimes M_{gm}([Y])=M_{gm}([X\times Y])$.
\end{center}
$\\$
On a l'objet unit\'{e} $M_{gm}(Spec(k))$ que l'on notera $\mathbb{Z}$. $\\$On pose $ \mathbb{Z}(1):=\tilde{M}_{gm}(\mathbb{P}^{1})[-2]$ o\`{u} $\tilde{M}_{gm}(X)$ d\'{e}signe le motif r\'{e}duit de X repr\'{e}sent\'{e} par\\ $[X]\rightarrow[Spec(k)]$ dans $K^{b}({SmCor/k})$. On l'appelle le motif de Tate  et on note $\mathbb{Z}(n)$ sa n-i\`{e}me puissance tensorielle. De m\^{e}me, on pose $A(n)=A\otimes \mathbb{Z}(n)$.$\\$

Pour obtenir la d\'{e}finition des motifs  sur k, il ne nous reste plus qu'\`{a} inverser le motif de Tate. On d\'{e}finit donc $DM_{gm}(k)$ la cat\'{e}gorie dont les objets sont les couples (A,n) avec A dans $DM_{gm}^{eff}(k)$ et $n\in \mathbb{Z}$. $\\$
De plus, $Hom((A,n),(B,m))= \varinjlim_{k}Hom_{DM_{gm}^{eff}(k)}(A(k+n),B(k+m))$.\\

Cette fois-ci la structure tensorielle sur $DM_{gm}(k)$ est moins \'{e}vidente et il est faux en g\'{e}n\'{e}ral que l'on conserve la structure tensorielle en inversant un objet Q, cependant c'est effectivement le cas si l'involution de permutation sur $Q\otimes Q$ est l'identit\'{e}. Pour obtenir ce r\'{e}sultat, on rappelle certaines propri\'{e}t\'{e}s des motifs purs.

\begin{definition} On note $Chow^{eff}(k)$ la cat\'{e}gorie des motifs de Chow dont les objets sont les k-sch\'{e}mas lisses projectifs et les morphismes donn\'{e}s par $CH^{dim X}(X\times Y)$ cycles de dimension d dans $X\times Y$(si X connexe sinon, faire la somme sur les composantes connexes) avec comme loi de composition la même que  celle des correspondances finies. On note $Chow^{eff}(k)$, l'enveloppe pseudo-ab\'{e}lienne. On note Chow le foncteur de SmProj/k vers $Chow^{eff}(k)$.
\end{definition}

\begin{proposition} On a le diagramme commutatif suivant:$\\$
\begin{center}
\xymatrix{SmProj/k\ar[r]\ar[d]^{Chow}& Sm/k \ar[d]^{M_{gm}}\\Chow^{eff}(k)\ar[r]&DM_{gm}^{eff}(k)}
\end{center}

\end{proposition}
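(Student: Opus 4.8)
La preuve se scinde en deux t\^{a}ches de nature tr\`{e}s in\'{e}gale : construire le foncteur horizontal inf\'{e}rieur $\Phi : Chow^{eff}(k) \to DM_{gm}^{eff}(k)$, puis constater la commutativit\'{e} du carr\'{e}. Au niveau des objets il n'y a rien \`{a} d\'{e}montrer : le foncteur $Chow$ et l'inclusion $\iota : SmProj/k \hookrightarrow Sm/k$ envoient tous deux $X$ sur $X$, et l'on pose simplement $\Phi(X) := M_{gm}(X)$, de sorte que $\Phi \circ Chow$ et $M_{gm} \circ \iota$ co\"{\i}ncident sur les objets. Toute la substance r\'{e}side donc dans la d\'{e}finition de $\Phi$ sur les morphismes, c'est-\`{a}-dire dans la construction, pour $X$ et $Y$ lisses projectifs avec $\dim X = n$, d'un homomorphisme
\[
CH_{n}(X\times Y) \longrightarrow Hom_{DM_{gm}^{eff}(k)}\bigl(M_{gm}(X), M_{gm}(Y)\bigr)
\]
compatible avec le produit de composition des correspondances.

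Je commencerais par traiter le sous-groupe $Cor(X,Y)$ des correspondances finies (cycles de dimension $n$ finis surjectifs sur $X$), o\`{u} tout est \'{e}l\'{e}mentaire et fournit d\'{e}j\`{a} la commutativit\'{e}. Une correspondance finie est un morphisme de $SmCor/k$, donc d\'{e}finit tautologiquement un morphisme $M_{gm}(X) \to M_{gm}(Y)$ par le foncteur canonique $SmCor/k \to K^{b}(SmCor/k) \to DM_{gm}^{eff}(k)$. Il reste \`{a} voir que cette association passe \`{a} l'\'{e}quivalence rationnelle : si $Z_{0}, Z_{1} \in Cor(X,Y)$ sont reli\'{e}es par une correspondance finie $W \in Cor(X\times\mathbb{A}^{1}, Y)$ telle que $Z_{j} = W\circ\Gamma_{i_{j}}$, o\`{u} $i_{0}, i_{1} : X \to X\times\mathbb{A}^{1}$ sont les deux sections, alors l'invariance par homotopie impos\'{e}e dans la localisation (l'inversion des complexes $[X\times\mathbb{A}^{1}]\to[X]$) rend la projection $\pi$ inversible ; comme $\pi\circ i_{0}=\pi\circ i_{1}=id$, on en d\'{e}duit $M_{gm}(i_{0}) = M_{gm}(i_{1})$, d'o\`{u} $M_{gm}(Z_{0}) = M_{gm}(Z_{1})$. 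Ceci d\'{e}finit $\Phi$ sur l'image des correspondances finies.

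L'obstacle principal est que cette image n'\'{e}puise pas $CH_{n}(X\times Y)$ : les cycles de dimension $n$ non dominants sur $X$ (contract\'{e}s par la projection $p_{1}$) n'ont aucun repr\'{e}sentant fini surjectif sur $X$. Pour d\'{e}finir $\Phi$ sur tout $CH_{n}(X\times Y)$ et identifier compl\`{e}tement le groupe de morphismes, je passerais dans $DM_{gm}(k)$ et j'invoquerais la dualit\'{e} de Poincar\'{e} de Voevodsky : pour $X$ lisse projectif de dimension $n$, $M_{gm}(X)$ est fortement dualisable de dual $M_{gm}(X)(-n)[-2n]$, ce qui donne
\[
Hom_{DM_{gm}}\bigl(M_{gm}(X), M_{gm}(Y)\bigr) \cong Hom_{DM_{gm}}\bigl(\mathbb{Z}, M_{gm}(X\times Y)(-n)[-2n]\bigr),
\]
membre de droite que l'on calcule comme $CH_{n}(X\times Y)$ gr\^{a}ce \`{a} l'identification des groupes de morphismes de ce type avec les groupes de Chow (on peut aussi invoquer directement le th\'{e}or\`{e}me de plongement de [32]). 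La pleine fid\'{e}lit\'{e} du foncteur $DM_{gm}^{eff}(k) \to DM_{gm}(k)$ (th\'{e}or\`{e}me de simplification de Voevodsky) assure que ce calcul men\'{e} dans $DM_{gm}$ rend bien les morphismes effectifs cherch\'{e}s. Le point d\'{e}licat, qui constitue le c\oe ur de la d\'{e}monstration, est de v\'{e}rifier que cet isomorphisme transforme la composition des correspondances $p_{13*}(p_{12}^{*}(\cdot)\cdot p_{23}^{*}(\cdot))$ en la composition des morphismes de motifs ; cela d\'{e}coule de la compatibilit\'{e} de la dualit\'{e} aux produits et de la formule de projection, mais demande un soin r\'{e}el et c'est l\`{a} que toute la difficult\'{e} se concentre.

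Une fois $\Phi$ construit et sa fonctorialit\'{e} \'{e}tablie, la commutativit\'{e} du carr\'{e} est imm\'{e}diate et ne touche que les graphes. Pour $f : X \to Y$ dans $SmProj/k$, $Chow(f)$ est la classe rationnelle du graphe $\Gamma_{f} \subset X\times Y$ ; comme la projection $\Gamma_{f} \to X$ est un isomorphisme, $\Gamma_{f}$ est fini surjectif sur $X$, donc c'est une correspondance finie. Sous l'identification pr\'{e}c\'{e}dente, $\Phi([\Gamma_{f}])$ est donc l'image de $\Gamma_{f}$ par $SmCor/k \to DM_{gm}^{eff}(k)$, c'est-\`{a}-dire $M_{gm}(f)$, compte tenu de ce que le foncteur $\Gamma : Sm/k \to SmCor/k$ envoie $f$ sur $\Gamma_{f}$. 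On a ainsi $\Phi \circ Chow = M_{gm} \circ \iota$ sur les morphismes, ce qui ach\`{e}ve la v\'{e}rification ; la difficult\'{e} r\'{e}elle r\'{e}sidait enti\`{e}rement dans la construction fonctorielle de $\Phi$, la commutativit\'{e} n'\'{e}tant qu'une reformulation du fait que $M_{gm}$ se factorise d\'{e}j\`{a} par les correspondances finies.
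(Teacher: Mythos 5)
Your first half coincides with the paper's proof: on objects there is nothing to check, and your observation that a finite correspondence tautologically yields a morphism $M_{gm}(X)\rightarrow M_{gm}(Y)$ depending only on its class modulo the $\mathbb{A}^{1}$-relation is exactly the paper's factorization of the evident map $Cor(X,Y)\rightarrow Hom_{DM_{gm}^{eff}(k)}(M_{gm}(X),M_{gm}(Y))$ through $h_{0}(X,Y)$, the cokernel of $Cor(X\times\mathbb{A}^{1},Y)\rightarrow Cor(X,Y)$. Where you diverge is in handling the ``obstacle'' that a dimension-$\dim X$ cycle in $X\times Y$ need not be finite surjective over $X$. The paper disposes of this in one line by citing Friedlander--Voevodsky [11]: for $X$ projective the cycle-class map induces an isomorphism $h_{0}(X,Y)\cong CH^{\dim X}(X\times Y)$. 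This is a moving-lemma statement: every rational equivalence class contains a finite correspondence, and rational equivalence between finite correspondences is generated by the $\mathbb{A}^{1}$-relation. Your obstacle concerns individual cycles, not their classes, and that is precisely what the cited result resolves; the map on morphisms is then the inverse of this isomorphism followed by the factorized map, and compatibility with composition is essentially automatic because the cycle-class map and the functor $SmCor/k\rightarrow DM_{gm}^{eff}(k)$ both respect the common composition formula $p_{13*}(p_{12}^{*}(\cdot).p_{23}^{*}(\cdot))$.

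Your detour through $DM_{gm}(k)$ is, by contrast, circular in the context of this paper. You invoke the tensor structure of $DM_{gm}(k)$, the strong dualizability $M_{gm}(X)^{\vee}\cong M_{gm}(X)(-n)[-2n]$, the cancellation theorem, and the identification of $Hom_{DM_{gm}}(\mathbb{Z},M_{gm}(X\times Y)(-n)[-2n])$ with $CH_{n}(X\times Y)$. But the proposition under discussion is proved precisely in order to deduce, from the corresponding fact for Chow motives, that the permutation involution on $\mathbb{Z}(1)^{\otimes 2}$ is the identity --- which is what makes $DM_{gm}(k)$ a tensor triangulated category in the first place (this is the corollary stated immediately after the proposition). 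Poincar\'{e} duality moreover requires resolution of singularities, whereas the proposition holds over any field, and the computation of Hom groups out of $\mathbb{Z}$ in terms of Chow groups is itself a theorem of [11] at least as deep as the moving lemma the paper cites. Finally, you yourself identify the compatibility of your duality isomorphism with composition of correspondences as the heart of the argument and leave it unverified, so even granting all the machinery the proof is incomplete at its decisive step. The repair is simply to replace the duality argument by the citation of [11] for $h_{0}(X,Y)\cong CH^{\dim X}(X\times Y)$.
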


\begin{proof} Il faut montrer que pour X, Y des vari\'{e}t\'{e}s projectives lisses,  il y a un morphisme canonique:$\\$
\begin{center}
$CH^{dim X}(X\times Y)\rightarrow Hom_{DM_{gm}^{eff}(k)}({M}_{gm}(X),{M}_{gm}(Y))$.
\end{center}$\\$
On note $h_{0}(X,Y)$ le conoyau du morphisme $Cor(X\times \mathbb{A}^{1},Y)\rightarrow Cor(X,Y)$ donn\'{e} par la diff\'{e}rence des restrictions de $X\times{0}$ et $X\times{1}$. $\\$On a alors que le morphisme \'{e}vident $Cor(X,Y)\rightarrow Hom_{DM_{gm}^{eff}(k)}({M}_{gm}(X),{M}_{gm}(Y))$ se factorise par $h_{0}(X,Y)$.
Or, on a d'apr\`{e}s [11] un morphisme canonique $Cor(X,Y)\rightarrow CH^{dim X}(X\times Y)$ qui est un isomorphisme.
\end{proof}

On d\'{e}duit donc de la proposition ci-dessus et du fait correspondant dans les motifs de Chow que l'involution de permutation est l'identit\'{e}. On a donc obtenu le corollaire suivant.
$\\$
$DM_{gm}(k)$ est une cat\'{e}gorie triangul\'{e}e tensorielle sur k.

On a la th\'{e}or\`{e}me naturel sur $DM_{gm}^{eff}(k)$:

\begin{theorem}(Voevodsky, [32],4.3.1)\\
(i) Pour tous M, N dans $DM_{gm}^{eff}(k)$, l'homomorphisme canonique:

\begin{center}
$Hom_{DM_{gm}^{eff}(k)}(M,N)\rightarrow Hom_{DM_{gm}^{eff}(k)}(M(1),N(1))$
\end{center}

est un isomorphisme.$\\$

(ii) Le foncteur $i:DM_{gm}^{eff}(k)\rightarrow DM_{gm}(k)$ est un foncteur pleinement fid\`{e}le.
\end{theorem}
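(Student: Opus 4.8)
Il s'agit de prouver le théorème 4.3.1 de Voevodsky, en deux parties. La partie (i) affirme l'annulation de la torsion de Tate : le foncteur de twist $-\otimes\mathbb{Z}(1)$ induit un isomorphisme sur les Hom dans $DM_{gm}^{eff}(k)$. La partie (ii) en est une conséquence formelle : le foncteur canonique $i$ vers la catégorie où l'on a inversé $\mathbb{Z}(1)$ est pleinement fidèle.

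The plan is to deduce (ii) formally from (i), and to prove (i) --- the cancellation theorem --- by passing to the larger category of motivic complexes and analysing the Tate twist through a contraction functor. The weight of the argument falls entirely on (i): once it is available, (ii) is a one-line consequence. Indeed, by definition $\mathrm{Hom}_{DM_{gm}(k)}((A,n),(B,m))=\varinjlim_{k}\mathrm{Hom}_{DM_{gm}^{eff}(k)}(A(k+n),B(k+m))$, and the transition maps of this inductive system are precisely the twist maps $-\otimes\mathbb{Z}(1)$. If (i) holds, every transition map is an isomorphism, the colimit is constant, and it coincides with the term computing the image of $i$; hence $i$ is fully faithful.

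For (i), I would first embed $DM_{gm}^{eff}(k)$ into the triangulated category $DM^{eff}_{-}(k)$ of bounded-above complexes of homotopy-invariant Nisnevich sheaves with transfers, built from the sheafification $a_{Nis}$ of the previous section and the Suslin complex $C_{*}$. In characteristic zero, resolution of singularities makes this embedding fully faithful, so it suffices to prove cancellation inside $DM^{eff}_{-}(k)$. Under the embedding, the Tate object acquires a concrete model: $\mathbb{Z}(1)[1]$ is identified with $C_{*}\tilde{\mathbb{Z}}_{tr}(\mathbb{G}_{m})$, the reduced motive of the multiplicative group pointed at $1$.

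The conceptual core is to produce a right adjoint to the twist $-\otimes\mathbb{Z}(1)[1]$. For a sheaf with transfers $F$ I would introduce its contraction $F_{-1}$, the reduced summand of $U\mapsto F(U\times\mathbb{G}_{m})$ split off by the unit section; equivalently $F_{-1}=\underline{\mathrm{Hom}}(\tilde{\mathbb{Z}}_{tr}(\mathbb{G}_{m}),F)$. One checks that $F\mapsto F_{-1}$ is exact, preserves homotopy invariance, and commutes with $C_{*}$, so it descends to an endofunctor of $DM^{eff}_{-}(k)$ right adjoint to $-\otimes\mathbb{Z}(1)[1]$. With this adjunction in hand, cancellation is reduced to showing that the unit $M\to (M(1)[1])_{-1}$ is an isomorphism, i.e. that $(M(1))_{-1}\cong M$ naturally in $M$ (the shift $[1]$ being invertible does not affect the statement).

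The hard part, and the genuine obstacle, is exactly this last identification: it is not formal. It demands a hands-on study of finite correspondences into the iterated products $X\times\mathbb{G}_{m}^{\times n}$ together with the explicit construction of contracting homotopies at the level of cycles --- a moving-lemma argument deforming correspondences off the offending diagonal strata so that twisting and then contracting is homotopic to the identity. This is the substance of Voevodsky's cancellation theorem in [32], and I would follow his explicit homotopy formulas to conclude. Once $(M(1))_{-1}\cong M$ is secured, the adjunction converts it into the isomorphism asserted in (i), and (ii) follows formally as above.
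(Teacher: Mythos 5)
The paper offers no proof of this statement: it is quoted from Voevodsky ([32], 4.3.1) and used as a black box, so your proposal has to be measured against Voevodsky's arguments rather than against anything internal to the paper. Your formal reductions are correct: (ii) does follow from (i) exactly as you say, since $Hom_{DM_{gm}(k)}((A,n),(B,m))$ is by construction a filtered colimit along the twist maps; and recasting (i) as the assertion that the unit of the adjunction between $-\otimes\mathbb{Z}(1)[1]\simeq -\otimes C_{*}\tilde{\mathbb{Z}}_{tr}(\mathbb{G}_{m})$ and the contraction $F\mapsto\underline{Hom}(\tilde{\mathbb{Z}}_{tr}(\mathbb{G}_{m}),F)$ is an isomorphism is a legitimate and standard strategy. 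Two corrections, however. First, a misattribution: the proof you sketch (embedding into $DM_{-}^{eff}(k)$, contraction functor, explicit contracting homotopies on finite correspondences into $X\times\mathbb{G}_{m}^{\times n}$) is \emph{not} the proof of Theorem 4.3.1 of [32], which you say you are following. In [32], cancellation is deduced, under resolution of singularities, from the identification of the relevant Hom groups with bivariant cycle cohomology and the duality theorem of Friedlander--Voevodsky ([11]); the explicit-homotopy argument you describe is Voevodsky's later \emph{Cancellation theorem} (Doc.\ Math.\ 2010), whose virtue is precisely that it works over any perfect field without resolution of singularities. Either route is valid, but they are distinct proofs and your text conflates them. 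Second, the full faithfulness of the embedding $DM_{gm}^{eff}(k)\hookrightarrow DM_{-}^{eff}(k)$ (Theorem 3.2.6 of [32]) requires only that $k$ be perfect, not resolution of singularities; in [32] resolution enters in the cancellation step itself, not in the embedding. Finally, be clear about the status of what you have written: the hard core --- the homotopy formulas, or alternatively the duality theorem --- is deferred to Voevodsky, so your proposal is a correct reduction plus a citation rather than a self-contained proof; that is a fair stopping point here, since the paper itself treats the entire statement as a citation.
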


On a \'{e}galement une formule pour le fibr\'{e} projectif.
\begin{proposition}
Soit X un k-schéma lisse, $\mathcal{E}$ un fibr\'{e} vectoriel sur X, on a un isomorphisme canonique dans $DM_{gm}^{eff}(k)$:$\\$
\begin{center}
$M_{gm}(\mathbb{P}(E))=\oplus_{n\geq \text{dim E}-1} M_{gm}(X)(n)[2n]$
\end{center}
\end{proposition}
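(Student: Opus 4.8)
The statement to prove is the projective bundle formula: for $X$ a smooth $k$-scheme and $\mathcal{E}$ a vector bundle on $X$ of rank $r=\dim\mathcal{E}$, there is a canonical isomorphism in $DM_{gm}^{eff}(k)$
\[
M_{gm}(\mathbb{P}(\mathcal{E}))\cong\bigoplus_{n=0}^{r-1} M_{gm}(X)(n)[2n].
\]

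The plan is to construct the map and then reduce to a Mayer--Vietoris descent argument over a trivializing cover. First I would build the morphism $\bigoplus_{n} M_{gm}(X)(n)[2n]\to M_{gm}(\mathbb{P}(\mathcal{E}))$ using the tautological line bundle $\mathcal{O}(1)$ on $\mathbb{P}(\mathcal{E})$ and its first Chern class. Concretely, one has a class $\xi\in H^2(\mathbb{P}(\mathcal{E}),\mathbb{Z}(1))$, realized motivically as a morphism $M_{gm}(\mathbb{P}(\mathcal{E}))\to\mathbb{Z}(1)[2]$, and its powers $\xi^n$ give maps $M_{gm}(\mathbb{P}(\mathcal{E}))\to\mathbb{Z}(n)[2n]$. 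Pairing these with the structural projection $\pi:\mathbb{P}(\mathcal{E})\to X$ (using the $\otimes$-structure and $M_{gm}(\pi)$) produces the components $M_{gm}(X)(n)[2n]\to M_{gm}(\mathbb{P}(\mathcal{E}))$, whose sum is the candidate isomorphism.

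To prove this map is an isomorphism, the strategy is Zariski-local descent via Mayer--Vietoris (Lemma on Mayer--Vietoris triangles above). First I would treat the split case $\mathcal{E}=\mathcal{O}_X^{\oplus r}$, where $\mathbb{P}(\mathcal{E})=X\times\mathbb{P}^{r-1}$; here the formula follows from the Künneth identity $M_{gm}(X)\otimes M_{gm}(\mathbb{P}^{r-1})=M_{gm}(X\times\mathbb{P}^{r-1})$ together with the known decomposition $M_{gm}(\mathbb{P}^{r-1})=\bigoplus_{n=0}^{r-1}\mathbb{Z}(n)[2n]$, which comes from iterating the definition $\mathbb{Z}(1)=\tilde M_{gm}(\mathbb{P}^1)[-2]$ and the elementary splitting of $\mathbb{P}^1$. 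For a general bundle, choose a Zariski cover $\{U_i\}$ of $X$ trivializing $\mathcal{E}$; restricting the bundle to intersections is compatible with the candidate map, so the five lemma applied to the morphism of Mayer--Vietoris triangles reduces the general statement to the trivial case on each piece and on their overlaps. An induction on the number of charts in the cover completes the descent.

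The main obstacle is establishing that the candidate morphism is genuinely canonical and compatible with restriction to opens, so that the Mayer--Vietoris comparison diagram actually commutes. This hinges on the functoriality of the Chern class $\xi$ — that $\mathcal{O}(1)$ pulls back correctly under the maps $\mathbb{P}(\mathcal{E}|_{U_i})\hookrightarrow\mathbb{P}(\mathcal{E})$ and that the projective bundle maps fit into a morphism of Mayer--Vietoris triangles. Verifying this naturality of the class $\xi$ and its powers within the triangulated-category formalism is the delicate point; once it is in place, the five lemma and induction are formal. In Voevodsky's treatment this is where the weight of the argument lies, and I would follow his construction of the cycle class directly in $DM_{gm}^{eff}(k)$ to secure the compatibility.
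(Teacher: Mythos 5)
Your proposal is correct and takes essentially the same route as the paper: build the comparison map from the class of $\mathcal{O}(1)$ (a morphism $M_{gm}(\mathbb{P}(\mathcal{E}))\to\mathbb{Z}(1)[2]$), take its powers, pair with the structural projection through the diagonal, and then prove it is an isomorphism by naturality in $X$ together with Mayer--Vietoris induction on a trivializing cover, reducing to the trivial bundle where the K\"unneth identity and the decomposition of $M_{gm}(\mathbb{P}^{r-1})$ over $\mathrm{Spec}(k)$ (quoted from Voevodsky, as the paper also does) conclude. The only slip is the direction of your arrows: since all your ingredients ($\xi^{n}$, $M_{gm}(\pi)$, the diagonal) are maps \emph{out of} $M_{gm}(\mathbb{P}(\mathcal{E}))$, the components you actually construct are $M_{gm}(\mathbb{P}(\mathcal{E}))\to M_{gm}(X)(n)[2n]$, not the reverse as written --- this is exactly the paper's morphism $\Sigma$, and the descent argument is unaffected.
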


\begin{proof}
On suppose $d=\text{dim E}>0$. Soit $\mathcal{O}(1)$ le faisceau inversible sur $\mathbb{P}(E)$. Par [31], Cor 3.4.3, il d\'{e}finit un morphisme $\tau_{1}:M_{gm}(\mathbb{P}(E))\rightarrow \mathbb{Z}(1)[2]$.
Pour $n\geq 0$, on pose $\tau_{n}$ la composition:\\
\begin{center}
$M_{gm}(\mathbb{P}(E))\stackrel{M_{gm}(\Delta)}{\rightarrow}M_{gm}(\mathbb{P}(E)^{n})=M_{gm}(\mathbb{P}(E)^{\otimes n})\rightarrow \mathbb{Z}(n)[2n]$.
\end{center}

Soit alors $\sigma_{n}$ la composition:\\
\begin{center}
$M_{gm}(\mathbb{P}(E))\stackrel{M_{gm}(\Delta)}{\rightarrow}M_{gm}(\mathbb{P}(E))\otimes M_{gm}(\mathbb{P}(\mathcal{E}))\stackrel{q\otimes \tau_{n}}{\rightarrow}M_{gm}(X)(n)[2n]$.
\end{center}
$\\$
o\`{u} $q:M_{gm}(\mathbb{P}(E))\rightarrow M_{gm}(X)$ induit par le morphisme stuctural.
On a alors un morphisme:$\\$
\begin{center}
$\Sigma=\oplus{\sigma_{n}}:M_{gm}(\mathbb{P}(E))\rightarrow \oplus_{n\geq \text{dim E}-1} M_{gm}(X)(n)[2n].$
\end{center}
$\\$$\\$
Montrons que c'est un isomorphisme. D'abord $\Sigma$ est fonctoriel par rapport \`{a} X. En faisant une r\'{e}currence sur le nombre d'ouverts dans un recouvrement trivialisant de E, on se ram\`{e}ne au cas o\`{u} le fibr\'{e} vectoriel est trivial. Dans ce cas, $\Sigma$ est le produit fibr\'{e} du fibr\'{e} trivial sur Spec(k) et $Id_{M_{gm}(X)}$. On a donc \`{a} le faire pour le cas du spectre d'un corps et dans ce cas-l\`{a}, c'est comme dans [32].

\end{proof}

\subsection{Extension aux vari\'{e}t\'{e}s singuli\`{e}res}$\\$
Ce paragraphe ne nous sera pas utile pour la suite, n\'{e}anmoins, il indique comment on passe de la cat\'{e}gorie des k-sch\'{e}mas lisses aux k-sch\'{e}mas quelconques  modulo la r\'{e}solution des singularit\'{e}s pour le corps de base k, ce qui concerne donc que  la caract\'{e}ristique z\'{e}ro pour le moment, et peut-être en toute caract\'{e}ristique avec les travaux r\'{e}cents de Kawanoue et Matsuki [18], [19].$\\$
L'objectif est d'\'{e}tendre le foncteur $M_{gm}:Sm/k\rightarrow DM_{gm}^{eff}(k)$ en un foncteur \\ $M_{gm}:~Var/k\rightarrow~DM_{gm}^{eff}(k)$.
Le foncteur en question v\'{e}rifiera les propri\'{e}t\'{e}s suivantes:\\

(1) $M_{gm}([X])\otimes M_{gm}([Y])=M_{gm}([X\times Y])$ (structure tensorielle).
$\\$$\\$
(2) $M_{gm}([X\times \mathbb{A}^{1}])=M_{gm}([X])$ (invariance homotopique).
$\\$$\\$
(3) $M_{gm}([U\cap V])\stackrel{j_{U}\oplus j_{V}}{\rightarrow}M_{gm}([U]\oplus[V])\stackrel{j_{U}-j_{V}}{\rightarrow}M_{gm}([X])\rightarrow M_{gm}([U\cap V][1])$ (Mayer-Vietoris).
$\\$$\\$
(4) $M_{gm}(p^{-1}(Z))\rightarrow M_{gm}(Z)\oplus M_{gm}(X_{Z})\rightarrow M_{gm}(X)\rightarrow M_{gm}(p^{-1}(Z))[1]$ (éclatement).
$\\$$\\$
(5) $M_{gm}(\mathbb{P}(\mathcal{E}))=\oplus M_{gm}([X])(n)[2n]$(formule pour le fibré projectif).
$\\$$\\$
On  a \'{e}galement des suite de Gysin, des motifs \`{a} supports compacts et une th\'{e}orie de la dualit\'{e} associ\'{e}e.

On note par $DM_{-}^{eff}(k)$ la sous-cat\'{e}gorie pleine de $D^{-}(Sh_{Nis}(SmCor/k))$ qui consiste en les complexes qui sont homotopiquement invariants.
Commençons par rappeler quelques propri\'{e}t\'{e}s g\'{e}n\'{e}rales des pr\'{e}faisceaux avec transferts. Soit $\Delta^{\bullet}$  l'objet cosimplicial standard de Sm/k. Pour tout pr\'{e}faisceau F sur Sm/k, soit $C_{*}(F)$ le complexe de pr\'{e}faisceaux de la forme $C_{n}(F)(X)=F(X\times \Delta^{n})$ avec les diff\'{e}rentielles donn\'{e}es par les sommes altern\'{e}es des morphismes qui correspondent aux cod\'{e}g\'{e}n\'{e}rescences de $\Delta^{n}$. On a facilement que si F est un pr\'{e}faisceau avec transferts pour sa topologie favorite alors il en est de même pour son complexe simplicial associ\'{e}.
On note $h_{i}^{Nis}(F)$ les faisceaux de cohomologie $H^{-i}(C_{*}(F))$.
 
 \begin{proposition}
 Pour tout faisceau avec transferts F sur k, les faisceaux  $h_{i}^{Nis}(F)$ sont invariants par homotopie.
 \end{proposition}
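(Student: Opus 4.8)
The plan is to reduce homotopy invariance of the Nisnevich cohomology sheaves $h_i^{Nis}(F) = H^{-i}(C_*(F))$ to homotopy invariance of the \emph{presheaf} homology of the Suslin complex, and then to propagate this along Nisnevich sheafification by invoking Voevodsky's fundamental theorem on homotopy invariant presheaves with transfers. First I would check that $C_*(F)$ is a complex of presheaves with transfers. The assignment $X \mapsto F(X \times \Delta^n)$ is a presheaf with transfers because $-\times\Delta^n$ carries a finite correspondence $\alpha\in Cor(X,Y)$ to the external product $\alpha\times\Delta^n \in Cor(X\times\Delta^n, Y\times\Delta^n)$, while the cofaces and codegeneracies of $\Delta^\bullet$ are honest morphisms and hence induce transfers; so the differentials are morphisms in $PST(k)$, and the unsheafified cohomology presheaves $\underline{h}_i(F) := H^{-i}(C_*(F))$ are themselves presheaves with transfers.

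Second, and this is the elementary, transfer-free part, I would prove that each $\underline{h}_i(F)$ is homotopy invariant as a presheaf. The heart of the matter is the general fact that $\mathbb{A}^1$-homotopic morphisms induce the same map on the homology presheaves of a Suslin complex; equivalently, the two unit sections $i_0,i_1\colon Y\to Y\times\mathbb{A}^1$ act identically on $\underline{h}_i(F)(Y\times\mathbb{A}^1)\to\underline{h}_i(F)(Y)$. This is furnished by the standard explicit chain homotopy, the algebraic analogue of the prism decomposition proving homotopy invariance of singular homology: one builds operators $s_n\colon C_nF(Y\times\mathbb{A}^1)\to C_{n+1}F(Y)$ out of the combinatorial maps $\Delta^{n+1}\to\Delta^n\times\mathbb{A}^1$ together with the multiplication on $\mathbb{A}^1$, and verifies the identity $ds+sd = i_1^* - i_0^*$. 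Homotopy invariance of $\underline{h}_i(F)$ then follows formally: the projection $p\colon X\times\mathbb{A}^1\to X$ induces an isomorphism on $\underline{h}_i$ with inverse $i_0^*$, since $i_0^*p^* = \mathrm{id}$ and $p^*i_0^* = (i_0 p)^* = \mathrm{id}^*$ by the preceding statement applied to the multiplication homotopy. No transfers intervene at this stage.

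Third, I would invoke Voevodsky's theorem: over a (perfect) field, the Nisnevich sheaf associated to a homotopy invariant presheaf with transfers is again a homotopy invariant sheaf with transfers (this is the content of [21], [32], and is where all the genuine difficulty lies). Applying it to $G := \underline{h}_i(F)$, which is a homotopy invariant presheaf with transfers by the previous paragraph, shows that $a_{Nis}\bigl(\underline{h}_i(F)\bigr)$ is homotopy invariant. Finally, since $a_{Nis}$ is exact by the sheafification proposition above, it commutes with the formation of cohomology objects of a complex, so that
\[
h_i^{Nis}(F) = H^{-i}\bigl(a_{Nis}\, C_*(F)\bigr) = a_{Nis}\bigl(H^{-i}(C_*(F))\bigr) = a_{Nis}\bigl(\underline{h}_i(F)\bigr),
\]
which is therefore homotopy invariant. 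When $k$ is not perfect one first reduces to the perfect closure exactly as in the proof of the isomorphism $\sigma$ above.

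The main obstacle is unquestionably the third step: the stability of homotopy invariance under Nisnevich sheafification is the deep geometric input, resting on the injectivity of the sections of a homotopy invariant presheaf with transfers into its generic fibre and on a Gersten/Cousin-type analysis over the field. By contrast, steps one, two and four are purely formal, relying only on the bilinearity of finite correspondences, the explicit prism homotopy, and exactness of $a_{Nis}$.
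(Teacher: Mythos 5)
Your proof is correct and is essentially the paper's own approach: the paper's ``proof'' consists solely of the citation [21, Th.\ 13.8], and your three-step argument (homotopy invariance of the cohomology presheaves $\underline{h}_i(F)$ via the explicit prism/simplicial chain homotopy, then Voevodsky's deep theorem that Nisnevich sheafification preserves homotopy invariance of presheaves with transfers, then exactness of $a_{Nis}$ to identify $h_i^{Nis}(F)$ with $a_{Nis}(\underline{h}_i(F))$) is precisely how that cited theorem is proved. The only caveat, the perfect-field hypothesis required by the deep step, you flag and handle yourself, and it is harmless for the paper's purposes since the de Rham realization only concerns characteristic zero.
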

 
 \begin{proof}
 
 cf Th 13.8,[21]. 
 
 \end{proof}

  Soit X un k-sch\'{e}ma de type fini. On  d\'{e}finit le pr\'{e}faisceau $L(X):Sm/k\rightarrow Ab$ avec L(X)(U) le groupe ab\'{e}lien libre engendr\'{e} par les composantes irr\'{e}ductibles  Z de $X\times U$ qui sont  quasi-finies  sur U et dominantes sur une composante connexe de U.
Pour X un k-sch\'{e}ma de type fini, on \'{e}crit $C_{*}(X)$ \`{a} la place de $C_{*}(L(X))$. D'apr\`{e}s la proposition ci-dessus,
on a alors un foncteur $L: Var/k\rightarrow~ DM^{eff}_{-}(k)$. Nous allons voir que ce foncteur se factorise par $DM^{eff}_{gm}(k)$.

\begin{proposition} Soit X un k-sch\'{e}ma de type fini, on suppose $X=U\cup V$ avec U, V ouverts. Alors on a un triangle distingu\'{e} de Mayer-Vietoris:$\\$$\\$
$C_{*}(U\cap V)\rightarrow C_{*}(U)\oplus C_{*}(V)\rightarrow C_{*}(X)\rightarrow C_{*}(U\cap V)[1]$

\end{proposition}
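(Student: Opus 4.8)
The plan is to realise the triangle as the image in $D^{-}(Sh_{Nis}(SmCor/k))$ of a short exact sequence of Nisnevich sheaves with transfers, in exact parallel with the smooth Mayer--Vietoris triangle recalled earlier for $M_{gm}$; the only genuinely new point is that one must argue with the quasi-finite cycles defining $L$ rather than with finite correspondences. Concretely, I would assemble the covariant maps attached to the open immersions $U\cap V\hookrightarrow U$, $U\cap V\hookrightarrow V$, $U\hookrightarrow X$, $V\hookrightarrow X$ into the complex of Nisnevich sheaves with transfers
\[
0\to L(U\cap V)\xrightarrow{\,(i_{U},i_{V})\,} L(U)\oplus L(V)\xrightarrow{\,j_{U}-j_{V}\,} L(X)\to 0,
\]
which is a complex because $j_{U}\circ i_{U}=j_{V}\circ i_{V}$ is the inclusion $U\cap V\hookrightarrow X$, and I would claim it is short exact as Nisnevich sheaves.

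First I would reduce exactness to the stalks: since the points of the Nisnevich topology are the Henselian local schemes, it suffices to check exactness of the sequence of abelian groups obtained by evaluating at an arbitrary Henselian local $S$ over $k$. Left exactness and the inclusion $\mathrm{Im}\subset\ker$ are formal, since two cycles supported in $(U\cap V)\times S$ which agree as cycles on $X\times S$ already agree on the open $(U\cap V)\times S$.

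The heart of the matter is surjectivity on the right, together with exactness in the middle, and the key input is the local structure of quasi-finite morphisms over a Henselian local base. Given an integral generator $Z\subset X\times S$, quasi-finite and dominant over $S$, I would invoke Zariski's Main Theorem to realise $Z$ as an open subscheme of a scheme finite over $S$; over the Henselian local base such a finite scheme splits as a disjoint union of local pieces, and the integral piece meeting $Z$ carries a single closed point, whose image $x\in X$ lies in $U$ or in $V$. Because the only open of a local scheme containing its closed point is the whole scheme, the preimage of $U\times S$ (resp. $V\times S$) then swallows all of $Z$, so that $Z$ is already a generator of $L(U)(S)$ or of $L(V)(S)$; this yields surjectivity. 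Running the same decomposition into local pieces on an element of $\ker(j_{U}-j_{V})$ lets one match the $U$- and $V$-components piece by piece and exhibit it as the image of a cycle on $(U\cap V)\times S$, giving exactness in the middle. I expect this local analysis of quasi-finite (as opposed to finite) cycles to be the main obstacle: the closure of a quasi-finite relative cycle need not be a relative cycle, so the naive extension-by-closure maps are not defined at the level of presheaves, and it is precisely by passing to the Henselian local stalks that the straddling phenomenon disappears and the covariant maps become meaningful.

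Finally I would apply the Suslin complex functor $C_{*}$. Since $C_{n}(F)(-)=F(-\times\Delta^{n})$ is exact in $F$, the short exact sequence of sheaves yields a short exact sequence of complexes of Nisnevich sheaves with transfers
\[
0\to C_{*}(U\cap V)\to C_{*}(U)\oplus C_{*}(V)\to C_{*}(X)\to 0,
\]
whose associated distinguished triangle in $D^{-}(Sh_{Nis}(SmCor/k))$ is exactly the asserted Mayer--Vietoris triangle. By the proposition that the cohomology sheaves $h_{i}^{Nis}$ of the Suslin complex of a sheaf with transfers are homotopy invariant, all three terms lie in $DM_{-}^{eff}(k)$, so the triangle is a distinguished triangle there, which finishes the proof.
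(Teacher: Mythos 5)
Your overall strategy is the same as the paper's: realise the triangle as the image of a short exact sequence of Nisnevich sheaves $0\to L(U\cap V)\to L(U)\oplus L(V)\to L(X)\to 0$, verified on henselian local stalks, then apply $C_{*}$. But the step you yourself flag as the main obstacle --- the genuinely quasi-finite, non-finite cycles allowed by the paper's definition of $L$ --- is exactly where the argument fails, and it cannot be repaired within that definition. First, your appeal to Zariski's Main Theorem is circular: $Z$ sits as a dense open in an integral scheme $Z'$ finite over $S$, hence local with closed point $z'$; but $Z'$ is an abstract $S$-scheme which does not map to $X\times S$, so $z'$ has no ``image in $X$'' unless $z'\in Z$, in which case $Z=Z'$ is finite over $S$ and you are back to the classical argument for finite correspondences. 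Second, when $z'\notin Z$ your conclusion is false: straddling does occur over henselian points. Let $A$ be the henselisation of $k[s,t]$ at the origin, $S=\operatorname{Spec}A$, $X=\mathbb{A}^{2}$, $U=X\setminus\{x_{2}=0\}$, $V=X\setminus\{x_{2}=1\}$. The ring map $A[x_{1},x_{2}]\to A[1/t]$, $x_{1}\mapsto 1/t$, $x_{2}\mapsto s/t$, is surjective, so $Z:=\operatorname{Spec}A[1/t]$ is a closed integral subscheme of $X\times S$, quasi-finite and dominant over $S$; yet it meets $\{x_{2}=0\}\times S$ (at the prime $(s)$) and $\{x_{2}=1\}\times S$ (at the prime $(s-t)$), so $[Z]$ is a generator of $L(X)(S)$ contained in neither $U\times S$ nor $V\times S$. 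Worse, the covariant maps do not ``become meaningful'' on stalks either: the cycle $\operatorname{Spec}A[1/t]\hookrightarrow U\times S$ given by $(x_{1},x_{2})=(s/t,t)$ is closed in $U\times S$ and quasi-finite over $S$, but its closure in $X\times S$ contains the whole line $\{x_{2}=0\}$ over the closed point of $S$; so there is no closure homomorphism $L(U)(S)\to L(X)(S)$ at all, and the sequence you want to write down does not exist with quasi-finite cycles.

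The repair is not a finer local analysis but the right definition of $L$: the proposition needs the presheaf generated by cycles \emph{finite} over the base (Voevodsky's $c_{equi}(X,0)$, which is what $L$ must mean in [32], \S 4.1, despite the word ``quasi-finies'' in the paper's definition); the quasi-finite presheaf $z_{equi}(X,0)$ is covariant only for proper morphisms and is the one computing motives with compact supports, not $M_{gm}$. With finite cycles your proof becomes correct and simpler: a cycle in $U\times S$ finite over $S$ is proper over $S$, hence already closed in $X\times S$, so the three maps exist at the presheaf level with no appeal to Zariski's Main Theorem; over a henselian local $S$ any integral generator of $L(X)(S)$ is finite and connected over $S$, hence itself local, and your ``swallowing'' argument plus your componentwise matching give exactness of the stalk sequences. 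A last, smaller point: even then, the termwise sequence $0\to C_{*}(U\cap V)\to C_{*}(U)\oplus C_{*}(V)\to C_{*}(X)\to 0$ is \emph{not} exact as complexes of sheaves, because evaluation on $-\times\Delta^{n}$ does not preserve Nisnevich epimorphisms: already $L(U)(\Delta^{1})\oplus L(V)(\Delta^{1})\to L(X)(\Delta^{1})$ fails to be surjective (the cycle $\{x_{1}=0,\ x_{2}=t\}$ straddles $U$ and $V$), and $\operatorname{Spec}k$ admits no nontrivial Nisnevich covers. The triangle is obtained instead by viewing the short exact sequence of sheaves as a distinguished triangle of $D^{-}(Sh_{Nis}(SmCor/k))$ and applying the $\mathbb{A}^{1}$-localisation functor, which is triangulated and is computed on bounded above complexes by $C_{*}$; homotopy invariance of the $h_{i}^{Nis}$ then places all three vertices in $DM_{-}^{eff}(k)$, as you say.
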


\begin{proof}
Il nous suffit de v\'{e}rifier que l'on a suite exacte de faisceaux Nisnevich:\\
\begin{center}
$0\rightarrow L(U\cap V)\rightarrow L(U)\oplus L(V)\rightarrow L(X)\rightarrow L(U\cap V)[1]\rightarrow 0$
\end{center}

\end{proof}

On aura besoin du résultat suivant donn\'{e} dans sous une forme l\'{e}g\`{e}rement diff\'{e}rente dans [31] et d\'{e}montr\'{e}e dans [32].

\begin{theorem}
Soit k un corps admettant la r\'{e}solution des singularit\'{e}s et F un pr\'{e}faisceau avec transferts sur Sm/k tel que tout pour sch\'{e}ma lisse X et toute section $\phi\in F(X)$ il y a un morphisme propre birationnel $p:X'\rightarrow X$ avec $F(p)(\phi)=0$ alors le complexe $C_{*}(F)$ est quasi-isomorphe \`{a} z\'{e}ro.

\end{theorem}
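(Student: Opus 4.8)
Le plan est de ramener l'\'{e}nonc\'{e} \`{a} une annulation sur les corps. Dire que $C_{*}(F)$ est quasi-isomorphe \`{a} z\'{e}ro \'{e}quivaut \`{a} l'annulation de tous ses faisceaux d'homologie de Nisnevich $h_{i}^{Nis}(F)$. D'apr\`{e}s la proposition pr\'{e}c\'{e}dente, ce sont des faisceaux Nisnevich avec transferts invariants par homotopie. Or, pour un tel faisceau $G$, la th\'{e}orie de Voevodsky (crit\`{e}re des corps, issu du th\'{e}or\`{e}me d'injectivit\'{e} de type Gersten) assure que $G=0$ d\`{e}s que $G(Spec(K))=0$ pour toute extension $K/k$ de type fini, et que la faisceautisation ne modifie pas les sections au point g\'{e}n\'{e}rique. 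Comme l'homologie commute aux limites inductives filtrantes, on a $h_{i}^{Nis}(F)(Spec(K))=H_{i}(F(\Delta^{\bullet}_{K}))$, et il suffit donc de montrer que le complexe de Suslin $F(\Delta^{\bullet}_{K})$ est acyclique pour toute telle extension $K$.

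J'\'{e}tablirais d'abord l'annulation de $F$ au point g\'{e}n\'{e}rique, qui \'{e}puise les bas degr\'{e}s. Soit $U$ un mod\`{e}le lisse de $K$ et $\phi\in F(U)$~; l'hypoth\`{e}se fournit un morphisme propre birationnel $p:U'\rightarrow U$ avec $p^{*}\phi=0$. En restreignant \`{a} l'ouvert dense $V\subseteq U$ au-dessus duquel $p$ est un isomorphisme, l'\'{e}galit\'{e} $p^{*}\phi=0$ donne $\phi|_{V}=0$ dans $F(V)$. Par cons\'{e}quent $F(Spec(K))=\varinjlim_{V}F(V)=0$. En particulier le terme de degr\'{e} z\'{e}ro $F(\Delta^{0}_{K})=F(Spec(K))$ du complexe de Suslin est nul, d'o\`{u} $H_{0}(F(\Delta^{\bullet}_{K}))=0$~; il reste \`{a} montrer que tout cycle $z\in F(\Delta^{n}_{K})$ avec $n\geq 1$ est un bord.

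C'est ici le c\oe{}ur de la preuve. \'{E}tant donn\'{e} un tel cycle $z$, l'hypoth\`{e}se fournit un morphisme propre birationnel $p:Y\rightarrow\Delta^{n}_{K}\cong\mathbb{A}^{n}_{K}$ tel que $p^{*}z=0$ dans $F(Y)$, et la r\'{e}solution des singularit\'{e}s permet de supposer $Y$ lisse et $p$ compos\'{e} d'\'{e}clatements \`{a} centres lisses, $p$ \'{e}tant un isomorphisme au-dessus d'un ouvert dense. L'id\'{e}e est de transformer cette annulation sur $Y$ en une \'{e}criture de $z$ comme bord sur $\Delta^{\bullet}_{K}$, en fabriquant une homotopie contractante sur $F(\Delta^{\bullet}_{K})$ \`{a} partir des transferts attach\'{e}s \`{a} $p$ et aux \'{e}clatements successifs.

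La principale difficult\'{e} sera que l'\'{e}clatement $p$ porte sur $\Delta^{n}_{K}$ et n'est aucunement compatible avec les faces et d\'{e}g\'{e}n\'{e}rescences du sch\'{e}ma cosimplicial $\Delta^{\bullet}$~: il faut, par r\'{e}currence sur la dimension du support des cycles et en r\'{e}appliquant la r\'{e}solution, rendre ces modifications simplicialement coh\'{e}rentes avant de redescendre. De mani\`{e}re plus structurelle, la propri\'{e}t\'{e} d'annulation par \'{e}clatement signifie exactement que la cdh-faisceautisation de $F$ est nulle~; l'\'{e}nonc\'{e} revient alors \`{a} la co\"{\i}ncidence, pour un corps admettant la r\'{e}solution des singularit\'{e}s, des cohomologies de Nisnevich et cdh des faisceaux invariants par homotopie avec transferts, et c'est cette descente cdh qui constitue l'obstacle technique central dont d\'{e}coule aussit\^{o}t l'acyclicit\'{e} voulue.
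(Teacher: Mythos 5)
Votre proposition comporte un vrai trou, et il se trouve exactement l\`{a} o\`{u} vous le signalez vous-m\^{e}me. Les r\'{e}ductions pr\'{e}liminaires sont correctes : les $h_{i}^{Nis}(F)$ sont des faisceaux Nisnevich avec transferts invariants par homotopie, le crit\`{e}re des corps de Voevodsky (issu du th\'{e}or\`{e}me d'injectivit\'{e}) ram\`{e}ne tout \`{a} l'acyclicit\'{e} de $F(\Delta^{\bullet}_{K})$ pour $K/k$ de type fini, et votre argument en degr\'{e} z\'{e}ro est juste : si $p:U'\rightarrow U$ est propre birationnel avec $F(p)(\phi)=0$, la restriction \`{a} l'ouvert dense $V$ au-dessus duquel $p$ est un isomorphisme donne $\phi|_{V}=0$, d'o\`{u} $F(Spec(K))=0$ et $H_{0}=0$. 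Mais ceci n'\'{e}puise que le degr\'{e} z\'{e}ro. Pour $n\geq 1$, $\Delta^{n}_{K}$ n'est pas un corps et l'annulation de $F$ aux points g\'{e}n\'{e}riques ne dit rien sur $F(\Delta^{n}_{K})$ ni sur ses cycles ; l'homotopie contractante que vous voulez fabriquer \`{a} partir des \'{e}clatements n'est jamais construite, et vous reconnaissez vous-m\^{e}me que l'incompatibilit\'{e} de $p$ avec les faces et d\'{e}g\'{e}n\'{e}rescences de $\Delta^{\bullet}$ reste enti\`{e}re, avant de renvoyer toute la difficult\'{e} \`{a} la descente cdh, qualifi\'{e}e d'obstacle technique central. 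Nommer l'obstacle n'est pas le franchir : en l'\'{e}tat, le coeur du th\'{e}or\`{e}me n'est pas d\'{e}montr\'{e}.

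Deux pr\'{e}cisions pour situer ce qui manque. D'abord, l'\'{e}quivalence que vous affirmez entre l'hypoth\`{e}se et $F_{cdh}=0$ est trop rapide : un morphisme propre birationnel $p:X'\rightarrow X$ n'est pas \`{a} lui seul un recouvrement cdh ; il faut lui adjoindre le ferm\'{e} $Z$ hors duquel $p$ est un isomorphisme et tuer aussi la section sur $Z$, ce qui demande une r\'{e}currence sur la dimension combin\'{e}e \`{a} la r\'{e}solution des singularit\'{e}s -- c'est d\'{e}j\`{a} une partie non triviale de l'argument. Ensuite, le texte lui-m\^{e}me ne d\'{e}montre pas ce th\'{e}or\`{e}me : il le cite tel quel (d\'{e}montr\'{e} dans [32], Voevodsky), c'est-\`{a}-dire qu'il prend pour acquis pr\'{e}cis\'{e}ment ce que votre esquisse laisse en suspens, \`{a} savoir l'annulation cdh et le th\'{e}or\`{e}me de comparaison des cohomologies Nisnevich et cdh pour les pr\'{e}faisceaux avec transferts invariants par homotopie. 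Votre plan reconstitue donc fid\`{e}lement la strat\'{e}gie de Voevodsky, mais pour qu'il devienne une preuve il faudrait d\'{e}montrer (ou au minimum \'{e}noncer et r\'{e}f\'{e}rencer avec pr\'{e}cision) ces deux ingr\'{e}dients, le second \'{e}tant le v\'{e}ritable contenu de l'\'{e}nonc\'{e}.
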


On a alors le corollaire suivant qui fournit le triangle distingu\'{e} d'\'{e}clatement:

\begin{proposition}
Soit un carr\'{e} cart\'{e}sien entre k-sch\'{e}mas de type fini de la forme:\\
\begin{center}
\xymatrix{p^{-1}(Z)\ar[d]\ar[r]& X_{Z}\ar[d]^{p}\\Z\ar[r]&X}
\end{center}
avec les conditions suivantes:\\
\begin{center}
(i) $p: X_{Z}\rightarrow X$ est propre et le morphisme $Z\rightarrow X$ est une immersion ferm\'{e}e.
\end{center}

(ii) Le morphisme $p^{-1}(X-Z)\rightarrow X$ est un isomorphisme. Alors il y a un triangle distingu\'{e} dans $DM^{eff}_{-}(k)$ de la forme:\\
\begin{center}
$C_{*}(p^{-1}(Z))\rightarrow C_{*}(Z)\oplus C_{*}(X_(Z))\rightarrow C_{*}(X)\rightarrow C_{*}(p^{-1}(Z))[1]$.
\end{center}
\end{proposition}

\begin{proof}
Il suffit de voir que la suite de pr\'{e}faisceaux:\\
 
\begin{center}
$0\rightarrow L(p^{-1}(Z))\rightarrow L(Z)\oplus L(X_{Z})\rightarrow L(X)$
\end{center}
 est exacte et que le pr\'{e}faisceau quotient v\'{e}rifie $L(X)/ (L_{*}(Z)\oplus L_{*}(X_{Z}))$ les conditions du th\'{e}or\`{e}me.
\end{proof}

On a alors comme corollaire que pour tout k-sch\'{e}ma de type fini X, k admettant la r\'{e}solution des singularit\'{e}s $C_{*}(X)$ est dans  $DM^{eff}_{gm}(k)$.

On a de plus les propositions suivantes qui d\'{e}coulent formellement de la proposition ci-dessus et du cas projectif lisse et du triangle distingu\'{e} d'\'{e}clatement.
\begin{proposition}

(i) $ C_{*}(X)\otimes C_{*}(Y)=C_{*}(X\times Y)$.

$\\$
(ii) $ C_{*}(X\times\mathbb{A}^{1})= C_{*}(X)$.

\end{proposition}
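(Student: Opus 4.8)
The plan is to bootstrap both identities from the smooth case to arbitrary finite type $k$-schemes by Noetherian induction on the dimension, using resolution of singularities together with the blow-up distinguished triangle of the preceding proposition. The smooth case is already available: the Künneth identity $M_{gm}([X])\otimes M_{gm}([Y])=M_{gm}([X\times Y])$ and the homotopy invariance $M_{gm}([X\times\mathbb{A}^1])=M_{gm}([X])$ hold by construction on $Sm/k$, and for smooth $X$ one has $C_*(X)=M_{gm}(X)$. Throughout one may assume $X$ and $Y$ reduced, since $L(X)$ depends only on the reduced structure, and one uses that the projections $C_*(W\times\mathbb{A}^1)\to C_*(W)$ and the comparison morphisms $C_*(W)\otimes C_*(Y)\to C_*(W\times Y)$ are natural in $W$.

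For (ii) I would argue by induction on $d=\dim X$, using that the statement is already known for every smooth scheme. If $X$ is singular of dimension $d$, resolution of singularities provides a proper birational $p:X'\to X$ with $X'$ smooth that is an isomorphism over the complement of a closed $Z\subset X$ with $\dim Z<d$; then $p^{-1}(Z)\subset X'$ is the exceptional locus, of dimension $<d$ as well. Multiplying the cartesian square by $\mathbb{A}^1$ preserves hypotheses (i) and (ii) of the preceding proposition (properness and closed immersions are stable under base change, and the isomorphism over the open complement persists), so the projections assemble into a morphism of blow-up triangles
\[
\xymatrix@C=0.6em{
C_*(p^{-1}(Z)\times\mathbb{A}^1)\ar[r]\ar[d] & C_*(Z\times\mathbb{A}^1)\oplus C_*(X'\times\mathbb{A}^1)\ar[r]\ar[d] & C_*(X\times\mathbb{A}^1)\ar[r]\ar[d] & {[1]}\\
C_*(p^{-1}(Z))\ar[r] & C_*(Z)\oplus C_*(X')\ar[r] & C_*(X)\ar[r] & {[1]}
}
\]
The two left vertical arrows are isomorphisms, by the induction hypothesis applied to $Z$ and $p^{-1}(Z)$ and by the smooth case applied to $X'$; hence the third arrow $C_*(X\times\mathbb{A}^1)\to C_*(X)$ is an isomorphism by the five lemma for triangulated categories.

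For (i) the same mechanism applies, split into two nested inductions and fed by the exactness (triangulatedness) of the functor $-\otimes C_*(Y)$, $DM_{gm}^{eff}(k)$ being a triangulated tensor category. First, for $X$ smooth and $Y$ arbitrary, I would induct on $\dim Y$: the base case $Y$ smooth is the smooth Künneth formula, and the inductive step tensors the blow-up triangle of $Y$ by $C_*(X)$ and compares it, through the comparison maps, with the blow-up triangle of $X\times Y$ obtained by multiplying the resolution square of $Y$ by $X$. Second, for $X$ and $Y$ both arbitrary, I would induct on $\dim X$: the base case is the step just proved (one factor smooth), and the inductive step tensors the blow-up triangle of $X$ by $C_*(Y)$ and compares it with that of $X\times Y$. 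In each step two of the three vertical comparison arrows are isomorphisms, so the third is as well.

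The main obstacle I anticipate is not the inductive skeleton, which is formal, but the bookkeeping that makes it run. One must check that the product of a blow-up square with $Y$ (or with $\mathbb{A}^1$) again satisfies hypotheses (i) and (ii) of the preceding proposition, so that the second row is genuinely a distinguished triangle; and, above all, one must verify that the comparison morphisms $C_*(-)\otimes C_*(Y)\to C_*(-\times Y)$ are natural enough to form a bona fide morphism of distinguished triangles, i.e.\ that every square of the ladder commutes. Granting this naturality, the stability of blow-up squares under products, and the exactness of $\otimes$, both identities follow formally, as asserted.
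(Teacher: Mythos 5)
Your proposal is correct and takes essentially the same approach as the paper: the paper offers no written proof, merely asserting that both identities follow formally from the preceding blow-up distinguished triangle and the smooth projective case, and your dimension induction via resolution of singularities, the comparison maps, and the five lemma for triangulated categories is precisely that formal argument carried out in detail. The bookkeeping you flag (stability of the blow-up squares under product with $Y$ or $\mathbb{A}^{1}$, and that the projections and comparison maps give genuine morphisms of distinguished triangles) is indeed all that needs checking, and it holds because everything is induced functorially from the underlying exact sequences of presheaves $0\rightarrow L(p^{-1}(Z))\rightarrow L(Z)\oplus L(X')\rightarrow L(X)$.
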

 
 On veut d\'{e}crire maintenant les groupes de morphismes $Hom(C_{*}(X),C_{*}(F))$ pour X un k-sch\'{e}ma de type fini et F un faisceau Nisnevich avec transferts.
 Dans le cas de X sch\'{e}ma lisse, on a d'apr\`{e}s [31] qu'il est isomorphe \`{a} $\mathbb{H}_{Nis}(X,C_{*}(F))$, mais il se trouve que dans le cas singulier, contrairement  a ce qu'on pourrait penser, on va obtenir quelque chose de diff\'{e}rent.
 Il nous faut donc une nouvelle topologie de Grothendieck, dite cdh qui va nous donner les r\'{e}sultats voulus.
 
 \begin{definition}
 La topologie cdh sur Sch/K est la topologie de Grothendieck engendr\'{e} par la pr\'{e}-topologie suivante:$\\$

(1) les recouvrements Nisnevich,

\begin{center}
 (2) les recouvrements de la forme $X'\coprod Z\stackrel{p\coprod i}{\rightarrow} $ avec p un morphisme propre et i une immersion ferm\'{e}e et tel que $p^{-1}(X-i(Z))\rightarrow X-i(Z)$ est une isomorphisme.
\end{center}
 \end{definition}
 On note $\pi$ le morphisme de sites canonique vers le  site Nisnevich.
 Le th\'{e}or\`{e}me suivant se d\'{e}duit du th\'{e}or\`{e}me 3 ci-dessus:
 
 \begin{theorem}[32]
 X un k-sch\'{e}ma de type fini et F un pr\'{e}faisceau avec transferts sur Sm/k. Alors pour tout $i\geq 0$ il y a des isomorphismes canoniques:\\
 
\begin{center}
$Hom(C_{*}(X),C_{*}(F)[i])=\mathbb{H}_{cdh}^{i}(X,C_{*}(\pi^{*}F))=\mathbb{H}_{cdh}^{i}(X,\pi^{*}(C_{*}(F))).$
\end{center}
$\\$
 En particulier, si X est lisse on a un isomorphisme canonique:\\
\begin{center}
 $\mathbb{H}_{cdh}^{i}(X,C_{*}(\pi^{*}F))=\mathbb{H}_{cdh}^{i}(X,\pi^{*}(C_{*}(F)))=\mathbb{H}_{Nis}^{i}(X,C_{*}(F))$
\end{center}
 
 \end{theorem}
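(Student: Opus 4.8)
Le plan est de comparer, comme deux th\'{e}ories cohomologiques contravariantes sur $Sch/k$, le foncteur $X\mapsto Hom(C_{*}(X),C_{*}(F)[i])$ et le foncteur $X\mapsto\mathbb{H}^{i}_{cdh}(X,\pi^{*}(C_{*}(F)))$, de les identifier d'abord sur les sch\'{e}mas lisses, puis de propager l'identification \`{a} tout sch\'{e}ma de type fini par r\'{e}currence sur la dimension gr\^{a}ce \`{a} la r\'{e}solution des singularit\'{e}s. Pour la premi\`{e}re \'{e}galit\'{e}, je remarque que le foncteur $\pi^{*}$ est exact et commute aux produits $U\mapsto U\times\Delta^{n}$ qui interviennent dans la d\'{e}finition de $C_{*}$; comme les diff\'{e}rentielles de $C_{*}$ sont des sommes altern\'{e}es de morphismes issus de la structure cosimpliciale de $\Delta^{\bullet}$, le foncteur $C_{*}$ commute \`{a} $\pi^{*}$ \`{a} isomorphisme naturel pr\`{e}s, d'o\`{u} $C_{*}(\pi^{*}F)\cong\pi^{*}(C_{*}(F))$ comme complexes de faisceaux cdh et donc $\mathbb{H}^{i}_{cdh}(X,C_{*}(\pi^{*}F))=\mathbb{H}^{i}_{cdh}(X,\pi^{*}(C_{*}(F)))$ imm\'{e}diatement.

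Je pose alors $\Phi^{i}(X)=Hom(C_{*}(X),C_{*}(F)[i])$ et $\Psi^{i}(X)=\mathbb{H}^{i}_{cdh}(X,\pi^{*}(C_{*}(F)))$, reli\'{e}s par un morphisme de comparaison naturel. Le point structurant est que les deux envoient les carr\'{e}s de Mayer-Vietoris et les carr\'{e}s d'\'{e}clatement sur des suites exactes longues: pour $\Phi^{i}$, en appliquant $Hom(-,C_{*}(F)[i])$ aux triangles distingu\'{e}s des propositions pr\'{e}c\'{e}dentes (Mayer-Vietoris et \'{e}clatement pour $C_{*}$); pour $\Psi^{i}$, parce que les carr\'{e}s d'\'{e}clatement sont par d\'{e}finition des recouvrements cdh et que la cohomologie cdh v\'{e}rifie la descente pour de tels carr\'{e}s, de m\^{e}me que pour les recouvrements Nisnevich.

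Pour le cas lisse, qui fournira aussi la derni\`{e}re assertion, j'utilise que $\Phi^{i}(X)=\mathbb{H}^{i}_{Nis}(X,C_{*}(F))$ d'apr\`{e}s [31], et il reste \`{a} \'{e}tablir $\mathbb{H}^{i}_{cdh}(X,\pi^{*}C_{*}(F))=\mathbb{H}^{i}_{Nis}(X,C_{*}(F))$, c'est-\`{a}-dire $R\pi_{*}\pi^{*}C_{*}(F)\simeq C_{*}(F)$ sur le site Nisnevich des sch\'{e}mas lisses. C'est le point d\'{e}licat et le c\oe{}ur technique: le c\^{o}ne du morphisme d'adjonction $C_{*}(F)\to R\pi_{*}\pi^{*}C_{*}(F)$ se construit, via la r\'{e}solution des singularit\'{e}s, \`{a} partir de pr\'{e}faisceaux avec transferts dont toute section est annul\'{e}e par un morphisme propre birationnel; il est donc quasi-isomorphe \`{a} z\'{e}ro d'apr\`{e}s le th\'{e}or\`{e}me d'acyclicit\'{e} ci-dessus, ce qui donne l'isomorphisme voulu et, pour $X$ lisse, $\Phi^{i}(X)=\Psi^{i}(X)$.

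Je conclus enfin par r\'{e}currence sur $\dim X$. La r\'{e}solution des singularit\'{e}s fournit un morphisme propre birationnel $p:X'\to X$ avec $X'$ lisse et un ferm\'{e} $Z\subset X$ de dimension strictement inf\'{e}rieure \`{a} $\dim X$ tel que $p^{-1}(X-Z)\to X-Z$ soit un isomorphisme. Le carr\'{e} d'\'{e}clatement associ\'{e} donne des suites exactes longues pour $\Phi^{i}$ et pour $\Psi^{i}$, dont les termes relatifs \`{a} $X'$ (lisse, trait\'{e} par le cas de base) et \`{a} $Z$ et $p^{-1}(Z)$ (de dimension strictement inf\'{e}rieure, trait\'{e}s par l'hypoth\`{e}se de r\'{e}currence) co\"{\i}ncident; le lemme des cinq fournit alors l'isomorphisme $\Phi^{i}(X)\cong\Psi^{i}(X)$ pour $X$ arbitraire, ce qui ach\`{e}ve la preuve.
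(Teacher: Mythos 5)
Notez d'abord que le texte ne d\'{e}montre pas ce th\'{e}or\`{e}me: il l'attribue \`{a} [32] et se contente d'indiquer qu'il se d\'{e}duit du th\'{e}or\`{e}me d'acyclicit\'{e} rappel\'{e} juste avant (celui o\`{u} $C_{*}(F)$ est quasi-isomorphe \`{a} z\'{e}ro d\`{e}s que toute section de $F$ meurt apr\`{e}s un morphisme propre birationnel). Votre squelette de preuve --- cas lisse via [31] et comparaison Nisnevich/cdh fond\'{e}e sur ce th\'{e}or\`{e}me d'acyclicit\'{e}, puis d\'{e}vissage par r\'{e}currence sur la dimension au moyen de la r\'{e}solution des singularit\'{e}s, des triangles d'\'{e}clatement et du lemme des cinq --- est bien celui de la preuve de Voevodsky et correspond \`{a} ce que le texte sugg\`{e}re; la r\'{e}currence finale est correcte telle quelle (on a bien $\dim p^{-1}(Z)<\dim X$ car $p$ est un isomorphisme au-dessus de $X-Z$). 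En revanche, vos deux \'{e}tapes de comparaison comportent des lacunes r\'{e}elles.

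(1) L'identification $C_{*}(\pi^{*}F)\cong\pi^{*}(C_{*}(F))$ n'est pas ``imm\'{e}diate''. Le foncteur $\pi^{*}$ est un adjoint \`{a} gauche (extension de Kan suivie de la faisceautisation cdh), alors que $C_{n}$ est la pr\'{e}composition par $-\times\Delta^{n}$: ces deux op\'{e}rations ne commutent pas formellement, et l'exactitude de $\pi^{*}$ n'y change rien. Ce qui commute trivialement avec $C_{*}$, c'est l'image directe $\pi_{*}$ (la restriction aux sch\'{e}mas lisses), puisque $U\times\Delta^{n}$ est lisse quand $U$ l'est; on en d\'{e}duit par adjonction le morphisme de comparaison naturel $\pi^{*}C_{*}(F)\rightarrow C_{*}(\pi^{*}F)$, mais d\'{e}montrer qu'il induit un isomorphisme en hypercohomologie cdh utilise d\'{e}j\`{a} la r\'{e}solution des singularit\'{e}s (tout sch\'{e}ma de type fini est cdh-localement lisse): ce point n'est donc pas la partie formelle de l'\'{e}nonc\'{e}. (2) Le c\oe{}ur technique --- $\mathbb{H}_{Nis}^{i}(X,C_{*}(F))\cong\mathbb{H}_{cdh}^{i}(X,\pi^{*}C_{*}(F))$ pour $X$ lisse --- est affirm\'{e} plut\^{o}t que d\'{e}montr\'{e}. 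Invoquer le c\^{o}ne du morphisme d'adjonction est le bon m\'{e}canisme moral, mais pour appliquer le th\'{e}or\`{e}me d'acyclicit\'{e} il faut encore: (i) munir $\pi_{*}\pi^{*}C_{*}(F)$ (la faisceautisation cdh restreinte aux lisses) de transferts compatibles, ce qui n'est pas gratuit; (ii) passer de ``toute section meurt cdh-localement'' \`{a} ``toute section meurt apr\`{e}s un morphisme propre birationnel'', ce qui exige de travailler avec des faisceaux Nisnevich et de raffiner tout recouvrement cdh d'un sch\'{e}ma lisse en une composition de recouvrements Nisnevich et d'\'{e}clatements \`{a} centres lisses (r\'{e}solution forte \`{a} la Hironaka); (iii) calculer $R\pi_{*}$, c'est-\`{a}-dire annuler les images directes sup\'{e}rieures $R^{q}\pi_{*}$, ce qui est en soi une r\'{e}currence sur la dimension entrelac\'{e}e avec l'\'{e}nonc\'{e} principal. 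Le th\'{e}or\`{e}me d'acyclicit\'{e} seul ne referme aucun de ces trois points; ils constituent pr\'{e}cis\'{e}ment le contenu de [32], section 4.1, et votre r\'{e}daction les laisse ouverts.
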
$\\$
 
 On en d\'{e}duit le corollaire suivant:
 $\\$
\textbf{ Corollaire}:
Si k admet la r\'{e}solution des singularit\'{e}s et X un k-sch\'{e}ma de type fini. Soit E un fibr\'{e} vectoriel sur X. On note $p:\mathbb{P}(E)\rightarrow X$ le fibr\'{e} projectif associ\'{e}. Alors un isomorphisme canonique dans $DM^{eff}_{-}(k)$ de la forme:\\
\begin{center}
$C_{*}(\mathbb{P}(E))=\oplus_{n=0}^{\text{dim E}-1} C_{*}(X)(n)[2n]$.
\end{center}

\begin{proof}
Le th\'{e}or\`{e}me pr\'{e}c\'{e}dent nous donne un morphisme canonique:\\

\begin{center}
$C_{*}(\mathbb{P}(E))=\oplus_{n=0}^{\text{dim E}-1} C_{*}(X)(n)[2n].$
\end{center}
 L'isomorphisme suit alors  de la r\'{e}solution des singularit\'{e}s, du triangle distingu\'{e} pour un \'{e}clatement et la proposition 8.

\end{proof}

\section{Localisation des correspondances finies}$\\$
Les d\'{e}monstrations de toutes les propositions de ce paragraphe sont  tr\`{e}s techniques et fastidieuses, on renvoit donc \`{a} [15bis].
Soit X un sch\'{e}ma. On notera $X_{x}^{h}:=Spec (\mathcal{O}^{h}_{X,x})$ o\`{u} $\mathcal{O}_{X,x}^{h}$ d\'{e}signe l'hens\'{e}lis\'{e} de $\mathcal{O}_{X,x}$ et 
 $X^{h}=\coprod_{x\in X} {X_{x}^{h}}$. $\\$Et on note $l^{h}_{x}$ le morphisme de $X_{x}^{h}$ vers X.
On a un morphisme naturel de:
\begin{center}
$\oplus Cor(\mathcal{Y},X_{x}^{h})\stackrel{\sum_{x\in X}[l^{h}_{x}]\circ -}{\rightarrow}Cor(\mathcal{Y},X)$     (1)
\end{center}

On a la proposition suivante:
\begin{lemma}(Corollaire 2.12 [15 bis])
X un S-sch\'{e}ma, $\mathcal{Y}$ un sch\'{e}ma local hens\'{e}lien. Il existe un morphisme canonique:\\
\begin{center}
$ Cor(\mathcal{Y},X)\stackrel{\sigma_{\mathcal{Y},X}}{\rightarrow}\oplus_{x\in X} Cor(\mathcal{Y},X_{x}^{h}) (2)$
\end{center}
 $\\$ avec les propri\'{e}t\'{e}s suivantes:\\
(a) $\sigma_{\mathcal{O},X^{h}}$ est une section de (1) telle que le carr\'{e} suivant:\\
\begin{center}
\xymatrix{Cor(\mathcal{O},X)\ar[d]^{Cor(\alpha,X)}\ar[r]^{\sigma_{\mathcal{O},X^{h}}}&\oplus Cor(\mathcal{O},X_{x}^{h})\ar[d]^{Cor(\alpha,X_{x}^{h})}\\Cor(\mathcal{O}',X)\ar[r]^{\sigma_{\mathcal{O}',X^{h}}}&\oplus Cor(\mathcal{O}',X_{x}^{h})}
\end{center}
 soit commutatif pour tout sch\'{e}ma local hens\'{e}lien $\mathcal{O}'$ et toute correspondance finie $\alpha\in ~Cor(\mathcal{O}',\mathcal{O})$.\\

(b) Pour $g:\mathcal{Y}\rightarrow X$ un S-morphisme suivant le point x de l'image [g] par (2) est donn\'{e} par:
\begin{center}
$\sigma_{\mathcal{O},X}([g])_{x}=\begin{cases}$
$[\bar{g}]$ \text{si} $x=\tau$ $\\ $0  \text{sinon}
$
\end{cases}$
\end{center}
$\tau$ \'{e}tant l'image du point ferm\'{e} de $\mathcal{Y}$ et $\bar{g}$ le morphisme d\'{e}duit de g.\\

\xymatrix{Spec(\mathcal{Y})\ar@/^2pc/[rr]^{g}\ar[r]^{\bar{g}} &X_{x}^{h} \ar[r]^{l_{X,x}^{h}}&X}

\end{lemma}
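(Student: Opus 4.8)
The plan is to construct $\sigma_{\mathcal{Y},X}$ on the free generators of $Cor(\mathcal{Y},X)$ and extend $\mathbb{Z}$-linearly, the whole point being to exploit that $\mathcal{Y}$ is henselian local. The structural input I would isolate first is: \emph{an integral scheme finite over a henselian local scheme is itself henselian local}. Indeed, if $W\subset\mathcal{Y}\times X$ is an elementary correspondence, then $W\to\mathcal{Y}$ is finite surjective; writing $W=\operatorname{Spec} B$ with $B$ finite over the henselian local ring of $\mathcal{Y}$, the ring $B$ is a finite product of henselian local rings, and since $W$ is integral $B$ has no nontrivial idempotent, so $W$ is henselian local with a unique closed point $w_0$. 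Let $x_W:=q(w_0)\in X$ be its image under the projection $q:W\to X$. Then $q$ induces a local homomorphism $\mathcal{O}_{X,x_W}\to B$, and since $B$ is henselian the universal property of the henselization extends it uniquely to $\mathcal{O}^h_{X,x_W}\to B$, i.e. factors $q$ as $W\to X_{x_W}^h\xrightarrow{l_{x_W}^h}X$. I would then set $\sigma_{\mathcal{Y},X}([W])_x$ equal to the cycle $[W']$ attached to the resulting map $W\to\mathcal{Y}\times_S X_{x_W}^h$ when $x=x_W$, and $0$ otherwise; this lies in $Cor(\mathcal{Y},X_x^h)$ because $W$ remains finite surjective over $\mathcal{Y}$.

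For the section property in (a), I would use that the morphism (1) is composition with the graph $[l_x^h]$, so $[l_x^h]\circ\sigma_{\mathcal{Y},X}([W])$ is the cycle-theoretic image of $[W']$ under $\operatorname{id}_{\mathcal{Y}}\times l_x^h:\mathcal{Y}\times_S X_x^h\to\mathcal{Y}\times X$. The lift $W\to\mathcal{Y}\times_S X_x^h$ is a section of a base change of the affine, hence separated, morphism $\operatorname{id}\times l_x^h$, and is therefore a closed immersion; by construction its composite with $\operatorname{id}\times l_x^h$ is the original inclusion $W\hookrightarrow\mathcal{Y}\times X$. Hence $W'\cong W$ and the image cycle is exactly $[W]$ with multiplicity one, so $(1)\circ\sigma_{\mathcal{Y},X}=\operatorname{id}$ on generators, hence everywhere.

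The commutativity of the naturality square in (a) I would reduce to the case $\alpha=[V]\in Cor(\mathcal{O}',\mathcal{O})$ elementary, so that $V$ too is henselian local and its closed point maps to the closed point of $\mathcal{O}$. One must then compare $\sigma_{\mathcal{O}',X^h}(\beta\circ\alpha)$ with $\sigma_{\mathcal{O},X^h}(\beta)$ precomposed by $\alpha$, where $\beta\circ\alpha$ is computed by $p_{13*}(p_{12}^{*}(\alpha).p_{23}^{*}(\beta))$. The heuristic reason it works is that the henselian factorization of each component produced by the composition is governed solely by where its closed point lands in $X$, and the uniqueness in the universal property of the henselization forces the two factorizations to agree; but turning this into a proof requires knowing that forming $\sigma_{\mathcal{Y},X}$ (a base change of relative cycles along the pro-\'etale map $X^h\to X$) is compatible with intersection products and proper pushforward, which is precisely the functoriality and multiplicativity of relative cycles imported from [15 bis].

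Finally, formula (b) is immediate from the construction: if $g:\mathcal{Y}\to X$ is a genuine $S$-morphism then $[g]$ is the class of its graph $\Gamma_g\cong\mathcal{Y}$, henselian local with closed point mapping to $\tau=g(y)$, the image of the closed point $y$ of $\mathcal{Y}$, and the canonical factorization of $\Gamma_g\to X$ through $X_\tau^h$ is exactly the lift $\bar g:\mathcal{Y}\to X_\tau^h$, whence $\sigma_{\mathcal{O},X}([g])_\tau=[\bar g]$ and all other components vanish. The hard part throughout is not the construction but the cycle-theoretic bookkeeping of the two middle paragraphs: controlling multiplicities under pushforward along the non-finite-type map $l_x^h$, and above all proving that the henselian-component decomposition commutes with the intersection-product composition of finite correspondences.
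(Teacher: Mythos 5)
First, a point of comparison that matters here: the paper contains no proof of this lemma at all. Section~4 opens by declaring that all proofs of that paragraph are ``tr\`{e}s techniques et fastidieuses'' and refers them wholesale to Ivorra [15 bis], where the statement is Corollaire~2.12. So your proposal can only be measured against the argument of that reference, and in its constructive part it follows the same route: an integral scheme finite over a henselian local scheme is henselian local, so the support $W$ of an elementary correspondence has a unique closed point $w_{0}$, whose image $x_{W}\in X$ together with the universal property of henselization applied to the local homomorphism $\mathcal{O}_{X,x_{W}}\to \mathcal{O}_{W}(W)$ yields the canonical factorization $W\to X^{h}_{x_{W}}\to X$; one defines $\sigma$ on generators accordingly. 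Your proof of the section property is correct (the lift $W\to \mathcal{Y}\times_{S}X^{h}_{x}$ composed with the separated morphism $\mathrm{id}\times l^{h}_{x}$ is the closed immersion $W\hookrightarrow \mathcal{Y}\times X$, hence the lift is a closed immersion and pushforward returns $[W]$ with multiplicity one), and (b) is indeed immediate from the construction.

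The genuine gap is the naturality square in (a), which is the real content of the lemma (it is what later makes the Godement transfers functorial). You reduce to elementary $\alpha=[V]$, assert that the closed point of $V$ maps to the closed point of $\mathcal{O}$, and defer the rest to [15 bis]. That assertion is not a formality; in fact, for \emph{arbitrary} henselian local $\mathcal{O}'$ the square is incompatible with (b). Take $\mathcal{O}=\mathrm{Spec}(R)$ with $R=\mathcal{O}^{h}_{\mathbb{A}^{1},0}$, $\mathcal{O}'=\mathrm{Spec}(\mathrm{Frac}(R))$ (a field, hence henselian local), $\alpha$ the graph of the generic-point inclusion $h:\mathcal{O}'\to\mathcal{O}$, and $\beta$ the graph of $g=l^{h}_{\mathbb{A}^{1},0}:\mathcal{O}\to\mathbb{A}^{1}$. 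Going right then down, (b) gives $\sigma_{\mathcal{O},X^{h}}(\beta)=[\mathrm{id}_{\mathcal{O}}]$ in the component $x=0$, hence after precomposition with $\alpha$ a nonzero class in the component $x=0$; going down then right, $\beta\circ\alpha=[g\circ h]$ and (b) places $\sigma_{\mathcal{O}',X^{h}}(\beta\circ\alpha)$ entirely in the component of the generic point of $\mathbb{A}^{1}$. The two sides disagree in the component $x=0$, so the square fails. The statement is saved only by restricting $\mathcal{O}',\mathcal{O}$ to henselizations of finite-type $S$-schemes at points (the only case the paper ever uses, and the setting of [15 bis]); there your claim does hold, but it needs an argument, e.g.: the residue field $k(w_{0})$ of the closed point of $V$ is finite over the finitely generated field $k(y')$, every subfield of a finitely generated extension is finitely generated, and the non-closed points of such a henselization have residue fields that are not finitely generated, forcing $w_{0}$ to lie over the closed point of $\mathcal{O}$. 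Even granting this, the compatibility of the decomposition with the intersection-product composition of correspondences (with its multiplicities, pushforwards along the non-finite-type maps $l^{h}_{x}$, etc.) is precisely the technical heart of [15 bis], which you invoke rather than prove. In summary: your construction, the section property and (b) are correct and coincide with the reference's approach, but the naturality part is not a proof, and as literally stated it is not even true without restricting the class of henselian local schemes.
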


On en d\'{e}duit donc qu'\'{e}tant donn\'{e}e une correspondance $\alpha \in Cor(X,Y)$, \`{a} chaque point x (resp~ y) de X (resp Y), on a une correspondance finie $\alpha_{x,y}$ de $Cor(X^{h}_{x},Y^{h}_{y})$ donn\'{e}e par:\\
$\alpha_{x,y}:=\sigma_{X^{h}_{x},Y}(\alpha\circ [l^{h}_{x}])_{y}$. On obtient alors la d\'{e}composition locale suivante:
\begin{center}
$\alpha\circ [l^{h}_{X,x}]=\sum_{y\in Y}{[l^{h}_{Y,y}]\circ \alpha_{x,y}}$
\end{center}

Dans cette proposition, on \'{e}tudie le comportement de cette d\'{e}composition par rapport \`{a} la composition et au produit tensoriel des correspondances.

\begin{proposition}(Prop 2.12)
\begin{center}
(i) Soit $\alpha$, $\beta$ dans $Cor(X,Y)$ et $Cor(Y,Z)$, on a pour tout point $x\in X$ et $z\in Z$ l'\'{e}galit\'{e}:
$(\beta\circ\alpha)_{x,z}=\sum_{y\in Y}{\beta_{y,z}\circ\alpha_{x,y}}$.
\end{center}

(ii) Soient X, Y, X', Y' des S-sch\'{e}mas, $\alpha \in Cor(X,X')$ et $\beta \in Cor(Y,Y')$. Pour tout point e de $X\times Y$, tout point x' de X' et y' de Y'.\\
Par la propri\'{e}t\'{e} universelle de l'hens\'{e}lisation, \'{e}tant fix\'{e} un point e de $X\times Y$ d'image x et y, on a une factorisation:$\\$

\begin{center}
\xymatrix{(X\times Y)^{h}_{e}\ar[d]\ar[r]^{m_{X,Y,e}}&X_{x}^{h}\times Y^{h}_{y}\ar[d]\\(X\times Y)^{h}\ar[r]^{m_{X,Y}}&X\times Y}
 o\`{u} $m_{X,Y}$ est donn\'{e}e par le diagramme suivant:\\
 \xymatrix{(X\times Y)^{h}\ar[dr]^{l^{h}_{X\times Y}}\ar[r]^{m_{X,Y}}&X^{h}\times Y^{h}_{y}\ar[d]^{l^{h}_{X}}\times l^{h}_{Y}\\&X\times Y}
\end{center}
$\\$
 on a alors l'\'{e}galit\'{e}:
\begin{center}
$(\alpha_{x,x'}\otimes\beta_{y,y'})\circ[m_{X,Y,e}]=\sum_{e' }{[m_{X',Y',e}]\circ(\alpha\otimes\beta)_{e,e'}}$
\end{center}
$\\$
 dans  $Cor((X\times Y)_{e}^{h},X^{'h}_{x'}\times Y_{e}^{'h})$ o\`{u}  e\'{} point de $X'\times Y'$ d'image $(x',y')$.
\end{proposition}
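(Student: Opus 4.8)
Le plan est de traiter (i) et (ii) s\'{e}par\'{e}ment, en ramenant dans les deux cas tout au Lemme ci-dessus, c'est-\`{a}-dire \`{a} l'existence de la section $\sigma$, \`{a} son additivit\'{e}, au carr\'{e} de naturalit\'{e} (a) et \`{a} la d\'{e}composition locale $\alpha\circ[l^{h}_{X,x}]=\sum_{y}[l^{h}_{Y,y}]\circ\alpha_{x,y}$.

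Pour (i): j'\'{e}crirais d'abord $(\beta\circ\alpha)\circ[l^{h}_{X,x}]=\beta\circ(\alpha\circ[l^{h}_{X,x}])$ et j'ins\'{e}rerais la d\'{e}composition locale de $\alpha$, ce qui donne $\sum_{y}(\beta\circ[l^{h}_{Y,y}])\circ\alpha_{x,y}$ (somme finie, car $\alpha$ est \`{a} support fini). En appliquant $\sigma_{X^{h}_{x},Z}$ et par additivit\'{e}, le calcul de la composante en $z$ se ram\`{e}ne aux termes individuels. Pour chaque $y$, le carr\'{e} de naturalit\'{e} (a) --- appliqu\'{e} avec $\mathcal{O}'=X^{h}_{x}$, $\mathcal{O}=Y^{h}_{y}$ et la correspondance $\alpha_{x,y}\in Cor(X^{h}_{x},Y^{h}_{y})$ --- permet de sortir $\alpha_{x,y}$ de $\sigma$:
\[
\sigma_{X^{h}_{x},Z}\bigl((\beta\circ[l^{h}_{Y,y}])\circ\alpha_{x,y}\bigr)_{z}=\sigma_{Y^{h}_{y},Z}(\beta\circ[l^{h}_{Y,y}])_{z}\circ\alpha_{x,y},
\]
et le premier facteur est $\beta_{y,z}$ par d\'{e}finition. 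La sommation sur $y$ fournit l'\'{e}galit\'{e} voulue. Les seuls points \`{a} v\'{e}rifier sont que $X^{h}_{x}$ et $Y^{h}_{y}$ sont bien locaux hens\'{e}liens (pour que (a) s'applique) et la finitude des sommes.

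Pour (ii): l'id\'{e}e est que la d\'{e}composition locale est compatible au produit externe. Je partirais de la factorisation $[l^{h}_{X\times Y,e}]=([l^{h}_{X,x}]\otimes[l^{h}_{Y,y}])\circ[m_{X,Y,e}]$ issue de la propri\'{e}t\'{e} universelle de l'hens\'{e}lisation (le diagramme de l'\'{e}nonc\'{e}), et j'utiliserais la compatibilit\'{e} de $\otimes$ \`{a} la composition (structure mono\"{\i}dale sym\'{e}trique sur $SmCor/k$) pour obtenir $(\alpha\otimes\beta)\circ([l^{h}_{X,x}]\otimes[l^{h}_{Y,y}])=(\alpha\circ[l^{h}_{X,x}])\otimes(\beta\circ[l^{h}_{Y,y}])$. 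En substituant les d\'{e}compositions locales de $\alpha$ et $\beta$ puis en d\'{e}veloppant par bilin\'{e}arit\'{e} et bifonctorialit\'{e}, on trouve
\[
(\alpha\otimes\beta)\circ[l^{h}_{X\times Y,e}]=\sum_{x',y'}([l^{h}_{X',x'}]\otimes[l^{h}_{Y',y'}])\circ(\alpha_{x,x'}\otimes\beta_{y,y'})\circ[m_{X,Y,e}].
\]
Il reste alors \`{a} appliquer $\sigma_{(X\times Y)^{h}_{e},X'\times Y'}$ et \`{a} lire la composante en $e'$, en comparant avec $(\alpha\otimes\beta)_{e,e'}=\sigma((\alpha\otimes\beta)\circ[l^{h}_{X\times Y,e}])_{e'}$.

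Le point crucial --- et l'obstacle principal --- est une compatibilit\'{e} de $\sigma$ au morphisme produit $\mu_{x',y'}:=[l^{h}_{X',x'}]\otimes[l^{h}_{Y',y'}]\colon X^{'h}_{x'}\times Y^{'h}_{y'}\to X'\times Y'$. Concr\`{e}tement, pour $\delta\in Cor(\mathcal{Y},X^{'h}_{x'}\times Y^{'h}_{y'})$, je d\'{e}montrerais
\[
\sum_{e'\mapsto(x',y')}[m_{X',Y',e'}]\circ\sigma(\mu_{x',y'}\circ\delta)_{e'}=\delta,\qquad\sigma(\mu_{x',y'}\circ\delta)_{e'}=0\ \text{si}\ e'\not\mapsto(x',y').
\]
Ceci repose sur le fait que l'hens\'{e}lisation commute au produit fibr\'{e} au sens suivant: pour $w$ au-dessus de $e'$, le morphisme canonique identifie $(X^{'h}_{x'}\times Y^{'h}_{y'})^{h}_{w}\cong(X'\times Y')^{h}_{e'}$ et envoie $l^{h}_{w}$ sur $m_{X',Y',e'}$ (tandis que $\mu_{x',y'}\circ l^{h}_{w}=l^{h}_{X'\times Y',e'}$); en d\'{e}composant $\delta$ sur $X^{'h}_{x'}\times Y^{'h}_{y'}$ et en la transportant par $\mu_{x',y'}$, on exhibe $\mu_{x',y'}\circ\delta$ comme une d\'{e}composition locale sur $X'\times Y'$, et l'unicit\'{e} de telles d\'{e}compositions donne la formule. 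Cela acquis, les termes crois\'{e}s avec $(x'',y'')\neq(x',y')$ s'annulent et le terme survivant se recompose, via les $m_{X',Y',e'}$, en $(\alpha_{x,x'}\otimes\beta_{y,y'})\circ[m_{X,Y,e}]$, ce qui est exactement l'identit\'{e} annonc\'{e}e. Cette derni\`{e}re identification --- hens\'{e}lisation contre produits et unicit\'{e} des d\'{e}compositions locales --- est pr\'{e}cis\'{e}ment le c\oe ur technique et fastidieux de l'argument, renvoy\'{e} \`{a} [15bis].
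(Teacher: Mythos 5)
Votre partie (i) est correcte et est essentiellement l'argument attendu: on \'{e}crit $(\beta\circ\alpha)\circ[l^{h}_{X,x}]=\sum_{y}(\beta\circ[l^{h}_{Y,y}])\circ\alpha_{x,y}$ par la d\'{e}composition locale de $\alpha$, puis le carr\'{e} de naturalit\'{e} (a) du lemme de localisation (Corollaire 2.12 de [15 bis]), appliqu\'{e} avec $\mathcal{O}'=X^{h}_{x}$, $\mathcal{O}=Y^{h}_{y}$ et $\gamma=\beta\circ[l^{h}_{Y,y}]$, donne $\sigma_{X^{h}_{x},Z}(\gamma\circ\alpha_{x,y})_{z}=\beta_{y,z}\circ\alpha_{x,y}$; l'additivit\'{e} de $\sigma$ conclut. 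Notez par ailleurs que le texte ne d\'{e}montre pas cette proposition (toutes les preuves de cette section sont renvoy\'{e}es \`{a} [15 bis]), il n'y a donc pas de preuve interne \`{a} laquelle comparer votre r\'{e}daction.

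En revanche, votre partie (ii) contient une lacune r\'{e}elle: le lemme cl\'{e} que vous \'{e}noncez pour $\delta$ \emph{arbitraire} dans $Cor(\mathcal{Y},X^{'h}_{x'}\times Y^{'h}_{y'})$ est faux. Prenez pour $\delta=[g]$ le graphe d'un morphisme $g$ d'un sch\'{e}ma local hens\'{e}lien $\mathcal{Y}$ dont le point ferm\'{e} s'envoie sur un point $w$ de $X^{'h}_{x'}\times Y^{'h}_{y'}$ qui n'est pas au-dessus de $(x',y')$ (par exemple, pour $X'=Y'=\mathbb{A}^{1}$, $x'=y'=0$, le point donn\'{e} par le noyau de la multiplication $\mathcal{O}^{h}\otimes\mathcal{O}^{h}\rightarrow\mathcal{O}^{h}$). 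Par la partie (b) du lemme, $\sigma(\mu_{x',y'}\circ\delta)$ est concentr\'{e} en $e'_{0}=\mu_{x',y'}(w)$, et $e'_{0}$ n'est \emph{pas} au-dessus de $(x',y')$: la fibre de $X^{'h}_{x'}\rightarrow X'$ au-dessus de $x'$ est r\'{e}duite au point ferm\'{e} (car $\mathcal{O}^{h}/\mathfrak{m}\mathcal{O}^{h}=k(x')$), donc la projection de $w$, si elle n'est pas le point ferm\'{e}, ne peut pas s'envoyer sur $x'$. Vos deux formules (annulation et r\'{e}traction) \'{e}chouent donc pour ce $\delta$, et l'isomorphisme $(X^{'h}_{x'}\times Y^{'h}_{y'})^{h}_{w}\cong(X'\times Y')^{h}_{e'}$ \'{e}choue aussi pour ce $w$ (dans l'exemple, les corps r\'{e}siduels sont $\mathrm{Frac}(\mathcal{O}^{h})$ et $k(t)$, qui diff\`{e}rent). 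Ce qui sauve l'argument dans l'application, et que vous n'\'{e}noncez ni n'utilisez, est une propri\'{e}t\'{e} sp\'{e}ciale des composantes produites par $\sigma$: par construction, le support de $\alpha_{x,x'}=\sigma_{X^{h}_{x},X'}(\alpha\circ[l^{h}_{X,x}])_{x'}$ est une r\'{e}union finie de sch\'{e}mas locaux hens\'{e}liens, finis sur $X^{h}_{x}$, dont les points ferm\'{e}s s'envoient sur le point ferm\'{e} de $X^{'h}_{x'}$ (la factorisation par l'hens\'{e}lis\'{e} est un morphisme local). Il en r\'{e}sulte que le support de $\delta=(\alpha_{x,x'}\otimes\beta_{y,y'})\circ[m_{X,Y,e}]$ a tous ses points ferm\'{e}s au-dessus de la fibre en $(x',y')$, et c'est seulement pour de tels $\delta$ que la d\'{e}composition de $\mu_{x',y'}\circ\delta$ ne fait intervenir que des $e'$ au-dessus de $(x',y')$, et que, pour un tel $e'$, la fibre de $\mu_{x',y'}$ en $e'$ est un unique point $w$ avec $k(w)=k(e')$, le morphisme \'{e}tant ind-\'{e}tale, ce qui fournit alors bien votre identification des hens\'{e}lis\'{e}s et $[m_{X',Y',e'}]$ comme image de $[l^{h}_{w}]$. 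Il faut donc restreindre votre lemme \`{a} cette classe de correspondances et v\'{e}rifier sa stabilit\'{e} sous les op\'{e}rations effectu\'{e}es; cette analyse des supports est pr\'{e}cis\'{e}ment le coeur technique et fastidieux que le texte renvoie \`{a} [15 bis].
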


Etant donn\'{e} un S-sch\'{e}ma X et un syst\`{e}me projectif de S-sch\'{e}mas $(U_{\lambda})_{\lambda \in \Lambda}$ avec $\Lambda$ une cat\'{e}gorie filtrante. On pose $\mathcal{U}=\varprojlim U_{\lambda}$ et 
$Cor\{\mathcal{U},X):=\varinjlim_{\lambda\in\Lambda^{op}}Cor(U_{\lambda},X)$. Si F est un pr\'{e}faisceau, on d\'{e}finit $F\{\mathcal{U}\}:=\varinjlim_{\lambda\in\Lambda^{op}}F(U_{\lambda})$.

\begin{proposition}(Prop 2.7)
Etant donn\'{e} un S-sch\'{e}ma X et un syst\`{e}me projectif de S-schémas $(U_{\lambda})_{\lambda \in \Lambda}$  de limite projective $\mathcal{U}$.\\
 Il existe d'uniques morphismes:$\\$
\begin{center}
$\{\sigma\}^{h}_{\mathcal{U},X,n}:\mathbb{Z}_{tr}[(X^{h})^{n}_{X}]\{\mathcal{U}\}\rightarrow \mathbb{Z}_{tr}[(X^{h})^{n+1}_{X}]\{\mathcal{U}\}$ $n\geq 0$
\end{center}
 satisfaisant les propri\'{e}t\'{e}s suivantes:
$\\$
(i) (Homotopie) Pour tout n, on a les relations:
\begin{center}
$d_{n+1}\circ\{\sigma\}^{h}_{\mathcal{U},X,n}+\{\sigma\}^{h}_{\mathcal{U},X,n-1}\circ d_{n}=id$.
\end{center}

(ii) On a un diagrammme commutatif:$\\$
\begin{center}
\xymatrix{\mathbb{Z}_{tr}[(X^{h})^{n}_{X}]\{\mathcal{U}\}\ar[d]
\ar[r]^{\{\sigma\}^{h}_{\mathcal{U},X,n}}&\mathbb{Z}_{tr}[(X^{h})^{n+1}_{X}]\{\mathcal{U}\}\ar[d]\\\mathbb{Z}_{tr}[(X^{h})^{n}_{X}](\mathcal{U})\ar[r]^{\sigma^{h}_{\mathcal{U},X,n}}&\mathbb{Z}_{tr}[(X^{h})^{n+1}_{X}](U)}
\end{center}

\end{proposition}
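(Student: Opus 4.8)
The plan is to build the homotopy $\{\sigma\}^h$ out of the local section $\sigma_{\mathcal{Y},X}$ of the preceding lemma (Corollaire 2.12), via the same extra-degeneracy mechanism that produced the contracting homotopy in the Cech resolution lemma recalled above (Prop. 6.12 de [21]). First I would isolate the underlying extra degeneracy. The scheme $(X^h)^{\bullet+1}_X$ is the Cech nerve of the cover $X^h\rightarrow X$, whose face maps $\partial_i$ give the differentials $d_n=\sum_i(-1)^i\partial_i$. For a henselian local $\mathcal{Y}$, the section $\sigma_{\mathcal{Y},X}:Cor(\mathcal{Y},X)\rightarrow\oplus_{x}Cor(\mathcal{Y},X^h_x)=Cor(\mathcal{Y},X^h)$ lifts a correspondence into the cover; using the fiber-product description of $(X^h)^{n}_X$ one iterates it into canonical maps $s_n:\mathbb{Z}_{tr}[(X^h)^{n}_X](\mathcal{Y})\rightarrow\mathbb{Z}_{tr}[(X^h)^{n+1}_X](\mathcal{Y})$. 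One then sets $\sigma^h_{\mathcal{U},X,n}$ equal to $\pm s_n$ at the level of the limit $\mathcal{U}$.

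Next I would check the homotopy identity (i). Property (a) of Corollaire 2.12 says precisely that $\sigma_{\mathcal{Y},X}$ is a section of the canonical map (1) and is functorial; this translates into the standard relations of an extra degeneracy, namely $\partial_0 s=\mathrm{id}$ and $\partial_i s=s\,\partial_{i-1}$ for $i\geq1$. Substituting these into $d=\sum_i(-1)^i\partial_i$ produces the telescoping identity $d_{n+1}\circ\sigma_n+\sigma_{n-1}\circ d_n=\mathrm{id}$, the boundary case $n=0$ being covered by the fact that $\sigma_{\mathcal{Y},X}$ splits the augmentation towards $\mathbb{Z}_{tr}(X)$. Property (b), which gives the explicit value $\sigma_{\mathcal{O},X}([g])_x$, is what lets me evaluate $\partial_0 s$ on generators.

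Then comes the passage to the colimit, which is the real content of the statement. A finite stage $U_\lambda$ is not henselian local, so $\sigma^h$ is not literally defined on $\mathbb{Z}_{tr}[\cdots](U_\lambda)$; but any element of $\mathbb{Z}_{tr}[(X^h)^n_X]\{\mathcal{U}\}=\varinjlim_\lambda Cor(U_\lambda,(X^h)^n_X)$ is represented by finitely many finite surjective components, each of which becomes split after henselization of the limit. By continuity of finite correspondences along the projective system (finite presentation, cf. EGA IV, 8), such a splitting spreads out to some $U_\mu$ with $\mu\geq\lambda$, and the functoriality square of property (a)---applied to the transition morphisms---shows that these stagewise lifts are compatible in the colimit. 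This defines $\{\sigma\}^h_{\mathcal{U},X,n}$, makes diagram (ii) commute by construction, and, filtered colimits being exact, propagates the identity (i) from the limit-level $\sigma^h_{\mathcal{U},X,n}$ to the curly version.

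Finally, uniqueness should follow from (ii): the vertical comparison $\varinjlim_\lambda F(U_\lambda)\rightarrow F(\mathcal{U})$ is injective on the relevant pieces by the same continuity argument, so commutativity of (ii) together with the fixed datum $\sigma^h_{\mathcal{U},X,n}$ determines $\{\sigma\}^h_{\mathcal{U},X,n}$ completely. The hard part will be step three: verifying that the purely local lifts $\sigma$ are compatible with the transition maps of the projective system and descend to finite stages. This rests squarely on the behaviour of $\sigma$ under composition recorded in Proposition 2.12(i), which is exactly what guarantees that the extra-degeneracy relations are preserved in the colimit.
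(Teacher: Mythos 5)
The paper does not prove this proposition at all: the section it sits in opens by declaring that all proofs there are ``tr\`es techniques et fastidieuses'' and refers wholesale to Ivorra [15bis], of which this statement is a verbatim quotation (Prop.~2.7). So there is no internal argument to compare yours against; it can only be judged against what such a proof must contain. On that score your architecture is the right one, and essentially the one Ivorra's formalism imposes: build the extra degeneracy of the \v{C}ech nerve of $X^{h}\rightarrow X$ out of the henselian section $\sigma_{\mathcal{Y},X}$, get the homotopy identity (i) by the telescoping simplicial relations, obtain existence at the curly (colimit) level by a spreading-out argument, and deduce uniqueness --- and the propagation of (i) from the bottom row to the top row --- from injectivity of the vertical comparison maps in (ii).

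Two points, however, are asserted where they carry the real weight. First, every step of your argument --- the very existence of the bottom map $\sigma^{h}_{\mathcal{U},X,n}$, the splitting of the supports over $\mathcal{U}$, and the injectivity you invoke for uniqueness --- requires that $(U_{\lambda})$ be a filtered system with affine \'etale (at least flat) transition morphisms whose limit $\mathcal{U}$ is henselien local (the intended case: Nisnevich neighbourhoods of a point). The statement as transcribed says ``syst\`eme projectif'' with no hypothesis, under which reading diagram (ii) is not even defined and your injectivity claim fails; you implicitly place yourself in the correct setting, but this has to be said. Second, two steps are named rather than proved: (a) the relations $\partial_{0}s=\mathrm{id}$, $\partial_{i}s=s\,\partial_{i-1}$ do not follow from property (a) of Corollaire 2.12 alone (being a functorial section is weaker); they follow from the explicit construction of $s$ by lifting the support $Z$ (finite over $\mathcal{U}$, hence henselian) through $X^{h}\rightarrow X$, combined with the uniqueness clause in the universal property of henselization. (b) The descent to a finite stage is not a black-box ``continuity of finite correspondences'': what EGA IV~8.14 descends is the \emph{morphism} $Z=\varprojlim_{\mu}Z_{\mu}\rightarrow V$ from the support to a finite-type Nisnevich neighbourhood $V$, the cycle being transported afterwards along its graph; and independence of all the choices (of $\lambda$, $\mu$, $V$, representative) is again a consequence of the injectivity of the vertical maps, which itself needs an argument (flat pullback along a limit of \'etale maps sends distinct integral components to cycles without common components, because generic points map to generic points). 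None of this breaks your outline --- with these repairs it becomes a complete proof along the expected lines --- but as written it is a correct strategy with the decisive verifications left as assertions.
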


Avec les mêmes notations que ci-dessus, on obtient que pour $\alpha\in c_{S}\{\mathcal{U},X\}$, il existe un unique \'{e}l\'{e}ment $\{\alpha\}_{x}$ de $c_{S}\{\mathcal{U},X)$ verifiant:\\
\begin{center}
$\alpha:=\sum{[l^{h}_{X,x}]\circ\{\alpha\}_{x}}$
\end{center}
D\'{e}signons par $\{l\}^{h}_{X,x}$ l'\'{e}lement de $c_{S}\{X^{h}_{x},X)$ induit par les morphismes $U\rightarrow X$ pour U voisinage Nisnevich de x. Une correspondance finie $\alpha\in c_{S}(X,Y)$   admet une d\'{e}composition locale raffin\'{e}e de la forme:\\
\begin{center}
$\alpha\circ\{l\}^{h}_{X,x}=\sum{[l^{h}_{Y,y}]\circ\{\alpha\}_{x,y}}$
\end{center}
 où $\{\alpha\}_{x,y}\in Cor\{X^{h}_{x},Y^{h}_{y})$.

On a les propositions analogues pour la d\'{e}composition raffin\'{e}e.
\begin{proposition}(Prop 2.17)
$\\$
(i) Soit $\alpha$, $\beta$ dans $Cor(X,Y)$ et $Cor(Y,Z)$, on a pour tout point $x\in X$ et $z\in Z$ l'\'{e}galit\'{e}:
\begin{center}
$\{\beta\circ\alpha\}_{x,z}=\sum_{y\in Y}{\beta_{y,z}\circ\{\alpha\}_{x,y}}$.
\end{center}

$\\$

(ii) Soient X, Y, X', Y' des S-sch\'{e}mas,$\alpha \in Cor(X,X')$ et $\beta \in c_{S}(Y,Y')$. Pour tout point e de $X\times Y$, tout point x' de X' et y' de Y'.

 On a alors l'\'{e}galit\'{e}:\\
\begin{center}
$(\{\alpha\}_{x,x'}\otimes\{\beta\}_{y,y'})\circ[m_{X,Y,e}]=\sum_{e'}[m_{X',Y',e}]\circ\{\alpha\otimes\beta\}_{e,e'}$
\end{center}
o\`{u} e' point de $X'\times Y'$ d'image $(x',y')$ dans  $Cor((X\times Y)_{e}^{h},X^{'h}_{x'}\times Y_{y'}^{'h})$.
  \end{proposition}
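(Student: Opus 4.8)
Les deux identit\'{e}s sont les analogues raffin\'{e}s des formules de la Proposition (Prop 2.12), et le plan est de les en d\'{e}duire par passage \`{a} la colimite filtrante sur les voisinages Nisnevich du point source, en s'appuyant sur la caract\'{e}risation de la d\'{e}composition raffin\'{e}e fournie par la Proposition (Prop 2.7). Je commencerais par rappeler que, par construction, $Cor\{X_{x}^{h},-)=\varinjlim_{U}Cor(U,-)$, la colimite \'{e}tant prise sur le syst\`{e}me cofiltrant des voisinages Nisnevich $U$ de $x$ de limite $X_{x}^{h}$. Ainsi toute correspondance raffin\'{e}e $\{\alpha\}_{x,y}\in Cor\{X_{x}^{h},Y_{y}^{h})$ est repr\'{e}sent\'{e}e par une correspondance finie sur un certain $U$, et la clause d'unicit\'{e} de (Prop 2.7) assure que la d\'{e}composition raffin\'{e}e ne d\'{e}pend pas du repr\'{e}sentant et qu'elle est compatible, via le carr\'{e} commutatif (ii), \`{a} la d\'{e}composition ordinaire $\alpha_{x,y}$ sous l'application canonique $Cor\{X_{x}^{h},Y_{y}^{h})\rightarrow Cor(X_{x}^{h},Y_{y}^{h})$.

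Pour (i), les deux membres vivent dans $Cor\{X_{x}^{h},Z_{z}^{h})$. Le support de $\alpha$ \'{e}tant fini sur $X$, seuls un nombre fini des $\{\alpha\}_{x,y}$ sont non nuls, de sorte que la somme est finie et commute \`{a} la colimite filtrante. Je choisirais alors un voisinage Nisnevich $U$ de $x$ sur lequel des repr\'{e}sentants de toutes les correspondances en jeu sont simultan\'{e}ment d\'{e}finis; sur $U$, l'identit\'{e} se r\'{e}duit exactement \`{a} la relation de composition de (Prop 2.12)(i) appliqu\'{e}e aux correspondances restreintes, puisque la composition des correspondances finies commute \`{a} la restriction \`{a} $U$ et aux op\'{e}rateurs de d\'{e}composition locale, ce qui est pr\'{e}cis\'{e}ment le contenu du carr\'{e} de (Prop 2.7)(ii). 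Le passage \`{a} la colimite sur $U$ livre alors l'identit\'{e} raffin\'{e}e.

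Pour (ii), je raisonnerais de m\^{e}me en rempla\c{c}ant la composition par le produit tensoriel externe et en travaillant sur les voisinages Nisnevich du point $e$ de $X\times Y$: les morphismes de factorisation $m_{X,Y,e}$ et $m_{X',Y',e}$ sont d\'{e}j\`{a} disponibles \`{a} chaque niveau fini, le produit tensoriel des correspondances finies est compatible \`{a} la restriction et aux colimites filtrantes, et la somme sur $e'$ est encore finie; l'identit\'{e} raffin\'{e}e descend donc de (Prop 2.12)(ii) par le m\^{e}me argument de colimite.

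L'obstacle principal r\'{e}sidera dans la r\'{e}duction \`{a} un voisinage Nisnevich commun: il faut v\'{e}rifier que les divers repr\'{e}sentants (de $\alpha$, de $\beta$, de leur compos\'{e}e ou de leur produit, ainsi que des morphismes de section $\{l\}_{X,x}^{h}$) peuvent \^{e}tre d\'{e}finis simultan\'{e}ment sur un m\^{e}me $U$, et que la d\'{e}composition ordinaire de (Prop 2.12) est bien compatible aux morphismes de restriction du pro-syst\`{e}me, compatibilit\'{e} qui est exactement encod\'{e}e dans la commutativit\'{e} de (Prop 2.7)(ii). Une fois cette compatibilit\'{e} acquise, les deux identit\'{e}s sont des cons\'{e}quences formelles de leurs analogues non raffin\'{e}s.
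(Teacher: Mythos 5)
The paper itself offers no proof of this statement: the section opens by declaring that all proofs of this paragraph are deferred to Ivorra [15 bis], so your attempt can only be compared with the argument that reference (and internal coherence) demands. Judged that way, your proposal has a genuine gap at its central step, the \emph{reduction to a common Nisnevich neighborhood}. Proposition (Prop 2.12) is not a statement at finite level: it concerns correspondances out of the henselizations $X_{x}^{h}$, and the decomposition operators $\alpha\mapsto\alpha_{x,y}$ simply do not exist on a finite-level neighborhood $U$ (their construction, via $\sigma_{\mathcal{Y},X}$, uses the henselian property of the source in an essential way). So the phrase ``sur $U$, l'identit\'{e} se r\'{e}duit exactement \`{a} (Prop 2.12)(i) appliqu\'{e}e aux correspondances restreintes'' does not parse. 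What Prop 2.12 together with the compatibility square of (Prop 2.7)(ii) actually gives you is the equality of the \emph{images} of the two sides of (i) under the canonical comparison map $Cor\{X_{x}^{h},Z_{z}^{h})=\varinjlim_{U}Cor(U,Z_{z}^{h})\rightarrow Cor(X_{x}^{h},Z_{z}^{h})$. To promote this to an equality in the colimit — that is, at some finite level, which is what the refined statement asserts — you would need this comparison map to be injective. That injectivity is nowhere established and is not a formality: the target $Z_{z}^{h}$ is not of finite type over $S$, so the usual continuity property of finite correspondences does not apply; the refined decomposition was introduced by Ivorra precisely because such comparison maps are not under control.

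The argument that does work goes in the opposite direction and is shorter: it rests on the \emph{uniqueness} clause of the refined decomposition (the statement following Prop 2.7), not on descent from the henselian limit. For (i), choose a level $U$ representing the finitely many nonzero $\{\alpha\}_{x,y}$; then, by associativity at finite level, the refined decomposition of $\alpha$ and the ordinary decomposition of $\beta$,
\begin{center}
$(\beta\circ\alpha)\circ\{l\}^{h}_{X,x}=\beta\circ\Bigl(\sum_{y}[l^{h}_{Y,y}]\circ\{\alpha\}_{x,y}\Bigr)=\sum_{y}\bigl(\beta\circ[l^{h}_{Y,y}]\bigr)\circ\{\alpha\}_{x,y}=\sum_{z}[l^{h}_{Z,z}]\circ\Bigl(\sum_{y}\beta_{y,z}\circ\{\alpha\}_{x,y}\Bigr)$,
\end{center}
where the last equality uses $\beta\circ[l^{h}_{Y,y}]=\sum_{z}[l^{h}_{Z,z}]\circ\beta_{y,z}$. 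The family $\bigl(\sum_{y}\beta_{y,z}\circ\{\alpha\}_{x,y}\bigr)_{z}$ is therefore a refined decomposition of $(\beta\circ\alpha)\circ\{l\}^{h}_{X,x}$, and uniqueness forces it to coincide with $\bigl(\{\beta\circ\alpha\}_{x,z}\bigr)_{z}$. Note that this also explains the asymmetry of the statement — ordinary $\beta_{y,z}$ against refined $\{\alpha\}_{x,y}$ — which your colimit argument does not account for. Part (ii) follows by the same uniqueness argument, with composition replaced by the external tensor product and the comparison morphisms $m_{X,Y,e}$, $m_{X',Y',e}$ inserted.
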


\section{R\'{e}solution de Godement}
Dans la suite de cette section nous fixons une cat\'{e}gorie $S = Sch/S$, $Var/S$ ou $Sm/S$,
tous les pr\'{e}faisceaux sont d\'{e}finis sur S et nous renvoyons \`{a} la  d\'{e}finition 1.10
pour la notion de faisceaux avec transferts.$\\$ A nouveau, on renvoie \`{a} Ivorra [15 bis].
Rappelons qu'une monade est la donn\'{e}e d'un endofoncteur M dans une cat\'{e}gorie C et de transformations naturelles $\mu:MM\rightarrow M$ et $\eta:id\rightarrow M$ avec les diagrammes commutatifs suivants:$\\$

\begin{center}

\xymatrix{M\ar[r]^{\eta M}\ar[rd]^{id}&MM\ar[d]^{\mu}&M\ar[ld]^{id}\ar[l]_{M\eta}\\&M}
\end{center}
\begin{center}
\xymatrix{MMM\ar[d]^{\mu M}\ar[r]^{M\mu}&MM\ar[d]^{\mu}\\MM\ar[r]^{\mu}&M}
\end{center}

$\\$
A partir d'une monade $(M,\mu,\eta)$, \`{a} chaque objet C de $\mathcal{C}$, on construit un objet cosimplicial $B^{\bullet}(M,C)$ de $\mathcal{C}$ avec une coaugmentation de C dans ce dernier. $\\$Les n-cosimplexes sont donn\'{e}s par $ M^{n+1}C$ de $\mathcal{C}$, les cofaces par les morphismes:
\begin{center}
$\delta^{n-1}_{i}:=M^{i}\eta M^{n-i}:M^{n}C\rightarrow M^{n+1}C$       $i=0...n$
\end{center}
\text{et les cod\'{e}g\'{e}n\'{e}rescences par}:$\\$
\begin{center}
$\sigma^{n}_{i}:=M^{i}\eta M^{n-1-i}:M^{n+1}C\rightarrow M^{n}C$      $i=0...n-1$.
\end{center}

Nous allons maintenant appliquer les r\'{e}sultats concernant la localisation Nisnevich des correspondances finies que nous avons obtenus pr\'{e}cédemment.
 Dans la suite, nous d\'{e}signons par $s_{x}$ la composante suivant le point x d'un
el\'{e}ment s du produit 
\begin{center}
$\prod _{x\in X}F\{X_{x}^{h}\}$. 
	
\end{center}

Par d\'{e}finition $F\{X_{x}^{h}\}$
 correspond \`{a} la fibre Nisnevich de F au point x.
D\'{e}signons  $\mathcal{G}_{Nis}$ la monade de la cat\'{e}gorie des faisceaux Nisnevich d\'{e}finie de la mani\`{e}re suivante:
$\\$

(i) Les sections sur X du faisceau $G_{Nis}F$ sont donn\'{e}s par:\\
\begin{center}
$\mathcal{G}_{Nis}F(X)=\prod_{x\in X} F\{X_{x}^{h}\}$
\end{center}

les morphismes structuraux \'{e}tant d\'{e}termin\'{e}s par les \'{e}galit\'{e}s:\\
$[\mathcal{G}_{Nis}F(p)(s)]_{x}=F\{\{p\}_{x}^{h}\}(s_{p(x)})$

avec x un point de X, $p:X\rightarrow Y$ et $s\in G_{Nis}F(Y)$.
$\\$$\\$

(ii) Le morphisme structural $\eta_{F}$ est donn\'{e} par:\\
$[\eta_{F}(s)]_{x}=F\{{l}_{x}^{h}\}(s)$ par un \'{e}lement s de F(X).
$\\$
$\\$

(iii) Le morphisme structural $\mu_{F}$ coïncide avec la projection sur les composantes associ\'{e}s aux points ferm\'{e}s des $X_{x}^{h}$ via le morphisme naturel:\\
$(\mathcal{G}_{Nis}G_{Nis}F)(X)=\prod_{x\in X} \prod_{z \in X_{x}^{h}} F\{(X_{x}^{h})_{z}^{h}\}$.
$\\$
$\\$
La r\'{e}solution cosimpliciale de Godement est par d\'{e}finition le faisceau Nisnevich simplicial:\\
$\mathcal{G}^{\bullet}F=B^{\bullet}(\mathcal{G}_{Nis},F)$.
On note ensuite $G_{Nis}^{\bullet}F$ le complexe qui s'en d\'{e}duit dont les diff\'{e}rentielles sont les sommes altern\'{e}es des cofaces.

\begin{proposition}
La famille form\'{e}e des foncteurs fibres $F\rightarrow F\{X_{x}^{h}\}$ est conservative et le morphisme d'augmentation $F\rightarrow G_{Nis}^{\bullet}F$ est un quasi-isomorphisme de complexes de faisceaux Nisnevich.
\end{proposition}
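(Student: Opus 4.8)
The plan is to prove the two assertions in turn, using the conservativity of the fibers to reduce the quasi-isomorphism to a purely homological statement about complexes of abelian groups, where a contracting homotopy is available from the monad structure.

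\textbf{Conservativity.} As recalled in the remark above, the points of the Nisnevich topos are the henselian local rings, and the functors $F\mapsto F\{X_x^h\}$ are exactly the corresponding fiber functors. I would argue that, the Nisnevich topos having enough points, a Nisnevich sheaf is zero if and only if all its fibers $\{X_x^h\}$ vanish; hence a morphism of Nisnevich sheaves is an isomorphism, and a complex of Nisnevich sheaves is acyclic, precisely when this holds on every henselian fiber. Since each fiber is a filtered colimit $F\{X_x^h\}=\varinjlim_{U\ni x}F(U)$ over Nisnevich neighborhoods and filtered colimits are exact, taking fibers commutes with cohomology; thus quasi-isomorphism of complexes of Nisnevich sheaves may be tested fiberwise. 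The transfer structure plays no role here: it is carried along by the constructions of the previous section (the refined local decompositions are exactly what makes $\mathcal{G}_{Nis}F$ and its structure maps live among sheaves with transfers), so everything can be checked on the underlying Nisnevich sheaves.

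\textbf{Reduction and the contracting homotopy.} By the first part it suffices to show, for each $x$, that the augmentation $F\{X_x^h\}\to (G_{Nis}^\bullet F)\{X_x^h\}$ is a quasi-isomorphism of complexes of abelian groups. I would identify $\mathcal{G}_{Nis}=g\circ u$ as the monad attached to the adjunction $u\dashv g$, where $u\colon F\mapsto (F\{X_x^h\})_x$ is the family of fiber functors and $g$ is the flasque Godement sheaf reconstructed from fiber data; the unit of this adjunction is exactly the $\eta$ defined by $[\eta_F(s)]_x=F\{l_x^h\}(s)$, while $\mu$ is $g$ applied to the counit $\epsilon\colon ug\to 1$, which is the projection onto the closed-point components described in item (iii). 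The key classical fact is that applying the left adjoint $u$ to the coaugmented cosimplicial object $F\to B^\bullet(\mathcal{G}_{Nis},F)$ produces a split coaugmentation: writing $u\,\mathcal{G}_{Nis}^{n+1}F=(ug)^{n+1}uF=S^{n+1}(uF)$ for the comonad $S=ug$, the counit provides an extra codegeneracy $h^n=\epsilon_{S^n uF}\colon S^{n+1}uF\to S^n uF$.

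\textbf{Carrying it out and the main obstacle.} One then checks the cosimplicial identities relating this extra term $h$ to the cofaces $u\delta_i$ and codegeneracies $u\sigma_i$; these are nothing but the unit and associativity axioms of the monad already displayed in the defining diagrams. The extra codegeneracy furnishes a contracting homotopy on the associated cochain complex, giving $dh+hd=\mathrm{id}$ in positive degrees and identifying $H^0$ with $uF$ via $\eta$ while $H^{i}=0$ for $i>0$. This says precisely that $F\{X_x^h\}\to (G_{Nis}^\bullet F)\{X_x^h\}$ is a quasi-isomorphism at every fiber, whence by conservativity the augmentation $F\to G_{Nis}^\bullet F$ is a quasi-isomorphism of complexes of Nisnevich sheaves. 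The hard part will be the bookkeeping in this middle step: matching the explicit section-level formulas for $\eta$ and $\mu$ (and the henselization machinery $\{l\}_x^h$, $\{\alpha\}_{x,y}$ of the localization section) with the abstract counit $\epsilon$, so that $\epsilon$ genuinely defines an extra codegeneracy and the relevant simplicial identities hold. Once $\mathcal{G}_{Nis}$ is recognized as the monad of a true adjunction whose counit is the projection onto closed-point fibers, the contracting homotopy, and hence the whole statement, is formal.
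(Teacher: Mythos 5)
Your argument is correct, and it is essentially the proof that the paper itself omits: the paper's entire ``proof'' is the citation ``cf.\ [20] Partie III'' (Levine), where precisely this reasoning is carried out --- conservativity and exactness of the henselian stalk functors for the Nisnevich topology (enough points, stalks as filtered colimits), followed by the stalkwise contracting homotopy coming from the extra codegeneracy that the counit $\epsilon\colon ug\to 1$ of the adjunction $u\dashv g$ provides on $u\,B^{\bullet}(\mathcal{G}_{Nis},F)=(ug)^{\bullet+1}uF$. Your one implicit point worth keeping in mind is that this identification is a purely formal re-association of functors, so no commutation of stalks with the infinite products defining $\mathcal{G}_{Nis}$ is ever needed; with that observed, the proof is complete and matches the cited source.
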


\begin{proof}
cf [20] Partie III.
\end{proof}

\begin{proposition}([15bis] Prop 3.8)
Il existe une monade canonique $\mathcal{G}_{Nis}^{tr}$ de la cat\'{e}gorie $Sh_{Nis}^{tr}(\mathcal{S})$ rendant le diagramme suivant commutatif:\\
$\xymatrix{Sh_{Nis}^{tr}(\mathcal{S})\ar[d]^{\mathcal{G}_{Nis}^{tr}}\ar[r]&Sh_{Nis}(\mathcal{S})\ar[d]^{\mathcal{G}_{Nis}}\\Sh_{Nis}^{tr}(\mathcal{S})\ar[r]&Sh_{Nis}(\mathcal{S})}$

\end{proposition}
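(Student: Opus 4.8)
Le plan est de donner \`{a} $\mathcal{G}_{Nis}^{tr}F$ le m\^{e}me faisceau Nisnevich sous-jacent que $\mathcal{G}_{Nis}F$, soit $\mathcal{G}_{Nis}^{tr}F(X)=\prod_{x\in X}F\{X_{x}^{h}\}$, et de le munir de transferts \`{a} l'aide de la d\'{e}composition locale raffin\'{e}e des correspondances finies fournie par la proposition 2.7. Rappelons que toute correspondance finie $\alpha\in Cor(X,Y)$ admet une unique d\'{e}composition $\alpha\circ\{l\}^{h}_{X,x}=\sum_{y}[l^{h}_{Y,y}]\circ\{\alpha\}_{x,y}$ avec $\{\alpha\}_{x,y}\in Cor\{X_{x}^{h},Y_{y}^{h})$. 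Pour $s=(s_{y})_{y\in Y}\in\mathcal{G}_{Nis}F(Y)$, je poserais
\begin{center}
$[\mathcal{G}_{Nis}^{tr}F(\alpha)(s)]_{x}=\sum_{y\in Y}F\{\{\alpha\}_{x,y}\}(s_{y})$,
\end{center}
o\`{u} $F\{\{\alpha\}_{x,y}\}:F\{Y_{y}^{h}\}\rightarrow F\{X_{x}^{h}\}$ d\'{e}signe le transfert induit sur les fibres. La somme est finie point par point car le support de $\alpha$ est fini sur $X$, donc seuls un nombre fini de $y$ contribuent au-dessus d'un $x$ donn\'{e}.

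La premi\`{e}re chose \`{a} v\'{e}rifier est que ceci fait bien de $\mathcal{G}_{Nis}^{tr}F$ un faisceau avec transferts, c'est-\`{a}-dire la fonctorialit\'{e} contravariante de $\alpha\mapsto\mathcal{G}_{Nis}^{tr}F(\alpha)$ et sa compatibilit\'{e} avec la structure de pr\'{e}faisceau d\'{e}j\`{a} port\'{e}e par $\mathcal{G}_{Nis}F$ (l'action sur les morphismes de faisceaux avec transferts restant celle de $\mathcal{G}_{Nis}$, compatible aux transferts par naturalit\'{e} de la d\'{e}composition locale). La fonctorialit\'{e} r\'{e}sulte de la formule de composition 2.17(i), $\{\beta\circ\alpha\}_{x,z}=\sum_{y}\beta_{y,z}\circ\{\alpha\}_{x,y}$, jointe \`{a} la fonctorialit\'{e} des transferts de $F$, une fois observ\'{e} que le transfert induit sur les fibres ne d\'{e}pend pas du choix entre composante locale raffin\'{e}e et non raffin\'{e}e. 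Pour la compatibilit\'{e}, il suffit d'\'{e}valuer la construction sur le graphe $\Gamma_{p}$ d'un morphisme de sch\'{e}mas $p:X\rightarrow Y$: la propri\'{e}t\'{e} (b) du lemme 2.12 montre que sa d\'{e}composition locale est concentr\'{e}e au point image, $\{\Gamma_{p}\}_{x,y}$ \'{e}tant nul sauf pour $y$ \'{e}gal \`{a} l'image du point ferm\'{e}, o\`{u} il vaut le morphisme induit $\bar{p}$. On retrouve ainsi exactement le morphisme structural $[\mathcal{G}_{Nis}F(p)(s)]_{x}=F\{\{p\}_{x}^{h}\}(s_{p(x)})$, ce qui \'{e}tablit du m\^{e}me coup la commutativit\'{e} du carr\'{e} de foncteurs d'oubli de l'\'{e}nonc\'{e}.

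Il reste \`{a} munir $\mathcal{G}_{Nis}^{tr}$ de sa structure de monade. Comme le foncteur d'oubli $Sh_{Nis}^{tr}(\mathcal{S})\rightarrow Sh_{Nis}(\mathcal{S})$ est fid\`{e}le, il suffit de montrer que les transformations naturelles $\eta$ et $\mu$ de la monade $\mathcal{G}_{Nis}$ rel\`{e}vent en morphismes de faisceaux avec transferts; les axiomes de monade, v\'{e}rifi\'{e}s au niveau des faisceaux sous-jacents, seront alors automatiquement h\'{e}rit\'{e}s. Pour l'unit\'{e} $\eta_{F}$, donn\'{e}e par $[\eta_{F}(s)]_{x}=F\{l_{x}^{h}\}(s)$, la compatibilit\'{e} aux transferts traduit exactement la naturalit\'{e} de la section $\sigma$ par rapport aux correspondances finies, soit la propri\'{e}t\'{e} (a) du lemme 2.12.

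Le point d\'{e}licat sera la compatibilit\'{e} de la multiplication $\mu$ aux transferts. Rappelons que $\mu_{F}$ est la projection sur les composantes associ\'{e}es aux points ferm\'{e}s dans l'identification $(\mathcal{G}_{Nis}\mathcal{G}_{Nis}F)(X)=\prod_{x\in X}\prod_{z\in X_{x}^{h}}F\{(X_{x}^{h})_{z}^{h}\}$. Il faudra analyser le comportement de la d\'{e}composition locale raffin\'{e}e sous une hens\'{e}lisation it\'{e}r\'{e}e et v\'{e}rifier que cette projection commute aux sommes $\sum_{y}F\{\{\alpha\}_{x,y}\}$ d\'{e}finissant les transferts. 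C'est ici qu'intervient de mani\`{e}re essentielle l'unicit\'{e} des morphismes $\{\sigma\}^{h}$ de la proposition 2.7 et leur compatibilit\'{e} aux limites projectives, qui assurent que localiser en $x$ puis au point ferm\'{e} de $X_{x}^{h}$ revient \`{a} localiser directement. La compatibilit\'{e} \'{e}ventuelle au produit tensoriel se traiterait de fa\c{c}on analogue via la proposition 2.17(ii).
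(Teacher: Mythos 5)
Le texte ne donne lui-m\^{e}me aucune d\'{e}monstration de cette proposition : il renvoie \`{a} Ivorra [15 bis], et toute la section sur la localisation des correspondances finies (lemme 2.12, propositions 2.7 et 2.17) n'est rappel\'{e}e que pour rendre ce renvoi utilisable. Votre construction est pr\'{e}cis\'{e}ment celle de cette r\'{e}f\'{e}rence : m\^{e}mes sections sous-jacentes $\prod_{x\in X}F\{X_{x}^{h}\}$, transferts d\'{e}finis par $[\mathcal{G}_{Nis}^{tr}F(\alpha)(s)]_{x}=\sum_{y}F\{\{\alpha\}_{x,y}\}(s_{y})$ avec la bonne justification de la finitude de la somme, fonctorialit\'{e} par 2.17(i) (avec la remarque, correcte et n\'{e}cessaire, que composantes raffin\'{e}es et non raffin\'{e}es induisent le m\^{e}me morphisme sur les fibres), commutativit\'{e} du carr\'{e} d'oubli en \'{e}valuant sur les graphes via le point (b) du lemme 2.12, unit\'{e} $\eta$ via le point (a) et la d\'{e}composition locale elle-m\^{e}me, et l'observation que la fid\'{e}lit\'{e} du foncteur d'oubli rend les axiomes de monade automatiques une fois les morphismes structuraux relev\'{e}s. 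Tout cela est correct et constitue le d\'{e}but de la preuve que le texte omet.

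Mais votre preuve s'arr\^{e}te au moment o\`{u} elle devrait r\'{e}ellement commencer : la compatibilit\'{e} de la multiplication $\mu$ aux transferts, que vous identifiez vous-m\^{e}me comme le point d\'{e}licat, n'est pas d\'{e}montr\'{e}e. Votre dernier paragraphe est r\'{e}dig\'{e} au futur (il faudra analyser, v\'{e}rifier) et ne contient aucun argument. Or c'est l\`{a} l'essentiel : il faut montrer que pour $\alpha\in Cor(X,Y)$ la projection sur la composante du point ferm\'{e} entrelace les transferts de $\mathcal{G}_{Nis}^{tr}\mathcal{G}_{Nis}^{tr}F$ et ceux de $\mathcal{G}_{Nis}^{tr}F$, ce qui exige de d\'{e}crire la d\'{e}composition locale raffin\'{e}e d'une correspondance $\{\alpha\}_{x,y}$ dont la source et le but sont les pro-sch\'{e}mas $X_{x}^{h}$ et $Y_{y}^{h}$ (et non des sch\'{e}mas), puis d'\'{e}tablir une transitivit\'{e} de la localisation compatible avec l'identification $(X_{x}^{h})_{\bar{x}}^{h}=X_{x}^{h}$ au point ferm\'{e} $\bar{x}$. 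C'est exactement pour cela que la proposition 2.7 est \'{e}nonc\'{e}e pour des syst\`{e}mes projectifs ; mais l'invoquer ne remplace pas le calcul, qui est la partie r\'{e}ellement technique de [15 bis]. En l'\'{e}tat, votre texte est un plan de preuve correct dont l'\'{e}tape centrale manque.
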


Cette proposition assure que le complexe $G_{Nis}^{\bullet}F$ est canoniquement muni de transferts. Il ne nous reste plus qu'\`{a} voir que la r\'{e}solution de Godement est compatible \`{a} la structure tensorielle.

\begin{definition}
On appelle faisceau Nisnevich quasi-monoïdal sym\'{e}trique un faisceau Nisnevich F avec pour tout sch\'{e}ma X,Y un morphisme associatif sym\'{e}trique:\\
$\oslash_{X,Y}^{F}:F(X)\otimes F(Y)\rightarrow F(X\times Y)$ fonctoriel.\\
On note $Sh_{Nis,\otimes}(\mathcal{S})$ la cat\'{e}gorie de tels faisceaux. On a les mêmes d\'{e}finitions pour les faisceaux  Nisnevich avec transferts.

La monade $\mathcal{G}_{Nis}$ induit une monade sur $Sh_{Nis,\otimes}(\mathcal{S})$. 
\end{definition}
$\\$En effet, on a :$\\$
$\oslash_{X,Y}^{\mathcal{G}_{Nis}F}:\mathcal{G}_{Nis}F(X)\otimes \mathcal{G}_{Nis}F(Y)\rightarrow \mathcal{G}_{Nis}F(X\times Y)$\\
donn\'{e}s par les relations $[\oslash_{X,Y}^{F}(s\otimes t)]=F\{\{m \}_{X,Y,e}^{h}\}[\oslash_{X_{x}^{h},Y^{h}_{y}}^{F}(s_{x}\otimes t_{y})]$
 avec $s\in\mathcal{G}_{Nis}F(X)$, $ t\in\mathcal{G}_{Nis}F(Y)$ et e un point du produit de $X\times Y$ de projection x et y.
 
 On a alors la proposition  analogue:
 \begin{proposition}([15bis] Prop 3.11)\\
  $\mathcal{G}_{Nis}^{tr}$ induit une monade sur la cat\'{e}gorie $Sh_{Nis,\otimes}^{tr}(\mathcal{S})$ rendant le diagramme suivant commutatif:\\
$\xymatrix{Sh_{Nis,\otimes}^{tr}(\mathcal{S})\ar[d]^{\mathcal{G}_{Nis}^{tr}}\ar[r]&Sh_{Nis}(\mathcal{S})\ar[d]^{\mathcal{G}_{Nis}}\\Sh_{Nis,\otimes}^{tr}(\mathcal{S})\ar[r]&Sh_{Nis}(\mathcal{S})}$

 \end{proposition}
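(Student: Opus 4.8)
Le plan est d'\'etablir que la structure quasi-mono\"idale $\oslash^{\mathcal{G}_{Nis}F}$ d\'ej\`a construite sur $\mathcal{G}_{Nis}F$ est compatible aux transferts donn\'es par la proposition ([15bis] Prop 3.8), autrement dit que les morphismes $\oslash^{\mathcal{G}_{Nis}F}_{X,Y}$ sont des morphismes de faisceaux avec transferts ; c'est exactement ce qui fait de $\mathcal{G}_{Nis}^{tr}F$ un objet de $Sh_{Nis,\otimes}^{tr}(\mathcal{S})$ d\`es que $F$ en est un. Une fois ce point acquis, tout le reste est formel : l'associativit\'e et la sym\'etrie de $\oslash^{\mathcal{G}_{Nis}F}$, ainsi que la compatibilit\'e de $\mu$ et $\eta$ \`a $\oslash^{\mathcal{G}_{Nis}F}$, sont des \'egalit\'es entre morphismes de faisceaux avec transferts ; le foncteur d'oubli $Sh_{Nis}^{tr}(\mathcal{S})\rightarrow Sh_{Nis}(\mathcal{S})$ \'etant fid\`ele, il suffit de les v\'erifier apr\`es oubli des transferts, o\`u elles r\'esultent de ce que $\mathcal{G}_{Nis}$ induit d\'ej\`a une monade sur $Sh_{Nis,\otimes}(\mathcal{S})$. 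La commutativit\'e du carr\'e de foncteurs de l'\'enonc\'e est alors imm\'ediate par construction.

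Concr\`etement, pour deux correspondances finies $\alpha\in Cor(X,X')$ et $\beta\in Cor(Y,Y')$, il s'agit de montrer que le carr\'e
\begin{center}
\xymatrix{\mathcal{G}_{Nis}F(X')\otimes\mathcal{G}_{Nis}F(Y')\ar[d]\ar[r]^{\oslash_{X',Y'}}&\mathcal{G}_{Nis}F(X'\times Y')\ar[d]\\\mathcal{G}_{Nis}F(X)\otimes\mathcal{G}_{Nis}F(Y)\ar[r]^{\oslash_{X,Y}}&\mathcal{G}_{Nis}F(X\times Y)}
\end{center}
commute, la fl\`eche verticale de gauche \'etant $\mathcal{G}_{Nis}^{tr}F(\alpha)\otimes\mathcal{G}_{Nis}^{tr}F(\beta)$ et celle de droite $\mathcal{G}_{Nis}^{tr}F(\alpha\otimes\beta)$. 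La famille des foncteurs fibres \'etant conservative (proposition ci-dessus), il suffit de comparer, pour des sections $s\in\mathcal{G}_{Nis}F(X')$ et $t\in\mathcal{G}_{Nis}F(Y')$, la composante en un point $e$ de $X\times Y$ d'image $(x,y)$ des deux membres \'evalu\'es sur $s\otimes t$.

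Je d\'eroulerais les deux membres \`a l'aide de trois ingr\'edients. D'abord la description du transfert sur $\mathcal{G}_{Nis}^{tr}F$ par la d\'ecomposition locale raffin\'ee, soit $[\mathcal{G}_{Nis}^{tr}F(\alpha)(s)]_{x}=\sum_{x'}F\{\{\alpha\}_{x,x'}\}(s_{x'})$ et de m\^eme pour $\beta$. Ensuite la d\'efinition de $\oslash^{\mathcal{G}_{Nis}F}$ par le morphisme de comparaison $m_{X,Y,e}:(X\times Y)^{h}_{e}\rightarrow X^{h}_{x}\times Y^{h}_{y}$, qui ram\`ene la composante en $e$ \`a $F\{\{m\}^{h}_{X,Y,e}\}$ appliqu\'e \`a $\oslash^{F}_{X^{h}_{x},Y^{h}_{y}}$. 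Enfin l'hypoth\`ese que $F$ appartient \`a $Sh_{Nis,\otimes}^{tr}(\mathcal{S})$, qui garantit que $\oslash^{F}$ sur les sch\'emas locaux hens\'eliens est fonctoriel vis-\`a-vis des correspondances finies $\{\alpha\}_{x,x'}$ et $\{\beta\}_{y,y'}$. Par bilin\'earit\'e, on est ramen\'e \`a comparer, dans $Cor((X\times Y)^{h}_{e},X^{'h}_{x'}\times Y^{'h}_{y'})$, la correspondance $(\{\alpha\}_{x,x'}\otimes\{\beta\}_{y,y'})\circ[m_{X,Y,e}]$ issue du membre de gauche \`a la contribution du membre de droite.

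C'est ici qu'intervient de fa\c con d\'ecisive la proposition 2.17 (ii), dont l'\'egalit\'e
\begin{center}
$(\{\alpha\}_{x,x'}\otimes\{\beta\}_{y,y'})\circ[m_{X,Y,e}]=\sum_{e'}[m_{X',Y',e}]\circ\{\alpha\otimes\beta\}_{e,e'}$
\end{center}
identifie exactement le produit tensoriel des d\'ecompositions locales avec la d\'ecomposition locale de $\alpha\otimes\beta$, la somme portant sur les points $e'$ de $X'\times Y'$ au-dessus de $(x',y')$. En reportant cette identit\'e et en resommant sur $x'$ et $y'$ puis sur $e'$, les deux composantes co\"incident. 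L'obstacle principal est donc essentiellement comptable : faire correspondre la double somme sur $(x',y')$ provenant de l'application s\'epar\'ee des transferts aux deux facteurs avec la somme sur les points $e'$ du produit $X'\times Y'$, et v\'erifier que les morphismes de comparaison $m$ s'entrelacent correctement de part et d'autre. Une fois la proposition 2.17 (ii) ins\'er\'ee au bon endroit, la v\'erification restante, tout comme les axiomes de compatibilit\'e de $\mu$ et $\eta$, ne fait plus que reproduire formellement les cas d\'ej\`a trait\'es sans transferts.
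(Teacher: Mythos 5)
Your argument is correct, but note that the paper itself gives no proof of this proposition at all: both this statement and the preceding one (Prop 3.8) are quoted from Ivorra [15bis], and the opening lines of Sections 4 and 5 explicitly defer all such verifications to that reference. What you have done is reconstruct the omitted argument, and you have done it with exactly the tools the paper assembles in Section 4 for this purpose: the refined local decomposition $\alpha\circ\{l\}^{h}_{X,x}=\sum_{y}[l^{h}_{Y,y}]\circ\{\alpha\}_{x,y}$, which yields the componentwise formula for the transfer on $\mathcal{G}_{Nis}^{tr}F$, and Prop 2.17(ii), which is precisely the identity needed to match the double sum over $(x',y')$ coming from applying the two transfers separately with the single sum over the points $e'$ of $X'\times Y'$ lying over $(x',y')$. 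Your reduction of the remaining monad axioms (associativity and symmetry of $\oslash$, compatibility of $\mu$ and $\eta$) to the transfer-free case via faithfulness of the forgetful functor $Sh_{Nis}^{tr}(\mathcal{S})\rightarrow Sh_{Nis}(\mathcal{S})$ is also the right formal shortcut, since the paper has already recorded that $\mathcal{G}_{Nis}$ is a monad on $Sh_{Nis,\otimes}(\mathcal{S})$.

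Two points deserve to be made explicit if this is written out in full. First, the sums $\sum_{x'}F\{\{\alpha\}_{x,x'}\}(s_{x'})$ must be finite; this holds because only finitely many refined components $\{\alpha\}_{x,x'}$ are nonzero, which is part of the content of the localization results and should be invoked, not assumed silently. Second, the paper's statement of Prop 2.17(ii) involves the unrefined classes $[m_{X,Y,e}]$ and (with a typo) $[m_{X',Y',e'}]$, whereas the definition of $\oslash^{\mathcal{G}_{Nis}F}$ uses the refined classes $\{m\}^{h}_{X,Y,e}$; the compatibility between the two, under the comparison map from refined to unrefined correspondences, is routine but is a genuine step in the bookkeeping you describe.
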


\section{Construction du foncteur de r\'{e}alisation}
\subsection{Complexe de de Rham logarithmique}$\\$

On rappelle la d\'{e}finition du complexe de de Rham logarithmique suivant [7]. Soit k un corps de caract\'{e}ristique 0. 
 D  est appel\'{e} diviseur \`{a} croisements normaux, si le morphisme d'inclusion est localement isomorphe pour la topologie \'{e}tale \`{a} la r\'{e}union d'hyperplans de coordonn\'{e}es dans $\mathbb{A}^{n}$  . 
 Soit donc D un tel diviseur dans $\bar{X}$ un sch\'{e}ma lisse s\'{e}par\'{e} sur k. 
On pose X=$\bar{X}$-D et j l'inclusion de X dans $\bar{X}$.\\

On d\'{e}signe par $\Omega^{1}_{\bar{X}}(log D)$ le sous $\mathcal{O}-module$ de $j_{*}\Omega^{1}_{X}$ engendr\'{e} par $\Omega^{1}_{\bar{X}}$ et par les $\frac{dz_{i}}{z_{i}}$ pour $z_{i}$ une \'{e}quation locale d'une composante irreductible locale de Y.\\
Par d\'{e}finition, le faisceau des p-formes diff\'{e}rentielles logarithmiques sur $\bar{X}$ le long de D est $\Lambda^{p}\Omega^{1}_{\bar{X}}(Y)$.
On note  $\Omega^{\bullet}_{\bar{X}}(log D)$ le complexe de de Rham logarithmique de $\bar{X}$ le long de D.

Le complexe de de Rham logarithmique est contravariant par rapport au couple (X,$\bar{X}$), i-e \`{a} un diagramme de la forme:
\xymatrix{X\ar[r]\ar[d]&\bar{X}\ar[d]\\Y\ar[r]&\bar{Y}}\\

On d\'{e}signe par $W_{n}(\Omega^{\bullet}_{\bar{X}}(log D))$ le sous module de $\Omega^{\bullet}_{\bar{X}}(log D)$ engendr\'{e} par les combinaisons lin\'{e}aires de formes diff\'{e}rentielles de la forme:\\

$$\alpha\wedge\frac{dz_{i(1)}}{z_{i(1)}}\wedge\frac{dz_{i(2)}}{z_{i(2)}}\wedge .. \wedge\frac{dz_{i(m)}}{z_{i(m)}}  (m\leq{n})$$
 \\
avec $z_{i(j)}$ \'{e}quations locales de composantes locales disctinctes $Y_{j}$ de Y  et $\alpha$ sans pôles. $\\$

Ce complexe est bifiltr\'{e}:
\begin{enumerate}
	\item

par la filtration de Hodge F telle que $F^{p}( \Omega^{\bullet}_{\bar{X}}(log D))=\sigma_{\geq{p}}\Omega^{\bullet}_{\bar{X}}(log D)$ o\`{u} $\sigma_{\geq{p}}$ est le tronqu\'{e} b\^{e}te d\'{e}fini par:
$\sigma_{\geq{p}}(K^{\bullet})^{n}=
\begin{cases}$
 0 \text{ si } $n<p$ $\\
   K^{n} \text{ si } n\geq p
\end{cases}$.
\item 
par la filtration croissante par le poids W, donn\'{e}e par les sous-complexes $W_{n}(\Omega^{\bullet}_{\bar{X}}(log D))$.
\end{enumerate}
\begin{proposition}$\\$
(i) $\mathbb{H}(\bar{X},\Omega^{\bullet}_{\bar{X}}(log D))$ coïncide avec $H(X,\mathbb{C})$.$\\$
(ii) On a un  isomorphisme canonique $H_{DR}(X\times \mathbb{A}^{1}/k)\rightarrow H_{DR}(X/k)$ o\`{u} le morphisme est donn\'{e} par le pullback des formes diff\'{e}rentielles $pr_{1}^{*}: pr_{1}^{-1}\Omega^{\bullet}_{X}\rightarrow  \Omega^{\bullet}_{X\times \mathbb{ A}^{1}}$.\\
(iii) La cohomologie de de Rham v\'{e}rifie Mayer-Vietoris.
\end{proposition}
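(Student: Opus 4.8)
The plan is to prove the three assertions in turn, the first resting on Deligne's computation of the cohomology of the open part by means of logarithmic forms, the other two being essentially formal.

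For (i) I would follow [7]. Let $j : X \hookrightarrow \bar{X}$ denote the open immersion. On the associated analytic space the holomorphic Poincar\'e lemma furnishes a quasi-isomorphism $\mathbb{C}_X \simeq \Omega^{\bullet}_X$, so that $Rj_*\mathbb{C}_X \simeq Rj_*\Omega^{\bullet}_X$ and $\mathbb{H}^{\bullet}(\bar{X}, Rj_*\Omega^{\bullet}_X) = H^{\bullet}(X,\mathbb{C})$. The heart of the matter is the logarithmic comparison: the inclusion $\Omega^{\bullet}_{\bar{X}}(\log D) \hookrightarrow j_*\Omega^{\bullet}_X$ induces a quasi-isomorphism onto $Rj_*\Omega^{\bullet}_X$. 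I would establish this locally, reducing by the normal crossings hypothesis to a polydisc in which $D$ is a union of coordinate hyperplanes, and then proceed by induction on the number of branches of $D$ using the residue short exact sequences that relate the logarithmic complex on $\bar{X}$ to the logarithmic complexes on the components of $D$. Passing to hypercohomology and invoking Grothendieck's comparison between algebraic and analytic de Rham cohomology (together with GAGA for the coherent cohomology of the proper scheme $\bar{X}$) then identifies $\mathbb{H}(\bar{X}, \Omega^{\bullet}_{\bar{X}}(\log D))$ with $H(X,\mathbb{C})$. This residue and induction step is where the real work lies and is the main obstacle.

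For (ii) I would first record the basic computation $H^0_{DR}(\mathbb{A}^1/k) = k$ and $H^1_{DR}(\mathbb{A}^1/k) = 0$: the differential $d : k[t] \to k[t]\,dt$ sends $t^n$ to $n t^{n-1}dt$, which in characteristic zero is surjective with kernel the constants. The morphism $pr_1^*$ is split by the restriction $s_0^*$ along the zero section $s_0 : X \to X\times\mathbb{A}^1$, and trivially $s_0^* \circ pr_1^* = \mathrm{id}$. To see that $pr_1^*$ is itself an isomorphism I would write every form on $X\times\mathbb{A}^1$ uniquely as $\alpha + \beta\wedge dt$ with $\alpha,\beta$ free of $dt$, and introduce the operator $h(\alpha+\beta\wedge dt) = \int_0^t \beta$, the formal antiderivative in the $\mathbb{A}^1$-variable (again available only in characteristic zero). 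A direct check gives the homotopy formula $dh + hd = \mathrm{id} - pr_1^*\,s_0^*$, whence $pr_1^*\,s_0^* = \mathrm{id}$ on cohomology; combined with $s_0^*\,pr_1^* = \mathrm{id}$ this yields the asserted isomorphism. Alternatively it follows at once from the K\"unneth formula and the computation of $H^{\bullet}_{DR}(\mathbb{A}^1/k)$.

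For (iii) I would use that $H_{DR}(X/k) = \mathbb{H}(X,\Omega^{\bullet}_{X/k})$ and that the hypercohomology of a complex of abelian sheaves turns an open cover into a long exact Mayer--Vietoris sequence. Concretely, for $X = U\cup V$ with open immersions $j_U,j_V,j_{U\cap V}$ there is a distinguished triangle $\Omega^{\bullet}_X \to Rj_{U*}\Omega^{\bullet}_U \oplus Rj_{V*}\Omega^{\bullet}_V \to Rj_{(U\cap V)*}\Omega^{\bullet}_{U\cap V} \xrightarrow{+1}{}$, verified on stalks. Applying $R\Gamma(X,-)$ together with the identity $R\Gamma(X, Rj_{U*}\Omega^{\bullet}_U) = R\Gamma(U,\Omega^{\bullet}_U)$ produces the triangle relating $R\Gamma_{DR}(X)$, $R\Gamma_{DR}(U)\oplus R\Gamma_{DR}(V)$ and $R\Gamma_{DR}(U\cap V)$, and its associated long exact sequence is precisely the Mayer--Vietoris sequence.
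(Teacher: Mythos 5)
Your overall strategy coincides with the paper's: for (i) the paper simply cites Deligne ([6], Lemme 3.14), so your sketch of the holomorphic Poincar\'{e} lemma, the residue/induction comparison $\Omega^{\bullet}_{\bar{X}}(\log D)\simeq Rj_{*}\Omega^{\bullet}_{X}$, and the GAGA/Grothendieck comparison is a faithful expansion of that citation; for (iii) the paper says only that it ``r\'{e}sulte des propri\'{e}t\'{e}s d'hypercohomologie'', and your stalkwise verification of the triangle $\Omega^{\bullet}_{X}\rightarrow Rj_{U*}\Omega^{\bullet}_{U}\oplus Rj_{V*}\Omega^{\bullet}_{V}\rightarrow Rj_{(U\cap V)*}\Omega^{\bullet}_{U\cap V}$ is a correct way of carrying that out; and for (ii) both you and the paper rely on the same key input, formal integration in the $\mathbb{A}^{1}$-variable, available because $\mathrm{car}\,k=0$. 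Your packaging of (ii) as an explicit chain homotopy $dh+hd=\mathrm{id}-pr_{1}^{*}s_{0}^{*}$ is cleaner than the paper's direct check of surjectivity and injectivity on cohomology classes.

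There is, however, one step in (ii) that you skip and that the paper does not: the identification $H_{DR}(X\times\mathbb{A}^{1}/k)\cong\mathbb{H}(X,\,pr_{1*}\Omega^{\bullet}_{X\times\mathbb{A}^{1}})$. Your operator $h=\int_{0}^{t}$ is not a morphism of sheaves on $X\times\mathbb{A}^{1}$ (integration from $0$ to $t$ is not local in the $\mathbb{A}^{1}$-direction), so the homotopy formula only makes sense after pushing forward to $X$, or on global sections over an affine base. As such it proves that $pr_{1*}\Omega^{\bullet}_{X\times\mathbb{A}^{1}}$ is homotopy equivalent to $\Omega^{\bullet}_{X}$ as complexes of sheaves on $X$; to deduce anything about $H_{DR}(X\times\mathbb{A}^{1}/k)$, which is hypercohomology computed upstairs, you must in addition know that the higher direct images vanish, i.e.\ use that $pr_{1}$ is an affine morphism. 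This is precisely the paper's opening move (the Cech--de Rham double complex attached to an affine cover of the form $p^{-1}(U)$). The omission is standard and easily repaired, but as written your homotopy formula lives on the wrong space and the stated conclusion does not yet follow from it.
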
 
\begin{proof}
(i) cf lemme 3.14 [6].$\\$
(iii) r\'{e}sulte des propri\'{e}t\'{e}s d'hypercohomologie.$\\$
(ii) Soit $p=pr_{1}:X\times \mathbb{A}^{1}_{k}\rightarrow X$. Montrons que 
$pr^{*}: \Omega^{\bullet}_{X}\rightarrow  pr_{*}\Omega^{\bullet}_{X\times A^{1}_{k}}$ est un quasi-isomorphisme. En effet, comme l'application p est affine, on peut calculer $H_{DR}^{p}~(X\times \mathbb{A}^{1}/k)$, comme la cohomologie du complexe simple associ\'{e} au complexe double de de Rham-Cech de $\Omega^{\bullet}_{X\times \mathbb{A}^{1}}$ associ\'{e} au recouvrement affine de $p^{-1}(U)$ de $X\times \mathbb{A}^{1}_{k}$ o\`{u} U est un recouvrement affine de X.$\\$ Or ce complexe simple n'est rien d'autre que le complexe  simple associ\'{e} au complexe double de de Rham-Cech de $p_{*}\Omega^{\bullet}_{X\times \mathbb{A}^{1}}$, et donc associ\'{e} au recouvrement affine de U de X, et donc c'est aussi l'hypercohomologie du complexe $p_{*}\Omega^{\bullet}_{X\times \mathbb{A}^{1}}$ sur X. Si on a montr\'{e} que ces complexes sont quasi-isomorphes via $p^{*}$ \`{a} $\Omega^{\bullet}_{X}$, on aura alors que c'est un isomorphisme en cohomologie de de Rham. $\\$
Soit $\alpha \in p_{*}\Omega^{l}_{X\times \mathbb{A}^{1}}$. Alors $\alpha$ s'\'{e}crit sous la forme:
\begin{center}
$\alpha=\alpha'+dt\wedge \beta$
\end{center}
o\`{u}
$\alpha'$ et $\beta$ sont des sections de $p_{*}^{*}\Omega^{l}_{X\times \mathbb{A}^{1}}$, $p_{*}^{*}\Omega^{l-1}_{X\times \mathbb{A}^{1}}$ respectivement.\\ $\beta$ s'\'{e}crit localement:
$\beta=\sum {t^{i}\beta_{i},i<k+1}$ o\`{u} $\beta_{i}$ sont des sections de $\Omega^{l-1}_{X}$. Comme $ \text{car k}=0$, on a $t^{i}=\frac{1}{i+1}dt^{i+1}$, et donc:\\
\begin{center}
$t^{i}dt\wedge \beta_{i}=d(\frac{1}{i+1}t^{i+1}\beta_{i})-\frac{1}{i+1}t^{i+1}d\beta_{i}$.
\end{center}

Il en r\'{e}sulte que toute section $\alpha \in p_{*}\Omega^{l}_{X\times \mathbb{A}^{1}}$ est modulo une forme exacte, dans $p_{*}^{*}\Omega^{l}_{X\times \mathbb{A}^{1}}$.
 Soit maintenant $\alpha=\sum t^{i}\alpha_{i}\in p_{*}^{*}\Omega^{l}_{X}$
 telle que $d\alpha=0$.\\
\begin{center}
$\sum t^{i}d\alpha_{i}+\sum it^{i-1}dt\wedge\alpha_{i}=0$
\end{center}
 Comme $\text{car k}= 0$, on a que $\alpha_{i}=0$ pour $i>0$ et $\alpha \in \Omega^{l}_{X}$.
Ceci montre la surjectivit\'{e} en cohomologie.
L'injectivit\'{e} proc\`{e}de de la m\^{e}me mani\`{e}re.

\end{proof}

Le cas qui nous int\'{e}resse ici est si on prend X sch\'{e}ma lisse s\'{e}par\'{e} sur k. On a alors par Hironaka [1], qu'il existe $\bar{X}$ une compactification projective lisse telle que $\bar{X}$-X est un diviseur \`{a} croisements normaux.

\subsection{Construction de la r\'{e}alisation}$\\$
Suivant Deligne-Goncharov [10], la construction se d\'{e}compose selon plusieurs \'{e}tapes.
On consid\`{e}re $X^{\bullet}$ un complexe born\'{e} d'objets de SmCor(k).\\
 On consid\`{e}re une compactification lisse de chacun des $X^{n}$  et on montre que le complexe $X^{\bullet}$ se prolonge au  complexe $\bar{X}^{\bullet}$. \\
 Ensuite, on prend pour chaque $\bar{X}^{n}$ le complexe  de de Rham logarithmique, et on va consid\'{e}rer un complexe $K^{\bullet}$ qui repr\'{e}sente $R\Gamma(\bar{X}^{n},\Omega^{\bullet}_{\bar{X}}(log D^{n}))$ qui est lui-m\^{e}me  bifiltr\'{e}; pour cela  la r\'{e}solution de Godement fera l'affaire. On en d\'{e}duira un complexe double bifiltr\'{e} qui nous fournira la r\'{e}alisation voulue \`{a} quelques d\'{e}tails pr\`{e}s. Enfin, il s'agit de passer \`{a} la cat\'{e}gorie des motifs mixtes et d'ajouter la structure tensorielle.
 \begin{theorem} On a un foncteur de r\'{e}alisation
 $R_{DR}:\left\{\xymatrix{SmCor(k)^{op}\ar[r]&D^{+}(k_{bifilt})\\X\ar[r]&Dec R\Gamma(\bar{X},\Omega^{\bullet}_{\bar{X}}(log D))}\right\}$
 qui s'\'{e}tend en un foncteur  triangul\'{e} tensoriel sur $DM_{gm}(k)$.
\end{theorem}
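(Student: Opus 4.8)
Le plan est de construire le foncteur d'abord sur $SmCor(k)^{op}$, puis de l'étendre formellement à $K^b(SmCor(k))$, de le faire descendre à $DM_{gm}^{eff}(k)$ et enfin d'inverser le motif de Tate. L'étape décisive, et de loin la plus délicate, est de munir le complexe de de Rham logarithmique de transferts: les faisceaux $\Omega^p_{\bar{X}}(\log D)$ ne sont pas des faisceaux avec transferts, mais pour tout revêtement fini et plat $\pi:X'\to X$ on dispose d'une application trace $\mathrm{Tr}:\pi_*\Omega^\bullet_{X'}\to\Omega^\bullet_X$. Suivant Deligne-Goncharov, j'exploiterais ces traces à travers la résolution de Godement. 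Concrètement, pour $X$ lisse de compactification lisse $\bar{X}$ dont le complémentaire $D=\bar{X}-X$ est un diviseur à croisements normaux, je poserais $R_{DR}(X)=Dec\,R\Gamma(\bar{X},\Omega^\bullet_{\bar{X}}(\log D))$, calculé par la résolution de Godement $\mathcal{G}^\bullet\Omega^\bullet_{\bar{X}}(\log D)$ (Proposition de la section 4). Pour définir l'action d'une correspondance $\alpha\in Cor(X,Y)$, j'utiliserais la décomposition locale raffinée $\alpha\circ\{l\}^h_{X,x}=\sum_y[l^h_{Y,y}]\circ\{\alpha\}_{x,y}$ (Prop 2.17): chaque composante $\{\alpha\}_{x,y}\in Cor\{X^h_x,Y^h_y\}$ définit, par tiré en arrière puis trace des formes différentielles, un morphisme sur les fibres Nisnevich $\Omega^\bullet\{Y^h_y\}\to\Omega^\bullet\{X^h_x\}$, ces morphismes se recollant, via la monade à transferts $\mathcal{G}_{Nis}^{tr}$ ([15bis] Prop 3.8), en un morphisme de complexes bifiltrés $R_{DR}(\alpha):R_{DR}(Y)\to R_{DR}(X)$. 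La compatibilité à la composition se ramène alors à l'identité locale $\{\beta\circ\alpha\}_{x,z}=\sum_y\beta_{y,z}\circ\{\alpha\}_{x,y}$ (Prop 2.17 (i)).

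Il faut ensuite contrôler la structure bifiltrée et l'indépendance des choix. La filtration de Hodge $F$ (troncation bête) et la filtration par le poids $W$ font de chaque $\Omega^\bullet_{\bar{X}}(\log D)$ un complexe bifiltré; le décalage $Dec$ transforme $W$ en la filtration par le poids de Deligne, qui est la bonne filtration au niveau de la cohomologie. Je vérifierais que $Dec\,R\Gamma(\bar{X},\Omega^\bullet_{\bar{X}}(\log D))$, vu dans $D^+(k_{bifilt})$, ne dépend pas de la compactification $\bar{X}$: deux compactifications lisses étant dominées par une troisième grâce à Hironaka, la fonctorialité du complexe de de Rham logarithmique pour le couple $(X,\bar{X})$ et la stricte compatibilité aux filtrations entraînent que le résultat est canonique dans la catégorie dérivée bifiltrée.

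Le foncteur s'étend alors aux complexes bornés: pour $X^\bullet$ dans $K^b(SmCor(k))$, les différentielles, qui sont des correspondances finies, induisent via les transferts les différentielles horizontales d'un complexe double bifiltré dont la direction verticale est celle de Godement, et on en prend le complexe total. Pour descendre à $DM_{gm}^{eff}(k)$, j'invoquerais la Proposition de la section 5.1: la cohomologie de de Rham vérifie l'invariance homotopique $H_{DR}(X\times\mathbb{A}^1)\cong H_{DR}(X)$ et la suite de Mayer-Vietoris. Les complexes engendrant la sous-catégorie épaisse $\bar{T}$ sont donc envoyés sur des objets acycliques, et la propriété universelle de la localisation (Théorème de Verdier) fournit la factorisation à travers $DM_{gm}^{eff}(k)$; le foncteur est triangulé car il envoie les cônes sur des cônes. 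La structure tensorielle provient de la formule de Künneth $M_{gm}(X)\otimes M_{gm}(Y)=M_{gm}(X\times Y)$, de l'isomorphisme de Künneth pour les formes logarithmiques sur $\bar{X}\times\bar{Y}$ et de la compatibilité de la résolution de Godement au produit tensoriel ([15bis] Prop 3.11, structure quasi-monoïdale), la compatibilité des transferts découlant de Prop 2.17 (ii).

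Enfin, pour passer à $DM_{gm}(k)$, il suffit d'inverser le motif de Tate. Comme $\mathbb{Z}(1)=\tilde{M}_{gm}(\mathbb{P}^1)[-2]$ et que le calcul de $H^2_{DR}(\mathbb{P}^1/k)=k$, de dimension $1$ en degré de Hodge $1$, montre que $R_{DR}(\mathbb{Z}(1))$ est le $k$-espace bifiltré inversible $k(-1)$, le foncteur s'étend à $DM_{gm}(k)$ par la propriété universelle de la construction par inversion, en respectant la structure tensorielle puisque l'involution de permutation sur $\mathbb{Z}(1)^{\otimes 2}$ y est l'identité. L'obstacle principal reste la première étape: le transport cohérent des applications trace aux fibres Nisnevich et la vérification de la compatibilité à la composition et au produit tensoriel, qui mobilisent tout l'appareil technique des sections 3 et 4, la difficulté profonde venant de ce que les faisceaux $\Omega^p$ ne possèdent pas intrinsèquement de transferts.
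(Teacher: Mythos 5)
Votre plan reproduit correctement l'architecture ext\'{e}rieure de la preuve du texte (construction sur $SmCor(k)^{op}$, ind\'{e}pendance de la compactification par domination \`{a} la Hironaka, descente \`{a} $DM^{eff}_{gm}(k)$ via l'invariance homotopique et Mayer-Vietoris, structure tensorielle par K\"{u}nneth et la compatibilit\'{e} de la r\'{e}solution de Godement au produit, inversion du motif de Tate via $R_{DR}(\mathbb{Z}(1))\cong (k,1)$), mais il comporte deux lacunes r\'{e}elles au coeur de l'argument. Premi\`{e}rement, vous supposez acquise l'action d'une correspondance finie sur les formes diff\'{e}rentielles (\og par tir\'{e} en arri\`{e}re puis trace \fg), en partant de la trace pour un rev\^{e}tement fini et plat. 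Or une correspondance \'{e}l\'{e}mentaire $\Gamma\subset X\times Y$ est finie surjective sur $X$ mais en g\'{e}n\'{e}ral ni plate ni lisse: la trace doit \^{e}tre construite au niveau des corps de fractions ($Tr_{B/A}:\Omega^{\bullet}_{B/k}\otimes_{B}L\rightarrow\Omega^{\bullet}_{A/k}\otimes_{A}K$), et c'est pr\'{e}cis\'{e}ment le contenu des propositions de la section 6 du texte que de montrer (a) que cette trace commute \`{a} la diff\'{e}rentielle, (b) qu'elle envoie les formes r\'{e}guli\`{e}res sur les formes r\'{e}guli\`{e}res lorsque la base est lisse (r\'{e}duction aux points de codimension 1 puis aux traits strictement hens\'{e}liens), (c) qu'elle pr\'{e}serve les p\^{o}les logarithmiques (calcul $\frac{dt}{t}=\frac{dz}{nz}$), et (d) qu'elle est compatible \`{a} la composition des correspondances (r\'{e}duction aux cha\^{i}nes de sous-vari\'{e}t\'{e}s de codimension 1 puis aux morphismes finis de courbes). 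Rien de cela ne d\'{e}coule de la machinerie des sections 4 et 5, qui ne fait que transporter des transferts d\'{e}j\`{a} existants sur $\Omega^{\bullet}(\log)$ vers sa r\'{e}solution de Godement; invoquer $\mathcal{G}^{tr}_{Nis}$ ([15bis] Prop 3.8) pour construire ces transferts est circulaire, puisque cette proposition prend en entr\'{e}e un faisceau avec transferts, ce que $\Omega^{\bullet}(\log)$ n'est pas encore \`{a} ce stade. Vous localisez donc la difficult\'{e} au mauvais endroit.

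Deuxi\`{e}mement, votre d\'{e}finition ponctuelle de $R_{DR}(\alpha)$ utilise la d\'{e}composition locale raffin\'{e}e de $\alpha$ aux points de $X$ et de $Y$, alors que $R_{DR}(Y)$ est calcul\'{e} sur $\bar{Y}$, y compris aux points du diviseur \`{a} l'infini: il faut donc d'abord prolonger $\alpha$ en une correspondance $\bar{\alpha}\in Cor(\bar{X},\bar{Y})$, ce qui est le lemme du texte reposant sur l'interpr\'{e}tation $Cor(X,Y)^{eff}\cong (Sym\, X)(Y)$ et sur Hironaka appliqu\'{e} au morphisme $X\rightarrow Sym^{d}(Y)$; ce prolongement conditionne de plus le choix de la compactification $\bar{X}$. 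Cette \'{e}tape est absente de votre proposition et elle n'est pas formelle. Enfin, point mineur: pour \'{e}tendre le foncteur \`{a} l'enveloppe pseudo-ab\'{e}lienne qui entre dans la d\'{e}finition de $DM^{eff}_{gm}(k)$, la propri\'{e}t\'{e} universelle de Verdier ne suffit pas; il faut aussi savoir que $D^{+}(k_{bifilt})$ est pseudo-ab\'{e}lienne (Balmer-Schlichting [3]), ce que le texte invoque explicitement et que vous omettez.
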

Nous utilisons la topologie Nisnevich (\'{e}tale fonctionne aussi), essentiellement parce que les r\'{e}sultats \'{e}tablis pour la monade de Godement marchent pour ces deux topologies et non pour la topologie de Zariski. L'hypercohomologie du complexe de de Rham est la même. En effet, on rappelle le r\'{e}sultat suivant:
\begin{proposition}
 Soit $\mathcal{F}$ un faisceau quasi-coh\'{e}rent sur $X_{zar}$, il induit un faisceau $\mathcal{F}_{\text{Nis}}$,
 on a alors que $ H^{*}(X,F) \rightarrow H^{*}(X_{\text{Nis}},F_{\text{Nis}})$ est un isomorphisme.
\end{proposition}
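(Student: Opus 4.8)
The plan is to identify the displayed arrow with an edge homomorphism of the Leray spectral sequence of the morphism of sites $\epsilon: X_{\mathrm{Nis}}\to X_{\mathrm{Zar}}$, for which $\mathcal{F}_{\mathrm{Nis}}=\epsilon^{*}\mathcal{F}$. Writing $E_{2}^{p,q}=H^{p}(X_{\mathrm{Zar}},R^{q}\epsilon_{*}\mathcal{F}_{\mathrm{Nis}})\Rightarrow H^{p+q}(X_{\mathrm{Nis}},\mathcal{F}_{\mathrm{Nis}})$, the map in the statement is the composite $H^{p}(X,\mathcal{F})=H^{p}(X_{\mathrm{Zar}},\epsilon_{*}\mathcal{F}_{\mathrm{Nis}})=E_{2}^{p,0}\to H^{p}(X_{\mathrm{Nis}},\mathcal{F}_{\mathrm{Nis}})$. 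It therefore suffices to prove the two facts $\epsilon_{*}\mathcal{F}_{\mathrm{Nis}}=\mathcal{F}$ and $R^{q}\epsilon_{*}\mathcal{F}_{\mathrm{Nis}}=0$ for $q>0$; the vanishing of the higher direct images collapses the spectral sequence onto the row $q=0$, whence the edge map is an isomorphism in every degree.

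The first identity is the elementary half. A quasi-coherent sheaf satisfies faithfully flat descent, and every Nisnevich covering is in particular an fppf covering, so $\mathcal{F}$ is already a sheaf for the Nisnevich topology. Its Nisnevich sheafification (the functor $a_{\mathrm{Nis}}$ of the sheafification proposition) thus coincides with $\mathcal{F}$ on $Sm/k$, and applying $\epsilon_{*}$ gives back $\epsilon_{*}\epsilon^{*}\mathcal{F}=\mathcal{F}$.

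For the higher direct images, the sheaf $R^{q}\epsilon_{*}\mathcal{F}_{\mathrm{Nis}}$ is the Zariski sheaf associated with the presheaf $U\mapsto H^{q}(U_{\mathrm{Nis}},\mathcal{F}_{\mathrm{Nis}})$, so it is enough to show that this presheaf vanishes on a basis of the Zariski topology, namely that $H^{q}(U_{\mathrm{Nis}},\mathcal{F})=0$ for $q>0$ and $U=\mathrm{Spec}\,A$ affine. I would first refine an arbitrary Nisnevich covering of the quasi-compact $U$ by a finite one whose members, hence also all of whose fibre products, are affine; for such a covering $\{\,\mathrm{Spec}\,B_{i}\to\mathrm{Spec}\,A\,\}$ the ring $B=\prod_{i}B_{i}$ is faithfully flat over $A$, and the associated \v{C}ech complex of $\mathcal{F}=\widetilde{M}$ is exactly the Amitsur complex of $M$ along $A\to B$, which is exact in positive degrees by faithfully flat descent. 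Thus the \v{C}ech groups $\check{H}^{q}(U_{\mathrm{Nis}},\mathcal{F})$ vanish for $q>0$, and Cartan's criterion (\v{C}ech cohomology agreeing with derived cohomology once it vanishes on all coverings of a cofinal family) promotes this to $H^{q}(U_{\mathrm{Nis}},\mathcal{F})=0$.

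The delicate point, and the one I expect to cost the most care, is the passage from \v{C}ech to derived cohomology: Cartan's criterion must be applied on the Nisnevich site, which requires checking that the affine coverings with affine fibre products are cofinal and that the \v{C}ech vanishing holds uniformly for all of them, so that the \v{C}ech-to-derived spectral sequence degenerates. Everywhere quasi-coherence is indispensable, being used both for Nisnevich descent in the first identity and for the exactness of the Amitsur complex in the second. As a consistency check one may note that over a henselian local ring every Nisnevich covering is refined by the trivial one (Remark of \S2.2), so higher Nisnevich cohomology of any sheaf vanishes there; this recovers the vanishing at the Nisnevich points and makes the collapse of the spectral sequence transparent, the only subtlety being that the genuine Zariski stalks $\mathcal{O}_{X,x}$ are local but not henselian, which is precisely why the \v{C}ech--Amitsur argument above is carried out over general affines rather than over stalks.
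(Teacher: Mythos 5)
Your proof is correct, but it takes a route the paper does not even sketch: the paper's entire proof is the citation of SGA~4, Exp.~VII, Prop.~4.3, where the comparison is established for the \'{e}tale topology, together with the assertion that the Nisnevich case follows \emph{en particulier}. What you have written is in substance the proof of that cited result --- the Leray spectral sequence for $\epsilon\colon X_{\mathrm{Nis}}\to X_{\mathrm{Zar}}$, faithfully flat descent to identify $\epsilon_{*}\mathcal{F}_{\mathrm{Nis}}$ with $\mathcal{F}$, \v{C}ech--Amitsur vanishing over affines, and Cartan's criterion --- transposed from \'{e}tale to Nisnevich coverings. This transposition buys something genuine: the paper's ``en particulier'' is not purely formal, since the Nisnevich topology sits strictly between the Zariski and \'{e}tale ones; from $H^{*}_{\mathrm{Zar}}\cong H^{*}_{\text{\'{e}t}}$ one only deduces that $H^{*}_{\mathrm{Zar}}\to H^{*}_{\mathrm{Nis}}$ is injective and $H^{*}_{\mathrm{Nis}}\to H^{*}_{\text{\'{e}t}}$ is surjective, and closing the gap requires either rerunning the descent argument with Nisnevich coverings (your route) or applying the \'{e}tale result over henselian local rings to collapse the Leray sequence of $X_{\text{\'{e}t}}\to X_{\mathrm{Nis}}$. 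Your closing remark is also exactly the right caveat: the splitting of Nisnevich coverings of henselian local schemes does not shortcut the vanishing of $R^{q}\epsilon_{*}$, because stalks of Zariski sheaves are taken over the local rings $\mathcal{O}_{X,x}$, which are not henselian; this is precisely where quasi-coherence and the Amitsur complex are irreplaceable. Two details deserve to be made explicit when you write this up: extracting a \emph{finite} affine refinement of a Nisnevich covering while preserving the lifting property uses the Noetherian hypothesis (automatic on $Sm/k$, via a splitting sequence), and the finiteness of the product $B=\prod_{i}B_{i}$ is what guarantees that $A\to B$ is faithfully flat.
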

\begin{proof}
 SGA 4 Exp VII Prop 4.3, o\`{u} c'est d\'{e}montr\'{e} pour la topologie \'{e}tale et donc en particulier pour celle de Nisnevich.
\end{proof}$\\$
Avec cette proposition, l'hypercohomologie du complexe de de Rham s'en d\'{e}duit par la suite spectrale standard.

On commence maintenant avec le lemme suivant qui nous assure que les correspondances finies se prolongent aux compactifications.

\begin{lemma}
Soit $\Gamma: X\rightarrow Y$ dans SmCor(k). Si $\bar{Y}$ est une compactification projective lisse de Y, il existe $ \bar{X}$ une compactification projective lisse  telle que $\Gamma$ se prolonge de mani\`{e}re unique en $\bar{\Gamma}$ de $\bar{X}$ vers $\bar{Y}$.
\end{lemma}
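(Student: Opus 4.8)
Je commencerais par l'unicit\'e, qui est purement formelle et qui dicte d\'ej\`a la construction. Une correspondance finie de $\bar{X}$ vers $\bar{Y}$ est une combinaison $\sum n_i[V_i]$ de sous-sch\'emas ferm\'es int\`egres $V_i\subset\bar{X}\times\bar{Y}$ finis surjectifs sur $\bar{X}$ (suppos\'e connexe). Comme $X$ est dense dans $\bar{X}$ et que $V_i\to\bar{X}$ est surjectif, la trace $V_i|_X:=V_i\cap(X\times\bar{Y})$ est un ouvert dense non vide de $V_i$, de sorte que $V_i$ est l'adh\'erence de $V_i|_X$. L'application de restriction des cycles finis sur $\bar{X}$ vers les cycles finis sur $X$ est donc injective, et deux prolongements finis de $\Gamma$ sont \'egaux. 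Cela montre en outre que le seul candidat possible est $\sum n_i[\bar{W}_i]$, o\`u les $W_i$ sont les composantes de $\Gamma$ et $\bar{W}_i$ leurs adh\'erences dans $\bar{X}\times\bar{Y}$~: toute la difficult\'e est de choisir $\bar{X}$ pour que ces adh\'erences soient finies sur $\bar{X}$.

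Pour l'existence, je me ram\`enerais d'abord au cas d'une correspondance \'el\'ementaire effective $W\subset X\times Y$, finie surjective de degr\'e $d$ sur $X$ connexe. Via l'isomorphisme $\sigma:\,Cor(X,Y)^{eff}\stackrel{\sim}{\rightarrow}(\mathrm{Sym}\,X)(Y)$ rappel\'e plus haut, la correspondance $W$ correspond \`a un morphisme $X\to\mathrm{Sym}^{d}(Y)\subset\mathrm{Sym}^{d}(\bar{Y})$. Le sch\'ema $\mathrm{Sym}^{d}(\bar{Y})$ \'etant projectif, je choisirais par Nagata puis Hironaka (on est en caract\'eristique $0$) une compactification projective lisse initiale $\bar{X}_0$ de $X$, et je disposerais alors d'un morphisme rationnel $\bar{X}_0\dashrightarrow\mathrm{Sym}^{d}(\bar{Y})$ d\'efini sur $X$, dont le lieu d'ind\'etermination est contenu dans le bord $\bar{X}_0\setminus X$. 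Par \'elimination de l'ind\'etermination (Hironaka), il existe une suite d'\'eclatements $\bar{X}\to\bar{X}_0$ de centres dans le bord, avec $\bar{X}$ projectif lisse et isomorphe \`a $\bar{X}_0$ au-dessus de $X$, telle que ce morphisme rationnel se prolonge en un morphisme $\bar{f}:\bar{X}\to\mathrm{Sym}^{d}(\bar{Y})$.

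Je poserais ensuite $\bar{\Gamma}:=\sigma^{-1}(\bar{f})\in Cor(\bar{X},\bar{Y})^{eff}$, qui est par construction de $\sigma$ une correspondance effective finie surjective de degr\'e $d$ sur $\bar{X}$. La naturalit\'e de $\sigma$ vis-\`a-vis de la composition des correspondances (la restriction \`a l'ouvert $X$ \'etant la composition avec le graphe de l'immersion $j$) donne alors $\bar{\Gamma}|_{X}=\sigma^{-1}(\bar{f}\circ j)=[W]=\Gamma$, ce qui fournit le prolongement voulu. Pour une correspondance $\Gamma$ quelconque, j'\'ecrirais $\Gamma=\Gamma^{+}-\Gamma^{-}$ avec $\Gamma^{+},\Gamma^{-}$ effectives, j'\'eliminerais simultan\'ement l'ind\'etermination du morphisme rationnel \`a valeurs dans $\mathrm{Sym}^{d^{+}}(\bar{Y})\times\mathrm{Sym}^{d^{-}}(\bar{Y})$ (toujours propre), et je retrancherais les deux prolongements effectifs obtenus sur une m\^eme $\bar{X}$.

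Une variante purement g\'eom\'etrique consisterait \`a prendre directement l'adh\'erence $\bar{W}_0$ de $W$ dans $\bar{X}_0\times\bar{Y}$ et \`a rendre $pr_1:\bar{W}_0\to\bar{X}_0$ finie par aplatissement par \'eclatement \`a la Raynaud--Gruson~: apr\`es platification, $\bar{W}$ \'etant int\`egre de m\^eme dimension que $\bar{X}$ (car $W\to X$ est fini surjectif), la platitude force les fibres \`a \^etre de dimension nulle, d'o\`u la quasi-finitude, et quasi-fini plus propre donne fini. L'obstacle principal, dans les deux approches, est exactement le contr\^ole de l'adh\'erence au bord~: rien n'assure a priori qu'une compactification donn\'ee rende $\bar{W}$ finie, et c'est la r\'esolution des singularit\'es, sous la forme de l'\'elimination de l'ind\'etermination vers $\mathrm{Sym}^{d}(\bar{Y})$ (ou de l'aplatissement), qui en constitue le c\oe ur technique. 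Le point d\'elicat \`a surveiller est de mener cette modification en pr\'eservant simultan\'ement la lissit\'e, la projectivit\'e et l'isomorphie au-dessus de $X$, afin que $\bar{X}$ soit bien une compactification de $X$ et que $\bar{\Gamma}$ prolonge effectivement $\Gamma$.
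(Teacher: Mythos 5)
Votre preuve est correcte et suit essentiellement la m\^eme strat\'egie que celle du texte : r\'eduction au cas d'une correspondance effective sur $X$ connexe, traduction via l'isomorphisme $\sigma$ en un morphisme $X\rightarrow Sym^{d}(Y)$, prolongement de ce morphisme \`a une compactification $\bar{X}$ par Hironaka, puis r\'ecup\'eration de $\bar{\Gamma}$ (le texte prend l'adh\'erence de $\Gamma$ dans $\bar{X}\times\bar{Y}$, vous prenez $\sigma^{-1}(\bar{f})$, ce qui revient au m\^eme par unicit\'e). Vos ajouts --- l'argument d'unicit\'e par densit\'e, le traitement explicite du cas non effectif via $\Gamma^{+}-\Gamma^{-}$, et la variante par aplatissement de Raynaud--Gruson --- sont des compl\'ements utiles mais ne changent pas la nature de l'argument.
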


\begin{proof}

On a seulement besoin de traiter le cas o\`{u} X est connexe et $\Gamma$ est un sous-sch\'{e}ma ferm\'{e} de $X\times Y$, int\`{e}gre et fini dominant sur X. On note alors d le degr\'{e} de $\Gamma$ sur X. D'apr\`{e}s la proposition 3, on a alors un morphisme  de sch\'{e}mas $\gamma:X \rightarrow Sym^{d}(Y)$.
Par Hironaka [13], il existe $ \bar{X}$ une compactification projective lisse  telle que $\gamma$ se prolonge en $\bar{\gamma}:\bar{X}\rightarrow Sym^{d}(\bar{Y})$. Pour une telle compactification, on prend l'adh\'{e}rence de $\Gamma$ dans $\bar{X}\times \bar{Y}$.
Celle-ci est bien finie dominante sur $ \bar{X}$ et fournit le prolongement cherch\'{e}. L'unicit\'{e} vient du fait  que $ \bar{X}$ est int\`{e}gre et $Sym^{d}(\bar{Y})$ s\'{e}par\'{e}.

\end{proof}

On veut d\'{e}sormais montrer que le complexe de de Rham est canoniquement muni de transferts, puis nous passerons apr\`{e}s au cas logarithmique.$\\$$\\$
\begin{proposition} 

 Soit $\Gamma: X\rightarrow Y$ dans SmCor(k).
 \begin{enumerate}
 \item On a alors un morphisme de complexes de faisceaux sur le support de $\Gamma$:\\
  $[\Gamma]: pr_{2}^{*}\Omega^{\bullet}_{Y}\rightarrow  pr_{1}^{*}\Omega^{\bullet}_{X}$.
  \item Le morphisme d\'{e}fini ci-dessus est fonctoriel sur SmCor(k).

\end{enumerate}
\end{proposition}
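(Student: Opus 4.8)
The plan is to reduce to an elementary correspondence and to build $[\Gamma]$ first on the locus where everything is unramified, then to extend it across the ramification locus, this extension being the real content. By $\mathbb{Z}$-linearity of $Cor(X,Y)$ and additivity of the de Rham complexes in the two projections, it suffices to treat $\Gamma=[W]$ with $W\subseteq X\times Y$ an integral closed subscheme, finite and surjective over a connected component of $X$. Write $\pi=pr_{1}|_{W}\colon W\to X$ (finite surjective) and $p=pr_{2}|_{W}\colon W\to Y$; since $X$ is smooth and $k$ has characteristic $0$, the dominant finite morphism $\pi$ is generically \'etale, so there is a dense open $U\subseteq W$ on which $\pi$ is \'etale.

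On $U$ the pullback of forms $\pi^{*}\Omega^{\bullet}_{X}\to\Omega^{\bullet}_{U}$ is an isomorphism of complexes, and I would set
\[
[\Gamma]|_{U}:=(\pi^{*}|_{U})^{-1}\circ p^{*}\colon p^{*}\Omega^{\bullet}_{Y}\longrightarrow \pi^{*}\Omega^{\bullet}_{X}=pr_{1}^{*}\Omega^{\bullet}_{X}.
\]
This is a morphism of complexes of sheaves on $U$, since both $p^{*}$ and the inverse of the chain isomorphism $\pi^{*}$ commute with the de Rham differential. Because $pr_{1}^{*}\Omega^{p}_{X}$ is locally free, hence torsion free, on the integral scheme $W$, any extension of $[\Gamma]|_{U}$ to all of $W$ is unique; so the only question is existence.

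The crux is the extension across the ramification divisor $R=W\setminus U$, and here the naive formula genuinely misbehaves: writing $\mathcal{O}_{W}$ locally as $\mathcal{O}_{X}[t]/(f)$ with $f$ monic, the relation $f_{t}\,dt+\sum_{j}f_{x_{j}}\,dx_{j}=0$ in $\Omega^{1}_{W}$ shows that $dt$, hence $p^{*}$ of a form, when expressed in the basis $pr_{1}^{*}(dx_{j})$ carries the different $f_{t}=f'(t)$ in the denominator, so $(\pi^{*})^{-1}p^{*}$ acquires poles along $R$. The way to absorb these poles is to pass through the trace of K\"ahler differentials of the finite morphism $\pi$: after replacing $W$ by its normalization $\nu\colon\widetilde{W}\to W$ (finite and birational, $\widetilde{W}$ normal), one works at each codimension-one point of $\widetilde{W}$ (a discrete valuation ring), where the Grothendieck--Tate trace $Tr_{\pi}\colon\pi_{*}\Omega^{\bullet}_{W}\to\Omega^{\bullet}_{X}$ --- which is $\mathcal{O}_{X}$-linear, commutes with $d$, and is the differential analogue of the determinant/norm map $s_{\Gamma}^{*}f^{\otimes n}=\det(f\mid\mathcal{A}_{X})$ used in the identification $\sigma\colon Cor(X,Y)^{eff}\xrightarrow{\sim}(Sym\,X)(Y)$ --- provides the correctly normalized regular replacement for the naive expression. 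The heart of the proof is then to show that this trace-normalized datum does define a morphism of complexes of sheaves on all of $W$ restricting to $[\Gamma]|_{U}$ generically, the complementary directions being controlled by reflexivity (Hartogs) on the normal scheme $\widetilde{W}$ since $\Omega^{p}$ is locally free. I expect the regularity along $R$ to be the main obstacle, and the point where characteristic $0$ and the determinant description of $\sigma(\Gamma)$ are essential.

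For the functoriality in part (2), I would again argue generically. Since a morphism into the torsion-free sheaf $pr_{1}^{*}\Omega^{\bullet}_{X}$ is determined by its restriction to any dense open, it suffices to verify $[\Gamma'\circ\Gamma]=[\Gamma]\circ[\Gamma']$ and $[\mathrm{id}]=\mathrm{id}$ over the common \'etale locus. There $[\mathrm{id}]=\mathrm{id}$ is immediate, and the compatibility with composition is the transitivity of the pullback of forms combined with the composition law for finite correspondences; concretely it is read off from the refined local decomposition $(\beta\circ\alpha)_{x,z}=\sum_{y}\beta_{y,z}\circ\alpha_{x,y}$ of the localization results (Prop 2.17), which at the generic points expresses exactly the chain rule for the \'etale comparison isomorphisms. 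This reduces part (2) to the generic point, where it becomes a formal calculation.
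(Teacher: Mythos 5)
Your overall strategy --- reduce to an elementary correspondence, use generic \'etaleness in characteristic $0$, and invoke a trace on K\"ahler differentials to handle ramification --- is the paper's strategy. But there is a genuine gap at the step you yourself identify as the heart of the proof: you claim the trace-normalized datum defines a morphism of sheaves \emph{on all of $W$} restricting to $[\Gamma]|_{U}=(\pi^{*})^{-1}\circ p^{*}$ generically. No such extension exists, and your own uniqueness argument proves it: since $pr_{1}^{*}\Omega^{\bullet}_{X}$ is torsion free on the integral scheme $W$, the only candidate extension of $(\pi^{*})^{-1}\circ p^{*}$ is the meromorphic one, which has genuine poles along the ramification. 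Concretely, take $X=\mathrm{Spec}\,k[z]$, $Y=\mathrm{Spec}\,k[t]$, and $W\subset X\times Y$ the curve $z=t^{2}$ (integral, finite surjective of degree $2$ over $X$, with $p$ an isomorphism): then $(\pi^{*})^{-1}p^{*}(dt)=\frac{1}{2t}\,\pi^{*}dz$, which does not extend across $t=0$. The trace is \emph{not} an extension of your map on $W$; it is the map of sheaves on $X$
\[
Tr_{\Gamma/X}\circ p^{*}\colon \pi_{*}\,pr_{2}^{*}\Omega^{\bullet}_{Y}\longrightarrow \Omega^{\bullet}_{X},
\]
obtained from your fiberwise comparison by pushing forward along the finite map $\pi$ and summing over conjugate sheets ($Tr_{B/A}=Tr_{L/K}\otimes\mathrm{id}$ under the generic identification), and only this summed map is regular: in the example, $Tr(dt)=Tr_{L/K}(\tfrac{1}{2t})\,dz=0$ because $t^{-1}+(-t)^{-1}=0$. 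This is exactly how the paper defines $[\Gamma]$, and it is why its statement ``sur le support de $\Gamma$'' must be read through $\pi_{*}$; regularity is a property of the Galois-averaged map on $X$, not of any map of sheaves on $W$.

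Two further points carry the real weight of the paper's proof and are only asserted in yours. First, that $Tr_{B/A}$ commutes with $d$ is not a citation-free property of a ``Grothendieck--Tate trace'' on the $\Omega^{\bullet}_{A}$-linear extension the paper uses: it is proved there by a minimal-polynomial computation, $Tr(dz)=\sum Tr\bigl(\frac{z^{n-i}}{f'(z)}\bigr)db_{i}=\frac{n^{*}}{n}db_{1}$, resting on the formula from Serre, Corps Locaux [26, III, \S 6]. Second, the regularity statement ($Tr_{B/A}$ sends $\Omega^{\bullet}_{B/k}$ into $\Omega^{\bullet}_{A/k}$ for $A$ smooth) is where the paper does its reduction --- normalization, restriction to codimension-one points, \'etale base change to a strictly henselian trait, and the explicit Kummer computation $A'=A[t]/(t^{n}-z)$ with $Tr(t^{m}dt)=0$ unless $m\equiv -1\ (n)$ --- and your sketch stops at ``I expect the regularity along $R$ to be the main obstacle.'' Finally, for functoriality your appeal to Prop 2.17 is off target: that proposition concerns the Nisnevich-local decomposition of correspondences and is used later for the Godement-resolution transfers, whereas the composition identity here again involves traces (transitivity of $Tr$ with the multiplicities coming from the cycle-theoretic composition); the paper handles it by reducing to the generic point, inducting along a chain of codimension-one smooth subvarieties, and checking the Kummer case $t^{n}=x$ in degrees $0$ and $1$ --- a reduction your ``formal calculation at the generic point'' does not supply.
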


\begin{proof}
Pour d\'{e}montrer cette assertion, on peut se restreindre au cas o\`{u} X et Y sont affines.

Soit B une A-alg\`{e}bre finie. Soient L et K leurs corps de fractions respectifs. L/K est finie s\'{e}parable comme on est en caract\'{e}ristique nulle. On va montrer que l'application naturelle\\ $i:\Omega^{\bullet}_{A/k}\stackrel{A}{\otimes} L\rightarrow \Omega^{\bullet}_{B/k}\stackrel{B}{\otimes} L $ est bijective.

On remarque tout d'abord que la structure  de complexe sur L sur $\Omega^{\bullet}_{B/k}\stackrel{B}{\otimes} L$  est donn\'{e}e par:\\
$d(\frac{\omega}{a})=\frac{ad\omega-da\wedge\omega}{a^{2}}$.
De m\^{e}me, on met une structure de complexe sur K sur $C:=\Omega^{\bullet}_{A/k}\stackrel{A}{\otimes} K$. On va maintenant mettre un structure de complexe sur 
$C'=C\stackrel{K}{\otimes} L$  de telle façon que i devienne un morphisme de complexes et ensuite construire un inverse \`{a} i.
Premi\`{e}rement, on a comme L/K est s\'{e}parable, que la diff\'{e}rentielle $d:K\rightarrow \Omega^{1}_{A/k}\stackrel{A}{\otimes} K$ s'\'{e}tend en une d\'{e}rivation $d^{*}:L\rightarrow \Omega^{1}_{A/k}\stackrel{A}{\otimes} L$. On a donc la structure que l'on voulait sur C' en posant:\\
\begin{center}
$d^{*}(\omega\otimes l)=ld\omega+d^{*}l\wedge\omega$ $(l\in L,\omega \in E)$.
\end{center}
Il reste \`{a} voir que i est un morphisme de complexes. Pour cela, il suffit de voir que le diagramme suivant commute:\\
\begin{center}
$\xymatrix{&\Omega^{1}_{A/k}\stackrel{A}{\otimes} L\ar[dd]^{i}\\L\ar[ur]^{d^{*}}\ar[dr]_{d}\\&\Omega^{1}_{B/k}\stackrel{B}{\otimes} L}$
\end{center}$\\$
Cela vient du fait que  $i\circ d^{*}$ et d sont deux d\'{e}rivations de $L\rightarrow \Omega^{1}_{B/k}\stackrel{B}{\otimes} L$ et coïncident sur K.
La propri\'{e}t\'{e} universelle de $\Omega^{\bullet}_{A/k}$ nous permet de  construire le morphisme inverse $\lambda: \Omega^{1}_{B/k}\stackrel{B}{\otimes} L\rightarrow C'$.

\begin{lemma}
Il existe une unique application $Tr_{B/A}:\Omega^{\bullet}_{B/k}\stackrel{B}{\otimes} L\rightarrow \Omega^{\bullet}_{A/k}\stackrel{A}{\otimes} K$ telle que:
\begin{enumerate}
\item
$Tr_{B/A}$ est la trace usuelle de L vers K en dimension 0.
\item
$Tr_{B/A}$ est $\Omega^{\bullet}_{A/k}$-lin\'{e}aire (i-e additive et $Tr_{B/A}(\omega\wedge\eta)=\omega\wedge Tr_{B/A}(\omega))$.
\end{enumerate}
\end{lemma}
 \begin{proposition}
$Tr_{B/A}$ est un morphisme de complexes.
\end{proposition}
\begin{proof}
Supposons $z\in L$. Soit $f(z)=z^{n}-\sum{b_{i}z^{n-i}}$  son polynôme minimal sur K. Si $n^{*}=[L:K]$, alors $Tr(z)=\frac{n^{*}}{n}b_{1}$. De plus, on a $df(z)=0$, d'o\`{u} $dz=\frac{1}{f'(z)}\sum{z^{n-i}db_{i}}$. Donc par d\'{e}finition de Tr, $Tr(dz)=\sum{Tr(\frac{z^{n-i}}{f'(z)})db_{i}}$, ce qui est exactement $\frac{n^{*}}{n}db_{1}$ par [26,III,§6].Donc Tr(dz)=dTr(z). Maintenant si $\eta\in \Omega^{\bullet}_{B/k}\stackrel{B}{\otimes} L$, $\eta=z\omega$ avec $z\in L$ et $\omega \in \Omega^{\bullet}_{B/k}$ par la proposition, donc $Tr(d\eta)=Tr(dz\wedge\omega+zd\omega)=dTr(z)\wedge\omega+Tr(z)d\omega=d(Tr(z)\omega)=dTr(\eta)$, ce qu'on voulait.
 \end{proof}
 \begin{proposition}
$Tr_{B/A}$ envoie  $\Omega^{\bullet}_{B/k}$ vers $\Omega^{\bullet}_{A/k}$ si A est lisse.
\end{proposition}
 \begin{proof}
 On a d\'{e}j\`{a} que $\Omega^{n}_{A/k}$ est projectif de type fini, comme A lisse. Comme A est normal, int\`{e}gre et L/K s\'{e}parable, la fermeture int\'{e}grale A', de A dans L est une
A-alg\`{e}bre finie. De plus, si on prend un ouvert U, dont le compl\'{e}mentaire est de codimension au moins 2 dans A, le morphisme de restriction est un isomorphisme. Donc, il suffit de montrer que $Tr_{B/A}$ envoie $\Omega^{\bullet}_{A'/k}$ dans $\Omega^{\bullet}_{A/k}\otimes K$ et s'\'{e}tend aux points de codimension 1 de Spec A. On se ram\`{e}ne donc au cas o\`{u} A est un trait. Comme on peut faire n'importe quelle extension \'{e}tale sur A, on peut supposer A strictement hens\'{e}lien.

Dans ce cas-l\`{a}, i-e A un trait strictement hens\'{e}lien, on a:\\ $A'=A[t]/(t^{n}-z)$ o\`{u} z est un g\'{e}n\'{e}rateur de l'id\'{e}al maximal et n entier positif. Alors $dt=\frac{1}{nt^{n-1}}dz$ (car k=0) et le groupe de Galois de A' sur A, agit par $\sigma(t)=\zeta t$ o\`{u} $\zeta$ est une racine primitive de l'unit\'{e}. Donc $Tr(t^{m})=0$ sauf si n divise m. On en d\'{e}duit $Tr(t^{m}dt)=0$ si $m\neq-1[n]$ ou $z^{(m-n+1)/n}dz$ si $m=-1[n]$.

 \end{proof}
 Etant donn\'{e}e une correspondance $\Gamma\in Cor(X,Y)$, on notera $[\Gamma]$ la fl\`{e}che $Tr_{\Gamma/X}$ compos\'{e}e avec le morphisme canonique 
 $ pr_{2}^{*}(\Omega^{\bullet}_{Y})\rightarrow  pr_{1}^{*}(\Omega^{\bullet}_{\Gamma})$.
 Montrons d\'{e}sormais la compatibilit\'{e} par rapport \`{a} la composition des correspondances.
 Soit donc, $W\in X\times Y$, $ W'\in Y\times Z$,avec X, Y et Z affines lisses. En effet, un th\'{e}or\`{e}me de Voevodsky, nous assure que l'on peut se restreindre \`{a} une telle sous-cat\'{e}gorie ([5 bis]). Comme $\Omega^{n}_{X}$ est localement libre, on peut remplacer X par son point g\'{e}n\'{e}rique Spec(F).
 On a donc le diagramme suivant:
 \xymatrix{W\subseteq X\times Y\ar[d]\\X=Spec(F)}
  $\\$$\\$On peut de même remplacer X par W et on  se ram\`{e}ne donc au cas o\`{u} W est le graphe de l'inclusion $i_{X}:X\rightarrow Y$ avec X, Y  affines et lisses.
 Maintenant, on consid\`{e}re une chaîne de sous-vari\'{e}t\'{e}s lisses de codimension un:
 $X=X_0\subset X_{1}\subset...\subset X_{N}=Y$.$\\$
Montrons par r\'{e}currence descendante sur N, que l'on peut se ramener 
 au cas o\`{u} X est de codimension 1 dans Y.
 Supposons la propri\'{e}te vraie au rang N-j et montrons la au rang N-j-1.
 On a alors :
\begin{center}
 $X_{N-j-1}\stackrel{i_{N-j-1}}{\rightarrow} X_{N-j}\stackrel{i_{N-j}}{\rightarrow} Y$.
\end{center}
 Par hypoth\`{e}se de r\'{e}currence, on a $[W']\circ i_{N-j}=[W'\circ i_{N-j}]$ et $i_{N-j-1}$ est une immersion de codimension 1 entre sous-sch\'{e}mas lisses, donc on peut appliquer l'hypoth\`{e}se de r\'{e}currence.
Maintenant, quitte \`{a} remplacer Z par la normalisation de W', on peut supposer que W' est la transpos\'{e}e du graphe d'un F-morphisme fini surjectif de Z vers Y. Comme car k=0 et Y et Z lisses, un tel morphisme est \'{e}tale, on peut donc  prendre Y=Spec(F[x])  et Z=Spec(F[t]) avec $t^{n}=x$.$\\$$\\$
 Alors la composition est $i_{X}\circ W'$ est nx ,  et on a juste \`{a} v\'{e}rifier en degr\'{e} z\'{e}ro. Dans ce cas, c'est juste le morphisme trace sur le faisceau structural et en degr\'{e} un, o\`{u} c'est le calcul fait dans la proposition pr\'{e}c\'{e}dente.

\end{proof} $\\$

\textbf{Remarque}:\\On vient donc de voir que le complexe de de Rham est canoniquement muni de transferts. On a même vu dans la preuve que l'on a des transferts sur les formes diff\'{e}rentielles m\'{e}romorphes. L'extension au cas logarithmique se fait donc de la facon suivante:$\\$
-Soit $j_{X}$ (resp $j_{Y}$) l'inclusion de X dans $\bar{X}$ (et pareillement pour Y), une compactification de Y étant fix\'{e}e.$\\$
-D'apres le lemme 6, on a vu que si on a $\Gamma: X\rightarrow Y$ dans SmCor(k) et est irr\'{e}ductible, on a un unique $\bar{\Gamma}: \bar{X}\rightarrow \bar{Y}$ dans SmCor(k) qui prolonge $\Gamma$ une fois que l'on a fix\'{e} une compactifications $\bar{Y}$ de Y.$\\$
-On a alors d\'{e}j\`{a} un morphisme pour $\Gamma$: \\ $[\Gamma]: j_{Y,*}\Omega^{\bullet}_{Y}\rightarrow  j_{X,*}\Omega^{\bullet}_{X}$, et donc en particulier il suffit de voir que  $[\bar{\Gamma}]$ envoie $\Omega^{\bullet}_{\bar{Y}}(log)$ sur $\Omega^{\bullet}_{\bar{X}}(log)$. Comme \`{a} nouveau, pour tout $i\geq0$, $\Omega^{i}_{\bar{X}}(log)$ est localement libre, on se ram\`{e}ne \`{a} nouveau au cas o\`{u} $\bar{X}\rightarrow \bar{Y}$ est un morphisme fini de courbes au-dessus d'une certaine extension de corps F, avec le diviseur \`{a} l'infini t=0 et z=0 et $t^{n}=z$ avec les mêmes notations que ci-dessus et on a juste besoin de faire le calcul direct pour $\frac{dt}{t}=\frac{dz}{nz}$ ce qu'on voulait.

$\\$
Il ne nous reste donc plus qu'\`{a} montrer que ça ne d\'{e}pend pas de la compactification, on a alors la proposition suivante:

\begin{proposition}
Soit $\Gamma: X\rightarrow Y$ dans SmCor(k) et $\bar{X}_{1}$, $\bar{X}_{2}$, $\bar{Y}_{1}$ et $\bar{Y}_{2}$ des compactifications de X et Y.
On a alors que $\bar{\Gamma}: \bar{X_{1}}\rightarrow \bar{Y}_{1}$ est ind\'{e}pendant des compactifications choisies.
\end{proposition}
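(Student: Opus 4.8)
La strat\'{e}gie consiste \`{a} se ramener au cas o\`{u} l'une des compactifications domine l'autre, puis \`{a} exploiter l'unicit\'{e} du prolongement et la fonctorialit\'{e} du transfert. D'abord je remarque que les compactifications lisses de $X$ de bord un diviseur \`{a} croisements normaux forment un syst\`{e}me cofiltrant: \'{e}tant donn\'{e}es $\bar{X}_{1}$ et $\bar{X}_{2}$, la r\'{e}solution des singularit\'{e}s de Hironaka appliqu\'{e}e \`{a} l'adh\'{e}rence de $X$ plong\'{e} diagonalement dans $\bar{X}_{1}\times \bar{X}_{2}$ produit une compactification $\bar{X}_{3}$ munie de morphismes propres $\bar{X}_{3}\rightarrow \bar{X}_{i}$ induisant l'identit\'{e} sur $X$; de m\^{e}me pour $Y$. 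Deux choix quelconques \'{e}tant ainsi compar\'{e}s via un raffinement commun, il suffit de traiter la situation o\`{u} l'on dispose de morphismes de compactifications $a:\bar{X}_{2}\rightarrow \bar{X}_{1}$ et $b:\bar{Y}_{2}\rightarrow \bar{Y}_{1}$ au-dessus de $X$ et $Y$.

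Ensuite, je pr\'{e}tends que $a\times b$ envoie $\bar{\Gamma}_{2}$ dans $\bar{\Gamma}_{1}$. En effet, par le lemme de prolongement des correspondances, $\bar{\Gamma}_{i}$ n'est autre que l'adh\'{e}rence de $\Gamma$ dans $\bar{X}_{i}\times \bar{Y}_{i}$; comme $a\times b$ pr\'{e}serve l'ouvert dense $\Gamma$ et est propre, elle induit un morphisme $\bar{\Gamma}_{2}\rightarrow \bar{\Gamma}_{1}$ compatible aux deux projections. La fonctorialit\'{e} du morphisme trace \'{e}tablie plus haut, combin\'{e}e \`{a} la contravariance du complexe de de Rham logarithmique par rapport au couple $(X,\bar{X})$, entra\^{\i}ne alors la commutativit\'{e} du carr\'{e} suivant dans $D^{+}(k_{bifilt})$:
\begin{center}
\xymatrix{R\Gamma(\bar{Y}_{1},\Omega^{\bullet})\ar[d]_{b^{*}}\ar[r]^{[\bar{\Gamma}_{1}]}&R\Gamma(\bar{X}_{1},\Omega^{\bullet})\ar[d]^{a^{*}}\\R\Gamma(\bar{Y}_{2},\Omega^{\bullet})\ar[r]^{[\bar{\Gamma}_{2}]}&R\Gamma(\bar{X}_{2},\Omega^{\bullet})}
\end{center}
o\`{u} les fl\`{e}ches horizontales sont les transferts et les verticales les tir\'{e}s-en-arri\`{e}re de formes diff\'{e}rentielles.

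Il reste \`{a} voir que les fl\`{e}ches verticales sont des isomorphismes. D'apr\`{e}s la proposition sur le complexe de de Rham logarithmique et la th\'{e}orie de [7], $a^{*}$ et $b^{*}$ sont des quasi-isomorphismes bifiltr\'{e}s: ils induisent l'identit\'{e} sur la cohomologie de de Rham intrins\`{e}que $H_{DR}(X/k)$, resp. $H_{DR}(Y/k)$, et sont compatibles aux filtrations de Hodge et du poids, elles-m\^{e}mes ind\'{e}pendantes de la compactification gr\^{a}ce au d\'{e}calage $\mathrm{Dec}$. La commutativit\'{e} du carr\'{e} montre donc que $[\bar{\Gamma}_{1}]$ et $[\bar{\Gamma}_{2}]$ d\'{e}finissent le m\^{e}me morphisme $R_{DR}(Y)\rightarrow R_{DR}(X)$, d'o\`{u} l'ind\'{e}pendance annonc\'{e}e et la bonne d\'{e}finition du foncteur $R_{DR}$.

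Le point d\'{e}licat est la commutativit\'{e} du carr\'{e} au niveau des complexes, et non seulement en cohomologie. Elle se ram\`{e}ne \`{a} la compatibilit\'{e} du morphisme trace $Tr_{\bar{\Gamma}/\bar{X}}$ avec le tir\'{e}-en-arri\`{e}re le long de $a\times b$; par la r\'{e}duction \`{a} un morphisme fini de courbes effectu\'{e}e dans la preuve de la fonctorialit\'{e} des transferts, cette compatibilit\'{e} se v\'{e}rifie localement au voisinage du bord \`{a} l'infini, o\`{u} le calcul $\frac{dt}{t}=\frac{dz}{nz}$ garantit que les p\^{o}les logarithmiques sont respect\'{e}s.
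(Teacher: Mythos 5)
Votre preuve est correcte, et sa premi\`{e}re \'{e}tape est exactement celle du texte: on domine les deux compactifications par une troisi\`{e}me, d\'{e}singularisation de l'adh\'{e}rence de $X$ dans $\bar{X}_{1}\times\bar{X}_{2}$ (et de m\^{e}me pour $Y$), ce qui ram\`{e}ne au cas d'un morphisme propre de compactifications induisant l'identit\'{e} sur $X$. C'est la suite qui diff\`{e}re. Le texte conclut au niveau des faisceaux par deux faits sur les p\^{o}les logarithmiques: l'image r\'{e}ciproque par un morphisme propre d'une forme \`{a} p\^{o}les logarithmiques a des p\^{o}les logarithmiques, et une forme rationnelle dont l'image r\'{e}ciproque a des p\^{o}les logarithmiques en a elle-m\^{e}me; comme les faisceaux logarithmiques sont localement libres, donc plong\'{e}s dans les formes rationnelles o\`{u} les transferts sont, par construction, la trace associ\'{e}e \`{a} $\Gamma$, la compatibilit\'{e} en d\'{e}coule directement. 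Vous organisez plut\^{o}t cette compatibilit\'{e} en un carr\'{e} commutatif dans $D^{+}(k_{bifilt})$ dont les verticales $a^{*}$, $b^{*}$ sont des quasi-isomorphismes bifiltr\'{e}s (Deligne), ce qui a l'avantage d'\'{e}noncer exactement la forme d'ind\'{e}pendance dont le texte se sert ensuite, \`{a} savoir que le syst\`{e}me index\'{e} par la cat\'{e}gorie cofiltrante des compactifications est essentiellement constant. Le prix \`{a} payer est le point que vous signalez vous-m\^{e}me: la commutativit\'{e} du carr\'{e} au niveau des complexes. Votre r\'{e}duction aux courbes n'est pas l'argument adapt\'{e} \`{a} cet endroit: le calcul $\frac{dt}{t}=\frac{dz}{nz}$ sert \`{a} v\'{e}rifier que les transferts pr\'{e}servent les p\^{o}les logarithmiques (c'est la d\'{e}finition m\^{e}me de $[\bar{\Gamma}_{i}]$), tandis que l'\'{e}galit\'{e} des deux compos\'{e}es du carr\'{e} se teste au point g\'{e}n\'{e}rique, sur les formes rationnelles, o\`{u} les deux fl\`{e}ches sont la m\^{e}me trace --- c'est pr\'{e}cis\'{e}ment ce que fournissent les deux faits du texte. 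Avec cette substitution, votre argument est complet et \'{e}quivalent \`{a} celui du texte.
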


\begin{proof}
On pose $\bar{Z}:=\bar{X}_{1}\times\bar{X}_{2}$, puis on note $\bar{X}_{3}$ une d\'{e}singularisation de l'adh\'{e}rence de X dans $\bar{Z}$, on a alors le diagramme suivant:\\
\begin{center}
\xymatrix{&\bar{X}_{1}\\ X\ar[dr]\ar[rr]\ar[ur]&&\bar{X}_{3}\ar[ul]\ar[dl]\\&\bar{X}_{2}}
\end{center}
$\\$
  ce qui nous permet de nous ramener au cas o\`{u} on a un diagramme du type:\\
  \xymatrix{&\bar{X}_{1}\ar[dd]^{p}\\X\ar[ur]\ar[dr]\\&\bar{X}_{2}} \\
  o\`{u} p est un morphisme propre et pareillement pour Y.
Maintenant,la proposition vient du fait qu'un morphisme propre envoie les formes \`{a} pôles logarithmiques sur les formes \`{a} pôles logarithmiques et  qu'une  forme rationnelle sur $\bar{ X}$ qui s'envoie sur une forme \`{a} pôles logarithmiques sur  
$\bar {X'}$ a elle-même des pôles logarithmiques  sur $\bar X$.

\end{proof}
On a alors d'apr\`{e}s la proposition 17 que
le morphisme $[\bar{\Gamma}]: pr_{2}^{*}(\Omega^{\bullet}_{\bar{Y}}(log))\rightarrow  pr_{1}^{*}(\Omega^{\bullet}_{\bar{X}}(log))$ s'\'{e}tend aux   r\'{e}solutions flasques canoniques
\`{a} $\mathcal{G}_{\text{\'{e}t}}^{\bullet}(\Omega^{\bullet}(log))$.
On repr\'{e}sente alors $ R \Gamma( \bar{X},\Omega^{\bullet}_{\bar{X}}(\log))$ par $\pi_{\bar{X}*}\mathcal{G}_{\text{\'{e}t}}^{\bullet}\Omega^{\bullet}_{\bar{X}}(log)$. Pour assurer l'ind\'{e}pendance par rapport aux choix des compactifications, on prend la limite inductive sur toutes les compactifications, qui est un syst\`{e}me essentiellement constant par ci-dessus. On a donc besoin, du lemme suivant:
\begin{lemma}
La cat\'{e}gorie  I compos\'{e}e des couples $(X,\bar{X})$ avec X lisse sur k et $\bar{X}$ compactification lisse  et les morphismes naturels entre couples, est cofiltrante.
\end{lemma}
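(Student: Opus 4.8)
The plan is to check the three defining axioms of a cofiltered category directly. The relevant morphisms $(X,\bar{X}_1)\to(X,\bar{X}_2)$ are the proper morphisms $\bar{X}_1\to\bar{X}_2$ whose restriction to the dense open subscheme $X$ is the identity, so that $I$ compares the various smooth compactifications of a fixed smooth $X$. Nonemptiness is immediate: since $\mathrm{car}\,k=0$, le th\'eor\`eme de Hironaka (d\'ej\`a utilis\'e \`a la fin de la section 5.1) fournit, pour tout $X$ lisse s\'epar\'e, au moins un $\bar{X}$ lisse projectif avec $\bar{X}\setminus X$ un diviseur \`a croisements normaux.

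For the existence of a common lower bound I would reuse the construction already carried out in the proof of Proposition 17. Given $(X,\bar{X}_1)$ and $(X,\bar{X}_2)$, the two open immersions assemble into a single morphism $X\to\bar{X}_1\times_k\bar{X}_2$; let $\bar{Z}$ be the closure of its image and let $\bar{X}_3\to\bar{Z}$ be a resolution of singularities, chosen par Hironaka so that $\bar{X}_3$ is smooth projective and $\bar{X}_3\setminus X$ is a normal crossings divisor. Composing this resolution with the two projections $\bar{X}_1\times_k\bar{X}_2\to\bar{X}_i$ produces proper morphisms $\bar{X}_3\to\bar{X}_1$ and $\bar{X}_3\to\bar{X}_2$ that restrict to the identity on $X$; hence $(X,\bar{X}_3)$ is an object of $I$ dominating both. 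Properness is automatic since the projections are projective and $\bar{Z}$ is closed.

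The coequalizing axiom is then vacuous, because $I$ has at most one morphism between any two objects. Indeed, two parallel arrows $f,g:(X,\bar{X}_1)\to(X,\bar{X}_2)$ both agree with the inclusion $X\hookrightarrow\bar{X}_2$ on the open subscheme $X$; since $X$ is dense in $\bar{X}_1$ (int\`egre une fois r\'eduit aux composantes connexes) and $\bar{X}_2$ is separated, the closed locus on which $f$ and $g$ coincide is dense, hence all of $\bar{X}_1$, so $f=g$. Prenant $h=\mathrm{id}$ on satisfait la derni\`ere condition, et $I$ est en fait un pr\'eordre cofiltrant.

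The only real input is thus Hironaka's theorem, invoked twice — once for the existence of a single compactification and once for the simultaneous resolution with normal crossings boundary — while the rest is the separatedness-and-density argument. The step deserving the most care in a fully written proof is the second one: one must ensure that $\bar{X}_3$ can be taken smooth projective with normal crossings boundary along $X$, and that the induced projections are genuine morphisms of $I$, i.e. proper and restricting to $\mathrm{id}_X$. This is precisely the configuration already treated in Proposition 17, so no new difficulty arises.
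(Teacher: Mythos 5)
Your proof is correct, and it coincides with the paper's on the main axiom: for the common lower bound you take the closure of $X$ in $\bar{X}_1\times_k\bar{X}_2$ and desingularize by Hironaka, which is exactly what the paper does (deferring to the construction in the proof of the preceding proposition). Where you genuinely diverge is on the equalizer axiom. The paper treats it as a separate construction: given two parallel arrows $\bar{X}_1\rightrightarrows\bar{X}_2$, it takes for $\bar{X}_0$ a desingularization of the closure of $X$ in the fibre product $\bar{X}_1\times_{\bar{X}_2}\bar{X}_1$. You instead observe that the axiom is vacuous, because $I$ is a preorder: two parallel morphisms agree on the dense open $X$, their equalizer is closed by separatedness of $\bar{X}_2$, and since $\bar{X}_1$ is reduced (being smooth) a closed subscheme containing a dense open is everything, whence $f=g$. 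Your route is the more economical one, and it also clarifies the paper's own argument: to conclude from the fibre-product construction one still needs to know that $f\circ\mathrm{pr}_1=g\circ\mathrm{pr}_1$ on $\bar{X}_0$, which requires precisely the density-plus-separatedness fact you isolate; so the paper's extra desingularization buys nothing that your uniqueness observation does not already give. Conversely, the paper's formulation has the marginal advantage of not needing the reducedness of $\bar{X}_1$ as an explicit input, though in this smooth setting that costs nothing. Both proofs rest on the same two inputs — Hironaka and separatedness — and your identification of $I$ as a cofiltered preorder is the cleaner statement of why the system of compactifications is essentially constant, which is what the subsequent limit argument actually uses.
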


\begin{proof}
On a d\'{e}j\`{a} vu dans la proposition pr\'{e}c\'{e}dente que l'on a pour i, j, k des objets de I:

$\xymatrix{&i\\k\ar@{.>}[dr]\ar@{.>}[ur]\\&j}$

Il ne nous reste \`{a} montrer que l'on a un diagramme du type:

$\xymatrix {&\bar{X_{0}}\ar@{.>}[d]\\X\ar@{.>}[ur]\ar[r]\ar[dr]&\bar{X_{1}}\ar[d]\ar@<2pt>[d] \\&\bar{X_{2}}}$

Dans ce cas, on choisit pour $\bar{X_{0}}$ une d\'{e}singularisation de  l'adh\'{e}rence de X dans $\bar{X_{1}}\times_{\bar{X_{2}}}\bar{X_{1}}$.

\end{proof}
On a  donc un foncteur $\tilde{R}_{DR}:C^{b}(SmCor/k)^{op}\rightarrow C^{b}(C^{+}(k_{bifilt}))$ ainsi que des foncteurs:\\ 

 $Tot:C^{b}(C^{+}(k_{bifilt}))\rightarrow C^{+}(k_{bifilt})$ et $Dec_{W}:C^{+}(k_{bifilt})\rightarrow C^{+}(k_{bifilt})$.

On note alors $R_{DR}$ le foncteur composé que l'on prolonge naturellement en un foncteur triangul\'{e} de:\\

\begin{center}
$R_{DR}:K^{b}(SmCor/k)^{op}\rightarrow D^{+}(k_{bifilt})$
\end{center}

En utilisant l'invariance par homotopie  et la suite de Mayer-Vietoris pour la cohomologie de de Rham d'une part et d'autre part le fait que  $D^{+}(k_{bifilt})$ est pseudo-ab\'{e}lienne d'apres [3]. On en d\'{e}duit que le foncteur s'\'{e}tend en un foncteur triangul\'{e} $R_{DR}:DM^{eff}_{gm}(k)^{op}\rightarrow D^{+}(k_{bifilt})$.

Il ne reste plus qu'\`{a} v\'{e}rifier la structure tensorielle:

\begin{lemma}
Le foncteur triangul\'{e} $R_{DR}$ est tensoriel, on a donc pour tout X, Y k-sch\'{e}mas un isomorphisme:$\\$
\begin{center}
$R_{DR}(X)\otimes R_{DR}(Y)\rightarrow R_{DR}(X\times Y).$
\end{center}
 $\\$

\end{lemma}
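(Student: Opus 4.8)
The plan is to build the Künneth comparison at every stage of the construction of $R_{DR}$ and to check it is an isomorphism. The starting point is the exterior product of logarithmic forms. Having fixed compactifications $\bar{X},\bar{Y}$ with normal crossing divisors $D_{X}=\bar{X}-X$, $D_{Y}=\bar{Y}-Y$, the product $\bar{X}\times\bar{Y}$ is again smooth and $\overline{X\times Y}$ may be taken equal to it, with $D=pr_{X}^{-1}(D_{X})\cup pr_{Y}^{-1}(D_{Y})$ a normal crossing divisor. Since $\Omega^{1}_{\bar{X}\times\bar{Y}}(\log D)=pr_{X}^{*}\Omega^{1}_{\bar{X}}(\log D_{X})\oplus pr_{Y}^{*}\Omega^{1}_{\bar{Y}}(\log D_{Y})$, the wedge product induces an isomorphism of complexes de faisceaux
$$pr_{X}^{*}\Omega^{\bullet}_{\bar{X}}(\log D_{X})\otimes pr_{Y}^{*}\Omega^{\bullet}_{\bar{Y}}(\log D_{Y})\xrightarrow{\ \sim\ }\Omega^{\bullet}_{\bar{X}\times\bar{Y}}(\log D),$$
strictement compatible avec la filtration de Hodge $F$ (c'est la convolution des deux filtrations bêtes $\sigma_{\geq p}$) et avec la filtration par le poids $W$. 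First I would record that this map commutes with the transfers of Proposition 15: because the trace $Tr_{\Gamma/X}$ is multiplicative for exterior products, on a $[\bar{\Gamma}\otimes\bar{\Gamma}']=[\bar{\Gamma}]\boxtimes[\bar{\Gamma}']$, so the Künneth morphism lives in the category of faisceaux avec transferts.

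Next I would descend this to the canonical flasque resolutions. The logarithmic de Rham complex is a faisceau quasi-monoïdal symétrique avec transferts au sens de la Définition 15, le produit $\oslash$ étant précisément le produit extérieur $\wedge$; by the étale analogue of Proposition [15bis] 3.11 the monade de Godement $\mathcal{G}^{tr}_{\text{\'et}}$ preserves this structure, whence a natural lax-monoidal comparison
$$\mathcal{G}^{\bullet}_{\text{\'et}}\Omega^{\bullet}_{\bar{X}}(\log D_{X})\otimes\mathcal{G}^{\bullet}_{\text{\'et}}\Omega^{\bullet}_{\bar{Y}}(\log D_{Y})\longrightarrow\mathcal{G}^{\bullet}_{\text{\'et}}\big(\Omega^{\bullet}_{\bar{X}}(\log D_{X})\boxtimes\Omega^{\bullet}_{\bar{Y}}(\log D_{Y})\big)\xrightarrow{\ \sim\ }\mathcal{G}^{\bullet}_{\text{\'et}}\Omega^{\bullet}_{\bar{X}\times\bar{Y}}(\log D).$$
Applying $\pi_{*}$ and forming the total complex of the resulting double complex yields a bifiltered morphism $R_{DR}(X)\otimes R_{DR}(Y)\to R_{DR}(X\times Y)$. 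That it is a quasi-isomorphisme follows from the fact that $\pi_{*}\mathcal{G}^{\bullet}_{\text{\'et}}$ computes $R\Gamma$, so that the induced map on $\mathbb{H}(\bar{X}\times\bar{Y},-)$ is the produit croisé de de Rham, an isomorphism by the Künneth axiom for algebraic de Rham cohomology (Définition 2 (1)); working over the field $k$ the tensor product is exact, so no terme de Tor appears, and the strictness of $F$ and $W$ noted above upgrades this to a filtered quasi-isomorphisme.

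It then remains to pass through the décalage and to propagate the isomorphism along the localizations. The foncteur $Dec_{W}$ is compatible avec le produit tensoriel up to a canonical filtered quasi-isomorphisme, so the map above induces the desired isomorphism after $Dec_{W}$; one must then verify the contraintes d'associativité, de symétrie et d'unité (l'unité étant $R_{DR}(Spec(k))=k$ concentré en degré $0$), which is formal but necessary for coherence. Finally, since the whole construction is natural in the couple $(X,\bar{X})$ and the catégorie $I$ des compactifications is cofiltrante (Lemme 8), the comparison is independent of the choices; being tensoriel sur $K^{b}(SmCor/k)^{op}$ it descends through the localisation de Verdier définissant $DM^{eff}_{gm}(k)$ and through the inversion de $\mathbb{Z}(1)$ définissant $DM_{gm}(k)$, puisque $M_{gm}$ est lui-même monoïdal et que l'identité de Künneth $M_{gm}(X)\otimes M_{gm}(Y)=M_{gm}(X\times Y)$ vaut.

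The hard part will be the simultaneous compatibility in the second paragraph: one must ensure that the exterior product of forms interacts correctly at once with the résolution flasque de Godement, with the transferts, and with both filtrations, so that the Künneth map is genuinely a morphisme bifiltré that one may push into $D^{+}(k_{bifilt})$. In particular, the delicate point is the behaviour of $Dec_{W}$ under $\otimes$, since the décalage does not commute on the nose with the convolution des filtrations; making precise the canonical filtered quasi-isomorphisme there is where the real work lies, the remaining verifications being formal consequences of the universal properties already established.
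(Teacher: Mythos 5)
Your proposal is correct and follows essentially the same route as the paper: the quasi-mono\"{\i}dal sym\'etrique structure (ext\'erieur product) on $\Omega^{\bullet}_{\bar{X}}(\log)$ with transferts, preserved by the Godement monad (Proposition 19), produces the comparison morphism, and the fact that it is an isomorphism is deduced from the K\"unneth theorem for algebraic de Rham cohomology of smooth varieties. The only presentational difference is that where you say ``forming the total complex of the resulting double complex,'' the paper makes the cosimplicial-to-complex comparison explicit via the Eilenberg--MacLane transformation $\otimes^{EML}$, and the compatibility of $Dec_{W}$ with $\otimes$ that you rightly flag as delicate is in fact passed over in silence in the paper's own proof.
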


\begin{proof}
On va d'abord montrer que le foncteur est quasi-tensoriel;
 on a d\'{e}j\`{a}  que $\Omega^{\bullet}_{\bar{X}}(log)$  est muni de transferts et est quasi-monoïdal sym\'{e}trique, donc par la proposition 18, on obtient que $\mathcal{G}_{\text{\'{e}t}}^{\bullet}\Omega^{\bullet}_{\bar{X}}(log)$ est \'{e}galement quasi-monoïdal sym\'{e}trique. Cela entraîne que le foncteur
  $\tilde{R}_{DR}:SmCor/k\rightarrow C^{+}(k_{bifilt})$ \\est quasi-monoïdal sym\'{e}trique, d'o\`{u} un morphisme canonique de foncteurs sur $SmCor/k\otimes SmCor/k$:\\
 
\begin{center}
 $\otimes:\tilde{R}_{DR}(-)\otimes\tilde{R}_{DR}(-)\rightarrow
  \tilde{R}_{DR}(-\times -)$ 
\end{center}
$\\$
  associatif et commutatif. Ce dernier nous fournit des morphismes de foncteurs associatifs et commutatifs:\\
  
\begin{center}
\xymatrix{\mathcal{C}\tilde{R}_{DR}(-)\otimes \mathcal{C}\tilde{R}_{DR}(-)\ar[d]&\mathcal{C}[\tilde{R}_{DR}(-)\otimes\tilde{R}_{DR}(-)]\ar[l]_{\otimes^{EML}}\ar[r]^{\mathcal{C}\otimes}&\mathcal{C}\tilde{R}_{DR}(-\times-)\ar[d]\\\tilde{R}_{DR}(-)\otimes\tilde{R}_{DR}(-)&&\tilde{R}_{DR}(-\times -)}
\end{center}$\\$
  o\`{u} $\otimes^{EML}$ d\'{e}signe la transformation d'Eilenberg-MacLane [10 1/2]. On a donc des morphismes de bifoncteurs associatifs et commutatifs:\\

\begin{center}
\xymatrix{C^{b}[\mathcal{C}\tilde{R}_{DR}(-)\otimes \mathcal{C}\tilde{R}_{DR}(-)]&C^{b}[\mathcal{C}[\tilde{R}_{DR}(-)\otimes\tilde{R}_{DR}(-)]]\ar[l]_{C^{b}\otimes^{EML}}\ar[r]^{C^{b}\mathcal{C}\otimes}&C^{b}[\mathcal{C}\tilde{R}_{DR}(-\times-)]\ar[d]\\\textbf{R}_{DR}(-)\otimes\textbf{R}_{DR}(-)\ar[u]&&\textbf{R}_{DR}(-\times -)}
\end{center}
  Comme en plus, le morphisme d'augmentation:\\
 $\Omega^{\bullet}_{k}(log)\rightarrow \mathcal{G}_{\text{\'{e}t}}^{\bullet}\Omega^{\bullet}_{k}(log)$  est un quasi-isomorphisme qui rend le diagramme commutatif suivant:\\
\begin{center}
   \xymatrix{\textbf{R}_{DR}(k)\otimes \textbf{R}_{DR}(-)&C^{b}[\mathcal{C}[\tilde{R}_{DR}(k)\otimes\tilde{R}_{DR}(-)]]\ar[l]_{C^{b}\otimes^{EML}}\ar[r]^{C^{b}\mathcal{C}\otimes}&\textbf{R}_{DR}(k\times-)\ar[d]\\\Omega^{\bullet}_{k}(log)\otimes\textbf{R}_{DR}(-)\ar[u]\ar[rr]&&\textbf{R}_{DR}(-)}
\end{center}

Le fait que le foncteur est tensoriel provient alors  du fait que le morphisme de Künneth en cohomologie de de Rham est un isomorphisme pour les vari\'{e}t\'{e}s lisses sur un corps k, ce qui entraîne l'isomorphisme pour tout motif mixte.

\end{proof}
En regardant le motif  r\'{e}duit de $\mathbb{P}^{1}$, on voit qu'il a pour image dans $D^{+}(k_{bifilt})$, k en degr\'{e} de Hodge 1. On a donc obtenu le r\'{e}sultat suivant.
\begin{lemma}
Il existe un isomorphisme $\phi:R_{DR}(\mathbb{Z}(1))\rightarrow (k,1)$.
\end{lemma}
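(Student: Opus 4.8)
Le plan est de d\'{e}rouler la d\'{e}finition $\mathbb{Z}(1)=\tilde{M}_{gm}(\mathbb{P}^{1})[-2]$ et d'exploiter la fonctorialit\'{e} triangul\'{e}e de $R_{DR}$. Le motif r\'{e}duit $\tilde{M}_{gm}(\mathbb{P}^{1})$ est repr\'{e}sent\'{e} dans $K^{b}(SmCor/k)$ par le complexe \`{a} deux termes $[\mathbb{P}^{1}]\rightarrow[Spec(k)]$ associ\'{e} au morphisme structural. Comme $R_{DR}$ est contravariant sur les sch\'{e}mas (il envoie un sch\'{e}ma sur sa cohomologie de de Rham) et triangul\'{e}, il transforme ce complexe en le facteur r\'{e}duit de la cohomologie de de Rham de $\mathbb{P}^{1}$, c'est-\`{a}-dire le conoyau (ou le facteur compl\'{e}mentaire, gr\^{a}ce \`{a} la scission ci-dessous) du tir\'{e} en arri\`{e}re $R_{DR}(Spec(k))\rightarrow R_{DR}(\mathbb{P}^{1})$.

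Ensuite, je calculerais $R_{DR}(\mathbb{P}^{1})=Dec\,R\Gamma(\mathbb{P}^{1},\Omega^{\bullet}_{\mathbb{P}^{1}})$ : il n'y a aucun p\^{o}le logarithmique puisque $\mathbb{P}^{1}$ est d\'{e}j\`{a} projectif, de sorte que $D=\emptyset$. Par la suite spectrale d'hypercohomologie $E_{2}^{pq}=H^{q}(\mathbb{P}^{1},\Omega^{p})\Rightarrow H^{p+q}_{DR}$ rappel\'{e}e plus haut, sa cohomologie est $k$ en degr\'{e} cohomologique $0$ (de degr\'{e} de Hodge $0$) et $k$ en degr\'{e} cohomologique $2$ (de degr\'{e} de Hodge $1$). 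Le point rationnel $\infty$ fournit la d\'{e}composition $h(\mathbb{P}^{1})=1\oplus\mathbb{L}$, donc le morphisme unit\'{e} $k=R_{DR}(Spec(k))\rightarrow R_{DR}(\mathbb{P}^{1})$ est une injection scind\'{e}e qui identifie exactement $H^{0}$. Le facteur r\'{e}duit est alors $H^{2}_{DR}(\mathbb{P}^{1})=(k,1)$, plac\'{e} en degr\'{e} cohomologique $2$ ; autrement dit $R_{DR}(\tilde{M}_{gm}(\mathbb{P}^{1}))\cong (k,1)[-2]$.

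Enfin, je tiendrais compte du d\'{e}calage. La contravariance triangul\'{e}e donne la r\`{e}gle $R_{DR}(M[n])=R_{DR}(M)[-n]$, d'o\`{u} $R_{DR}(\mathbb{Z}(1))=R_{DR}(\tilde{M}_{gm}(\mathbb{P}^{1})[-2])=R_{DR}(\tilde{M}_{gm}(\mathbb{P}^{1}))[2]=(k,1)[-2][2]=(k,1)$. Ceci ram\`{e}ne la classe du degr\'{e} cohomologique $2$ au degr\'{e} $0$ sans modifier le degr\'{e} de Hodge, et fournit l'isomorphisme cherch\'{e} $\phi:R_{DR}(\mathbb{Z}(1))\stackrel{\sim}{\rightarrow}(k,1)$.

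La principale difficult\'{e} ne sera pas le calcul de l'espace vectoriel sous-jacent, qui est imm\'{e}diat, mais le contr\^{o}le de la bifiltration. Il faudra v\'{e}rifier que l'isomorphisme respecte \`{a} la fois la filtration de Hodge et la filtration par le poids \`{a} travers le foncteur $Dec$ (d\'{e}calage de Deligne), et que la scission issue du point rationnel est compatible \`{a} la structure additive et triangul\'{e}e du foncteur de r\'{e}alisation, de fa\c{c}on que la projection sur le facteur r\'{e}duit soit bien un morphisme de complexes bifiltr\'{e}s et non seulement une identification des groupes de cohomologie.
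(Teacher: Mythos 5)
Votre preuve est correcte et suit essentiellement la m\^{e}me d\'{e}marche que le texte : calculer l'image du motif r\'{e}duit de $\mathbb{P}^{1}$ (soit $k$ en degr\'{e} de Hodge 1, plac\'{e} en degr\'{e} cohomologique 2), puis utiliser le d\'{e}calage de la d\'{e}finition $\mathbb{Z}(1)=\tilde{M}_{gm}(\mathbb{P}^{1})[-2]$ pour ramener cette classe en degr\'{e} 0. Vous explicitez simplement (suite spectrale d'hypercohomologie, scission par le point rationnel, convention de d\'{e}calage pour un foncteur contravariant, contr\^{o}le de la bifiltration) ce que le texte r\'{e}sume en une phrase avant l'\'{e}nonc\'{e} du lemme.
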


Comme (k,1) est inversible dans $D^{+}(k_{bifilt})$, $R_{DR}$ s'\'{e}tend \`{a} $DM_{gm}(k)$, la cat\'{e}gorie des motifs mixtes g\'{e}om\'{e}triques, en un foncteur triangul\'{e} tensoriel.
$\\$$\\$$\\$$\\$$\\$$\\$$\\$
\textbf{Conclusion}: Nous avons donc construit le foncteur de r\'{e}alisation de de Rham avec toutes les propri\'{e}t\'{e}s voulues. Le point technique fondamental \'{e}tait de mettre explicitement des transferts sur le complexe de de Rham et une fois bien comprise cette machine, l'extension \`{a} $DM_{gm}(k)$ est essentiellement formelle. On notera \'{e}galement que cette approche directe \'{e}vite les soucis dûs aux structures enrichies sur la cohomologie, que l'on rencontre dans les approches de Lecomte-Wach et Cisisnki-D\'{e}glise.

Il ne reste plus qu'une remarque \`{a} faire. Le cas de la r\'{e}alisation rigide n'a pas \'{e}t\'{e} abord\'{e}e ici, mais une partie de l'approche \'{e}nonc\'{e}e ici peut être r\'{e}utilis\'{e}e. Pour obtenir les transferts pour la cohomologie rigide, il suffit de construire un morphisme de transfert pour une correspondance finie irr\'{e}ductible finie W au dessus de X affine lisse.
Dans le cas o\`{u} X est affine lisse, la cohomologie rigide se calcule comme la cohomologie de de Rham d'un 'compl\'{e}t\'{e} faible' d'un rel\`{e}vement de X.$\\$ De plus, on a par [22] que les morphismes finis surjectifs entre schémas affines lisses se relèvent en des morphismes finis surjectifs au niveau des complétés faibles.
Le probl\`{e}me vient alors que la correspondance finie W ne se rel\`{e}ve pas, n'\'{e}tant pas lisse. En revanche,  cela devient possible en utilisant de Jong.

Dans ce cas, on a un diagramme de la sorte:

$\xymatrix{\tilde{W}\ar[r]^{f}&W\ar[d]^{g}\\&X}$$\\$$\\$ avec f propre surjective et g\'{e}n\'{e}riquement \'{e}tale et $\tilde{W}$ est lisse. Dans ce cas, $g\circ f$ est  propre, surjectif et g\'{e}n\'{e}riquement fini entre sch\'{e}mas lisses. On peut donc appliquer ce qu'on a dit ci-dessus, il faut alors voir que c'est ind\'{e}pendant du choix de l'alt\'{e}ration, et que le transfert s'\'{e}tend.
Une fois les transferts mis, le reste  est essentiellement formel.
$\\$$\\$$\\$$\\$$\\$$\\$$\\$$\\$$\\$$\\$$\\$$\\$$\\$$\\$$\\$$\\$$\\$$\\$$\\$$\\$$\\$$\\$$\\$$\\$$\\$$\\$$\\$$\\$$\\$
\textbf{Bibliographie}:$\\$
[1] Y. Andr\'{e}, \textsl{Une introduction aux motifs (motifs purs,motifs mixtes et p\'{e}riodes)}, Panoramas et Synth\`{e}ses, SMF, 2004.$\\$$\\$
[2] M. Artin, \textsl{Grothendieck Topologies}, Seminar Notes. Harvard Univ. Dept.,Spring 1962.$\\$$\\$
[3] P. Balmer, M. Schlichting, \textsl{Idempotent completion of triangulated categories},
J. Algebra 236 (2001), no. 2, p. 819-834.$\\$$\\$
[4] A. Beilinson, V. Vologodsky, \textsl{A DG-guide to Voevodsky's motives}, preprint Arxiv ~(2008).$\\$$\\$
[5] D.C. Cisinski, F. D\'{e}glise, \textsl{Mixed Weil cohomologies}, Preprint (2007). $\\$$\\$
[5 bis] D.C.Cisinski, F.D\'{e}glise, Local and stable homological algebra in Grothendieck  
categories. Homotopy, homolgy, appplications. Volume 11, No. 1, pp. 219-260 (2009).$\\$$\\$
[6] P. Deligne,\textsl{ Equations diff\'{e}rentielles \`{a} points singuliers r\'{e}guliers}. Lecture Notes in Math. $\textbf{163}$
(Springer-Verlag 1970).$\\$$\\$
[7] P. Deligne,\textsl{ Th\'{e}orie de Hodge II}. Publ. Math. IHES 40
(1972), 5-57.$\\$$\\$
[8] P. Deligne, \textsl{Cat\'{e}gories tannakiennes}. in Grothendieck Festschrift vol II. Progress in Math. $\textbf{87}$
Birkhäuser Boston (1990) pp. 111-195.$\\$$\\$
[9] P. Deligne, J.S. Milne, A. Ogus, K.Y. Shih. \textsl{Hodge cycles, motives and Shimura varieties}.
Lecture Notes in Math. $\textbf{900}$ (Springer-Verlag 1982).$\\$$\\$
[10] P. Deligne, A. Goncharov, \textsl{Groupes fondamentaux motiviques de Tate mixtes},~ Ann Sci Ens $\textbf{38}$ 1, (2005) p.1-56.$\\$$\\$
[10 1/2] S. Eilenberg, J.A. Zilber, On products of complexes, Amer. J. Math 75
(1953), p. 200-204.$\\$$\\$
[11] E. M. Friedlander and V. Voevodsky, Bivariant Cycle Cohomology, in [34],
pp. 138-187.$\\$$\\$
[12] W. Fulton,\textsl{ Intersection theory}, Ergebnisse der Mathematik und ihrer Grenzgebiete.
3. Folge. A Series of Modern Surveys in Mathematics, vol. 2, Springer-
Verlag, Berlin, 1998.$\\$$\\$
[13] H. Hironaka,  \textsl{Resolution of singularities of an algebraic variety over a field of characteristic
zero. I, II }, Ann. of Math. (1) \textbf{79} (1964), 109-203 ; ibid. (2) \textbf{79} (1964), p. 205-326.$\\$$\\$
[14] A. Huber, \textsl{Mixed motives and their realization in derived categories}, Lecture Notes in
Mathematics, vol. \textbf{1604}, Springer-Verlag, Berlin, 1995.$\\$$\\$
[15] F. Ivorra,\textsl{ R\'{e}alisation l-adique des motifs mixtes}, Th\`{e}se de doctorat de
l'Universit\'{e} Paris 6 (2005).$\\$$\\$
[15 bis]  F. Ivorra,\textsl{ R\'{e}alisation l-adique des motifs triangul\'{e}s g\'{e}om\'{e}triques} I, Doc Math (2007). $\\$$\\$
[16] U. Janssen, \textsl{Motives, numerical equivalence, and semi-simplicity}. Invent. Math. \textbf{107}: 447-452, 1992.$\\$$\\$$\\$
[17] M. Kashiwara, P. Schapira, \textsl{Categories and sheaves}, Grundlehren der
Math. Wiss. \textbf{332} Springer-Verlag, (2005).$\\$$\\$
[18] H. Kawanoue, \textsl{Toward resolution of singularities over a field of positive characteristic Part I. Foundation of the program: the language of the idealistic filtration}, Arxiv. $\\$$\\$
[19] H. Kawanoue, K. Matsuki,\textsl{ Toward resolution of singularities over a field of positive characteristic (The Idealistic Filtration Program) Part II. Basic invariants associated to the idealistic filtration and their properties }, Arxiv.$\\$$\\$
[19bis] F. Lecomte, N. Wach, Le complexe motivique de de Rham,  Arxiv.$\\$$\\$
[20] M. Levine,\textsl{ Mixed motives}, American Mathematical Society, Providence, RI, 1998.$\\$$\\$
[20bis] D. Lieberman, Numerical equivalence and homological equivalence of algebraic cycles on Hodge manifolds, Amer.J.Math.\textbf{90} (1968), p53-66$\\$$\\$
[21] C. Mazza, V. Voevodsky, C. Weibel, \textsl{Lecture notes on motivic cohomology},
Clay Mathematics Monographs, vol. \textbf{2}, American Mathematical Society
(2006).$\\$$\\$
[22] P. Monsky and G. Washnitzer, Formal Cohomology I
The Annals of Mathematics, Second Series, Vol. \textbf{88}, No. 2 (Sep., 1968), pp. 181-217.$\\$$\\$
[23] Ye. A. Nisnevich, The completely decomposed topology on schemes and associated descent
spectral sequences in algebraic K-theory, Algebraic K-theory: connections with
geometry and topology (Lake Louise, AB, 1987), Kluwer Acad. Publ., Dordrecht, 1989,
pp. 241-342.$\\$$\\$
[24] N. Saavedra Rivano, Cat\'{e}gories tannakiennes, Lecture Notes in Mathematics
\textbf{265}, Springer-Verlag, Berlin, 1972.$\\$$\\$
[25] J-P. Serre  Algèbre locale, Multiplicités, Cours au Collège de France, 1957-1958, redigé
par Pierre Gabriel. Seconde édition, 1965. Lecture Notes in Mathematics, vol. \textbf{11}, Springer-
Verlag, Berlin, 1965.$\\$$\\$
[26] J-P. Serre. Corps locaux. Publications de l'Institut de
Math\'{e}matique de l'Universit\'{e} de Nancago, VIII. Hermann, Paris, 1962.$\\$$\\$
[27] J-P. Serre, Propri\'{e}t\'{e}s conjecturales des groupes de Galois motiviques et des repr\'{e}sentations l-adiques, Motives (Seattle, WA, 1991), Proc. Sympos. Pure Math.,
vol. \textbf{55}, Amer. Math. Soc., Providence, RI, 1994, Part 1. $\\$$\\$
[28] Th\'{e}orie des topos et cohomologie \'{e}tale des schemas I, II, III, Springer-Verlag, Berlin, 1972-
1973, S\'{e}minaire de G\'{e}om\'{e}trie Alg\`{e}brique du Bois-Marie 1963-1964 (SGA 4), Dirig\'{e}
par M. Artin, A. Grothendieck, et J.-L. Verdier. Avec la collaboration de N. Bourbaki,
P. Deligne et B. Saint-Donat, Lecture Notes in Mathematics, Vol. \textbf{269}, \textbf{270}, \textbf{305}.$\\$$\\$
[29] J-L. Verdier, Des cat\'{e}gories d\'{e}riv\'{e}es des cat\'{e}gories ab\'{e}liennes,
Asterisque-Soc. Math. France \textbf{239} (1996).$\\$$\\$
[30] J.L. Verdier, Cat\'{e}gories triangul\'{e}es, \'{e}tat 0. Cohomologie \'{e}tale. S\'{e}minaire de G\'{e}om\'{e}trie
algébrique du Bois-Marie SGA 4 1/2.$\\$$\\$
[31] P. Deligne, SGA 4 1/2, avec la collaboration de J. F. Boutot, A.
Grothendieck, L. Illusie et J-L. Verdier. pp. 262-311. Lecture Notes in Mathematics, \textbf{569}.
Springer-Verlag, Berlin-New York, 1977.$\\$$\\$
[32] V. Voevodsky, Triangulated categories of motives over a field, in Cycles,
transfers, and motivic homology theories, Annals of Mathematics Studies,
vol. \textbf{143}, Princeton University Press, Princeton, NJ, 2000.$\\$$\\$
[33] V. Voevodsky, Homology of schemes, Selecta Math. (N.S.) 2 (1996), no. \textbf{1},
p. 111-153.$\\$$\\$
[34] V. Voevodsky, A. Suslin and E. M. Friedlander, Cycles, transfers, and motivic homology
theories, Annals of Mathematics Studies, vol. \textbf{143}, Princeton University Press, 2000.

\end{document}